\newcommand{\BU}[1]{\boldsymbol{\mathrm{#1}}}
\newcommand{\U}[1]{\mathrm{#1}}
\newcommand{\B}[1]{\boldsymbol{#1}}
\DeclareMathOperator*{\var}{var}
\DeclareMathOperator*{\detcov}{detCov}
\DeclareMathOperator*{\cov}{Cov}
\DeclareMathOperator*{\vol}{Vol}
\theoremstyle{definition}
\newtheorem{thm}{Theorem}[section]
\newtheorem{prop}[thm]{Proposition}
\newtheorem{dfn}{Definition}[section]
\newtheorem{remark}{Remark}[section]
\newtheorem{cor}{Corollary}[section]
\newtheorem{lem}[thm]{Lemma}
\begin{document}

\title[Probability tails of local times]{On the probability distribution of the local times of diagonally operator-self-similar Gaussian fields with stationary increments}

\author[K. Kalbasi]{Kamran Kalbasi}
\thanks{Institute of Mathematics, EPFL (Swiss Federal Institute of Technology Lausanne)}

\author[T. Mountford]{Thomas S. Mountford}

\address{Institute of Mathematics, EPFL (Swiss Federal Institute of Technology Lausanne)}

\keywords{
local times,
probability tail decay,
{G}aussian fields,
operator-self-similar random fields,
fractional {B}rownian fields}

\begin{abstract}
In this paper we study the local times of vector-valued Gaussian fields that are `diagonally operator-self-similar' and whose increments are stationary. Denoting the local time of such a Gaussian field around the spatial origin and over the temporal unit hypercube by $Z$, we show that there exists $\lambda\in(0,1)$ such that under some quite weak conditions, $\lim_{n\rightarrow +\infty}\frac{\sqrt[n]{\mathbb{E}(Z^n)}}{n^\lambda}$ and
$\lim_{x\rightarrow +\infty}\frac{-\log \mathbb{P}(Z>x)}{x^{\frac{1}{\lambda}}}$ both exist and are strictly positive (possibly $+\infty$).
Moreover, we show that if the underlying Gaussian field is `strongly locally nondeterministic', the above limits will be finite as well. These results are then applied to establish similar statements for the intersection local times of diagonally operator-self-similar Gaussian fields with stationary increments.
\end{abstract}

\maketitle

%
%
%
%
%
%

\section{Introduction}
Let $(\Omega, \mathcal{F}, \mathbb{P})$ be a probability space, and
$\B{X}_{\B{t}}:=\bigl(X^1_{\B{t}}, \cdots, X^d_{\B{t}}\bigr)$, $\B{t}\in\mathbb{R}^N$ be an $N$-parameter $d$-dimensional centered Gaussian field on $(\Omega, \mathcal{F}, \mathbb{P})$, i.e. each component $X^i_{\B{t}}$ is a real-valued zero-mean Gaussian field indexed by $\mathbb{R}^N$. We call such a random field a centered Gaussian $(N,d)$-field.

We denote by $\mathbb{R}_+$,  $\mathbb{N}$ and $\mathbb{Q}$ respectively the sets of strictly positive real numbers ($>0$), strictly positive integers ($\geq 1$), and finally the rational numbers. Evidently $\mathbb{R}_{\geq0}$ denotes the real numbers that are positive or zero. We denote the space of matrices of size $m\times n$ with real entries by $\mathbb{R}^{n\times m}$.
For any two same-sized vectors $\B{u}=(u_1, \cdots, u_n)$ and $\B{v}=(v_1, \cdots, v_n)$ in $\mathbb{R}^n$, $\B{u}\circ\B{v}$ denotes their Schur product, i.e.,  the vector $\B{u}\circ\B{v}:=(u_1v_1, \cdots, u_nv_n)$. For any square matrix $\BU{Y}$, we denote its trace (i.e. the sum of all its diagonal entries) by $\textrm{tr}(\BU{Y})$. For any matrix $\BU{Y}$, we denote its transpose by $\BU{Y}^\dagger$. For any matrices $\BU{A_1}$, $\BU{A_2}$, ..., $\BU{A_n}$, we define $\textrm{diag}(\BU{A_1}, \BU{A_2}, \cdots, \BU{A_n})$ as the block diagonal matrix that has matrices $\BU{A_1}$, $\BU{A_2}$, ..., $\BU{A_n}$ on its diagonal (respecting the order) and is zero elsewhere.

For any $\B{p}\in\mathbb{R}^N$ and $\B{T}\in\mathbb{R}_+^N$, let $\mathcal{C}({\B{p}},\B{T})$ denote $\B{p}+\prod_{i=1}^N[0,T_i]$, i.e., the $N$-dimensional cube of side lengths equal to $\{T_i\}_{i=1}^N$ and based at point $\B{p}$. We also denote $[0,1]$ by $\mathcal{I}$.

For any measurable subset $\mathcal{B}\subset\mathbb{R}^d$, we denote its Lebesgue measure by $\vol(\mathcal{B})$. For any subset $\mathcal{A}$ of an arbitrary set $\mathcal{X}$, we denote its indicator function by  $\mathbf{1}_{\{\mathcal{A}\}}$, i.e.
$$
\mathbf{1}_{\{\mathcal{A}\}}(x):=
\begin{cases}
1 \;,\quad \text{for} \;x\in \mathcal{A}\\
0 \;,\quad \text{for} \;x\not\in \mathcal{A}.
\end{cases}
$$
For any $k$-dimensional Gaussian random vector $\B{Y}=(Y_1, \cdots, Y_k)$, we denote the determinant of its covariance matrix  by $\detcov[\B{Y}]$; in other words $$\detcov[\B{Y}]:=\det \bigl[\mathbb{E} \bigl( \B{Y}\, \B{Y}^\dagger\bigr)-\mathbb{E}(\B{Y})\mathbb{E}(\B{Y}^\dagger)\bigr],$$ where $\B{Y}$ is regarded as a $k\times 1$ matrix. For any finite family of vectors $\B{y}_i=(y_1^i,\cdots,y_k^i)$, $i=1,\cdots,n$, we call the following vector as their adjoined vector:
$$
[y_1^1,\cdots,y_k^1, y_1^2,\cdots,y_k^2, \cdots, y_1^n,\cdots,y_k^n],
$$
and we denote it by $[\B{y}_1, \cdots, \B{y}_n]$.

Once a centered Gaussian $(N,d)$-field $\B{X}$ is fixed, for any positive integer $k$ and any $\B{t}_1, \cdots, \B{t}_k\in \mathbb{R}^N$ we define
\begin{equation}\label{K_n Definition}
\U{K}_n^{\B{X}}(\B{t}_1, \cdots, \B{t}_n):= (2\pi)^{-\frac{nd}{2}}\bigl(\det \cov \bigl[\B{X}_{\B{t}_1}, \cdots, \B{X}_{\B{t}_n}]\bigr)^{-\frac{1}{2}}.
\end{equation}

We use the following definition of local times which provides a pointwise characterisation. The more common definition of local times as the Radon-Nikodym derivative of the occupation measure of a random process (if it is absolutely continuous), only provides an almost-sure characterisation of the occupation density. In section \ref{Two different definitions of local times}, we will see more on this and the link between the two definitions.


\begin{dfn}
Let $\{\B{X}_{\B{t}}\}_{\B{t}}$ be a random field on $\mathbb{R}^N$ with values in $\mathbb{R}^d$. We define the local time of $\B{X}$ at $\B{x}\in\mathbb{R}^d$ and over the cube $\mathcal{C}(\B{p},\B{T})$  as the following limit (if it exists)
$$
L_{\B{x}}(\B{X};\mathcal{C}(\B{p},\B{T})):=
\lim_{\varepsilon\rightarrow 0}
\int_{\mathcal{C}(\B{p},\B{T})}\frac{1}{\vol(\mathcal{B}_\varepsilon(\B{x}))}
\mathbf{1}_{\{\|\B{X}_{\B{t}}-\B{x}\|<\varepsilon\}}(\B{t})\mathrm{d}\B{t},
$$
where $\|\cdot\|$ is an arbitrary norm on $\mathbb{R}^d$, and $\mathcal{B}_\varepsilon(\B{x}):=\{\B{y}\in\mathbb{R}^d ; \|\B{x}-\B{y}\|<\varepsilon\}$.
\end{dfn}


We are interested in the tail-decay behavior of the probability distribution of $L_{\B{0}}(\B{X};\mathcal{I}^N)$.
The first work in this direction goes back to \cite{KasahKonoOgawa99}. They consider a one-parameter one-dimensional ($N=d=1$) Gaussian process $X(t)$ with stationary increments satisfying the local nondeterminism condition \cite{Xiao08}. Moreover, defining $\sigma^2(t)=\mathbb{E}\bigl[(X_t-X_s)^2\bigr]$, they assume that $\sigma(t)$ is continuous and strictly increasing on the interval $[0,1]$, that $\frac{1}{\sigma(t)}$ is integrable over $\mathcal{I}=[0,1]$, and finally $\sigma(t)$ varies regularly at $0$ with some exponent $0<H<1$, i.e., $\lim_{t\rightarrow0}\frac{\sigma(\omega t)}{\sigma(t)}=\omega^H$ for every $\omega>0$. In fact this latter condition is a gauge for asymptotic self-similarity near the origin. Under these conditions they show that the local times of $X(\cdot)$ exist, and moreover
$$
0< \liminf_{x\rightarrow +\infty}\frac{-\log\mathbb{P}[L_{0}(X,\mathcal{I})>x]}{\sigma^{-1}(\frac{1}{x})}\leq \limsup_{x\rightarrow +\infty}\frac{-\log\mathbb{P}[L_{0}(X,\mathcal{I})>x]}{\sigma^{-1}(\frac{1}{x})} <+\infty.
$$
When $\sigma(t)=t^H$, which corresponds to the fractional Brownian motion of Hurst parameter $H$, the exponential decay rate $\sigma^{-1}(\frac{1}{x})$ equals $x^{\frac{1}{H}}$.

More recently, \cite{ChenLiRosinskyShao11} considers the one-parameter d-dimensional fractional Brownian motion $\B{B}^H(t)=(B^H_1(t), \cdots,B^H_d(t))$ and also d-dimensional fractional Riemann-Liouville process $\B{W}^H(t)=(W^H_1(t), \cdots,W^H_d(t))$ where $\{B^H_i\}_{i=1}^d$ ($\{W^H_i\}_{i=1}^d$) are $d$ independent copies of a fractional Brownian motion (fractional Riemann-Liouville process) with Hurst parameter $H$. They show that the following limits exist
$$
\lim_{x\rightarrow +\infty}
x^{-\frac{1}{dH}}\log\mathbb{P}[L_{\B{0}}(\B{B}^H,\mathcal{I})>x]
\qquad \text{and}\qquad
\lim_{x\rightarrow +\infty}
x^{-\frac{1}{dH}}\log\mathbb{P}[L_{\B{0}}(\B{W}^H,\mathcal{I})>x].
$$

We will prove the existence of this exponential tail-decay limit for the class of Gaussian fields that have stationary increments (Property $\mathfrak{A}_2$ below) and are `diagonally self-similar' as defined in Property $\mathfrak{A}_3$ below.

Throughout the paper we assume that the random field $\B{X}$ has both of the following two properties ($\mathfrak{A}_0$ and $\mathfrak{A}_1$).

\noindent
\textbf{Property $\mathfrak{A}_0$: }
There exists a positive constant $c_0>0$ such that $\var(X_{\B{t}}^i)\leq c_0$ for every $\B{t}\in[0,1]^N$ and $i=1,\cdots,d$.\\
As we do not assume any kind of continuity of $\B{X}$ or its covariance matrix, the boundedness of its variance (Property $\mathfrak{A}_0$), seems inevitable.

\noindent
\textbf{Property $\mathfrak{A}_1$: }
The (N,d)-Gaussian field $\B{X}$ has the property that for any positive integer $n$,
the expression 
$\U{K}_n^{\B{X}}(\B{t}_1, \cdots, \B{t}_n)$
is integrable over $\bigl(\mathcal{I}^N\bigr)^{n}$.\\
Property $\mathfrak{A}_1$ guarantees the existence of the local times at every point, i.e. $L_{\B{x}}(\B{X};\mathcal{I}^N)$, and the finiteness of all their moments, see Proposition \ref{existence and approximation}. In fact, $\mathfrak{A}_1$ is the weakest-known sufficient condition for the existence of local time at the origin and the finiteness of all its moments.

Next we have the following two properties that form our main framework.

\noindent
\textbf{Property $\mathfrak{A}_2$(Stationary Increments): }The random field $\B{X}$ is zero at the origin and has stationary increments, i.e., for any $\B{p}\in\mathbb{R}^N$ we have the following equality for every $\B{s}, \B{t}\in\mathbb{R}^N$ and $i,j\in\{1, \cdots, d\}$
$$
\mathbb{E}\bigl[X^{(i)}_{\B{s}}X^{(j)}_{\B{t}}\bigr]
=\mathbb{E}\bigl[(X^{(i)}_{\B{s+p}}-
X^{(i)}_{\B{p}})(X^{(j)}_{\B{t+p}}-X^{(j)}_{\B{p}})\bigr].
$$
%


\noindent
\textbf{Property $\mathfrak{A}_3$(Diagonal Self-Similarity): }
There exist a vector $\B{\alpha}=(\alpha_1, \cdots, \alpha_N)\in \mathbb{R}_+^N$ and a matrix $\BU{H}\in\mathbb{R}^{d\times d}$ with positive trace ($\textrm{tr}(\BU{H})>0$)
such that for every $\omega>0$ we have
\begin{equation}\label{diagonal self-similarity equality in distribution}
\B{X}_{\B{t}\circ\omega^{\B{\alpha}}}
\overset{d}{=}\omega^{\BU{H}} \B{X}_{\B{t}}
\;;\quad \forall \omega\in\mathbb{R}_+,
\end{equation}
where $\omega^{\B{\alpha}}:=(\omega^{\alpha_1}, \cdots, \omega^{\alpha_1})$, the values of $\B{X}_{\B{t}}$ are considered as $d\times 1$ matrices, $\overset{d}{=}$ means equality in finite dimensional distributions for the two random fields, and
$\omega^{\BU{H}}$ denotes matrix exponential with the usual definition, i.e.
$$
\omega^{\BU{H}}:=e^{\ln(\omega)\,\BU{H}}
=\sum_{i=0}^{\infty}\frac{\bigl(\ln(\omega) \,\BU{H}\bigr)^n}{n!}
\;;\quad \forall \omega\in \mathbb{R}_+.
$$

\begin{remark}
This definition is a special case of the more general concept of what is called operator-self-similar random fields, e.g. studied in \cite{LiXiao2011}. In the general case, the vectors $\B{\alpha}$ and hence $\omega^{\B{\alpha}}$ are replaced by a matrix $\BU{E}$ and its matrix exponential $\omega^{\BU{E}}$, respectively. Evidently in this more general setting, the Shur product $\omega^{\B{\alpha}}\circ \B{t}$ should be replaced by the usual matrix multiplication $\omega^{\BU{E}} \B{t}$. This justifies us calling Property $\mathfrak{A}_3$ as `diagonal' self-similarity.
\end{remark}



\begin{remark}\label{diagonal self-similarity covariance formulation}
For zero-mean Gaussian fields, Equation \eqref{diagonal self-similarity equality in distribution} in Property $\mathfrak{A}_3$ is equivalent to the following equation
$$
\mathbb{E}\bigl(\BU{X}_{\B{s}\circ\omega^{\B{\alpha}}} \BU{X}_{\B{t}\circ\omega^{\B{\alpha}}}^\dagger
\bigr)= \omega^{\BU{H}} \, \mathbb{E}\bigl(\BU{X}_{\B{s}} \BU{X}_{\B{t}}^\dagger\bigr)\omega^{\BU{H}^\dagger},
\quad \forall \B{s}, \B{t}\in\mathbb{R}^N,
$$
where $\B{X}_{\B{t}}$ is considered as a $d\times 1$ matrix as above, and $\BU{H}^\dagger$ denotes the transpose of matrix $\BU{H}$. For more on matrix exponential see e.g. \cite[ch.2]{Hall2003}.
\end{remark}

An important special case of Property $\mathfrak{A}_3$ is the following condition.

\noindent
\textbf{Property $\mathfrak{A}_3^\circ$(Two-sided Diagonal Self-Similarity): }
There exist $\B{\alpha}=(\alpha_1, \cdots, \alpha_N)\in \mathbb{R}_+^N$ and $(H_1, \cdots, H_d)\in \mathbb{R}_+^d$ such that for every $\omega>0$ we have
\begin{equation}\label{diagonal self-similarity covarience formula}
\mathbb{E}\bigl[X^{(i)}_{\B{s}\circ\omega^{\B{\alpha}}}
X^{(j)}_{\B{t}\circ\omega^{\B{\alpha}}}\bigr]= \omega^{H_i+H_j} \, \mathbb{E}\bigl[X^{(i)}_{\B{s}}X^{(j)}_{\B{t}}\bigr],
\quad \forall i,j\in\{1,\cdots,d\} \;,\, \forall \B{s}, \B{t}\in\mathbb{R}^N,
\end{equation}
where $\omega^{\B{\alpha}}:=(\omega^{\alpha_1}, \cdots, \omega^{\alpha_1})$.

\begin{remark}
For a zero-mean Gaussian field $\B{X}_{\B{t}}$, Property $\mathfrak{A}_3^\circ$ is satisfied if and only if Property  $\mathfrak{A}_3$ is satisfied with $\BU{H}=\textrm{diag}(H_1, H_2, \cdots, H_d)$, i.e., the diagonal matrix whose diagonal entries are $\omega^{H_1}$,..., $\omega^{H_d}$ (respecting the order) and is zero elsewhere. This is true because we have
$$
\omega^{\textrm{diag}(H_1, H_2, \cdots, H_d)}=\textrm{diag}(\omega^{H_1},\cdots, \omega^{H_d}).
$$
\end{remark}

\begin{remark}
A very important random field that satisfies both Properties $\mathfrak{A}_2$ and $\mathfrak{A}_3^\circ$ is the multi-parameter fractional Brownian motion, i.e. the centered Gaussian field with stationary increments characterised by $\mathbb{E}[(X_{\B{s}}-X_{\B{t}})^2]=|\B{s-t}|^{2H}$ for every $\B{s}, \B{t}\in\mathbb{R}^N$, where $H\in(0,1]$ is the Hurst parameter of the Gaussian field. Furthermore, the centered Gaussian $(N,d)$-field consisting of $d$ independent multi-parameter fractional Brownian motions each with its own Hurst parameter $H_i$ satisfies also $\mathfrak{A}_2$ and $\mathfrak{A}_3$, hence falls in the scope of this paper as well.
\end{remark}

\begin{remark}
Let $c_1, \cdots, c_N\in \mathbb{R}_+$, $p_1, \cdots, p_N\in (0,2]$ and $H\in(0,1]$. Consider the $(N,1)$-Gaussian field that we call `anisotropic fractional Brownian motion', i.e., the $\mathbb{R}^N$-indexed centered Gaussian field $X_{\B{s}}$ with stationary increments given by
$$
\mathbb{E}[(X_{\B{s}}-X_{\B{t}})^2]=\phi(\B{s}-\B{t}),
$$
where
$$
\phi(\B{s})=\bigl(\Sigma_{i=1}^N c_i|s_i|^{p_i}\bigr)^{2H}.
$$
This Gaussian field satisfies both Properties $\mathfrak{A}_2$, and $\mathfrak{A}_3^\circ$ with $\tilde{\B{\alpha}}:=(\frac{1}{p_1}, \cdots, \frac{1}{p_N})$ and $H_1:=H$.

For other interesting examples of operator-self-similar random processes and fields with stationary increments, see e.g. \cite{MaejimaMason1994}.

In Section \ref{MainResults} we gather all the main results of this paper. In Section \ref{Two different definitions of local times}, we discuss the pointwise versus functional definitions of local times which are relevant to our work. In Section \ref{Formulation in moments growth rate} we state the relation between the exponential decay rate of the probability tail of local times and the exponential growth rate of their moments. Sections \ref{Section on Upper bounds} and \ref{Existence of the limit} contain the technical proofs.

\section{Main Results}\label{MainResults}
In this section we give some technical definitions and state our results. The proofs will come in the subsequent sections.

For every $\B{\alpha}=(\alpha_1, \alpha_2, \cdots, \alpha_N)\in \mathbb{R}_+^N$,
we define the $\B{\alpha}$-length as follows
\begin{equation}\label{alpha distance}
\|\B{t}\|_{\B{\alpha}}:=\sum_{i=1}^{N}|t_i|^{1/\alpha_i}\;:\quad \forall \B{t}=(t_1, \cdots, t_N)\in \mathbb{R}^N
\end{equation}
It is evident that $\|\B{t}\|_{\B{\alpha}}$ defines a translation invariant topology on $\mathbb{R}_+^N$. Moreover, if $\forall i=1, \cdots, N : \alpha_i\geq1$ then $\|\B{t}\|_{\B{\alpha}}$ defines a translation invariant metric on $\mathbb{R}_+^N$ which we call the $\B{\alpha}$-distance. Nevertheless, it is not a norm except for the special case where all the exponents are equal to $1$.

We introduce the following definition which generalizes the idea of Strong Local Nondeterminism to vector-valued Gaussian fields.

\begin{dfn}[Strong Local Nondeterminism]\label{local nondeterminism definition}
We call a centered Gaussian $(N,d)$-field $\B{X}_{\B{t}}$ strongly locally nondeterministic over a cube $\mathcal{J}\subseteq\mathbb{R}^N$ with scaling vector $\B{\xi}:=(\xi_1, \xi_2, \cdots, \xi_N)\in\mathbb{R}_+^N$ if there exist constants $H>0$ and $C>0$
such that for any positive integer $n$, and any arbitrary vectors $\B{u}, \B{t}_1, \cdots, \B{t}_n\in \mathcal{J}$, we have
$$
\detcov[\B{X}_{\B{u}}|\B{X}_{\B{t}_1}, \B{X}_{\B{t}_2},\cdots, \B{X}_{\B{t}_n}]\geq C \min_{0\leq i\leq n}
\|\B{u}-\B{t}_i\|_{\B{\alpha}}^{2H},
$$
where $\B{t}_0:=\B{0}$, the expression
$\detcov[\B{X}_{\B{u}}|\B{X}_{\B{t}_1}, \B{X}_{\B{t}_2},\cdots, \B{X}_{\B{t}_n}]$ denotes the determinant of the conditional covariance matrix of the random vector $\B{X}_{\B{u}}$ conditioned on all the random vectors $\B{X}_{\B{t}_1}, \B{X}_{\B{t}_2},\cdots, \B{X}_{\B{t}_n}$, and finally, $\B{\alpha}=(\alpha_1, \alpha_2, \cdots, \alpha_N):=H \B{\xi}$.
\end{dfn}

\begin{remark}
The reason why only the normalized vector $\B{\xi}=\frac{1}{H}\B{\alpha}$ is relevant, is due to the fact that for any $p>0$ there exist positive constants $c_1, c_2>0$ such that for every $\B{x}=(x_1, \cdots, x_N)\in \mathbb{R}^N$
$$
c_1(\sum_{i=1}^{N}|x_i|^{1/\alpha_i})^{2H}\leq
(\sum_{i=1}^{N}|x_i|^{p/{\alpha_i}})^{^\frac{2H}{p}}\leq c_2(\sum_{i=1}^{N}|x_i|^{1/\alpha_i})^{2H}.
$$
In fact we have the following proposition.
\end{remark}
\begin{prop}\label{Equivalence of self-similar functions}
Let $f:\mathbb{R}^N\rightarrow\mathbb{R}^{\geq 0}$ be a continuous function such that $f(\B{x})=0$ if and only if $\B{x}=\B{0}$, and for some vector $\B{\alpha}=(\alpha_1, \alpha_2, \cdots, \alpha_N)\in\mathbb{R}_+^N$ and $H>0$, we have $f(\B{x}\circ \omega^{\B{\alpha}})=\omega^H f(\B{x})$ for every $\B{x}\in \mathbb{R}^N$ and $\omega>0$. Then there exist constants $c_1, c_2>0$ such that for every $\B{x}=(x_1, \cdots, x_N)\in \mathbb{R}^N$
$$
c_1(\sum_{i=1}^{N}|x_i|^{1/\alpha_i})^{H}\leq
f(\B{x})\leq c_2(\sum_{i=1}^{N}|x_i|^{1/\alpha_i})^{H}.
$$
\end{prop}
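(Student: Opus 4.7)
The plan is to reduce everything to a compactness argument on a suitably chosen ``unit sphere'' for the self-similar scaling. First I would introduce the comparison function
$$
g(\B{x}):=\sum_{i=1}^{N}|x_i|^{1/\alpha_i},
$$
and verify by direct substitution that it satisfies the same type of scaling as $f$, namely $g(\B{x}\circ\omega^{\B{\alpha}})=\omega\, g(\B{x})$ for every $\omega>0$, so that $g(\B{x})^H$ obeys exactly the relation imposed on $f$. The function $g$ is continuous on $\mathbb{R}^N$ and vanishes only at the origin.

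Next I would consider the level set $S:=\{\B{x}\in\mathbb{R}^N : g(\B{x})=1\}$. This set is closed by continuity of $g$, and bounded because $g(\B{x})=1$ forces $|x_i|^{1/\alpha_i}\leq 1$, hence $|x_i|\leq 1$, for every coordinate. Thus $S$ is compact. Since $\B{0}\notin S$, the hypothesis on $f$ implies that $f$ is continuous and strictly positive on $S$, and consequently attains a positive minimum $c_1:=\min_{S}f>0$ and a finite maximum $c_2:=\max_{S}f<\infty$.

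The main step is then to normalize an arbitrary $\B{x}\neq \B{0}$ onto $S$ by the scaling. Choose $\omega:=g(\B{x})^{-1}>0$ and set $\B{y}:=\B{x}\circ \omega^{\B{\alpha}}$; then $g(\B{y})=\omega\,g(\B{x})=1$, so $\B{y}\in S$. Applying the self-similarity of $f$ with scaling factor $\omega^{-1}=g(\B{x})$ gives
$$
f(\B{x})=f\bigl(\B{y}\circ (\omega^{-1})^{\B{\alpha}}\bigr)=(\omega^{-1})^{H}f(\B{y})=g(\B{x})^{H}\, f(\B{y}),
$$
and the bounds $c_1\leq f(\B{y})\leq c_2$ immediately yield the desired double inequality for $\B{x}\neq\B{0}$. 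The case $\B{x}=\B{0}$ is trivial since both sides vanish.

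There is really no serious obstacle in this argument; the only point to double-check is that the compact set $S$ is nonempty and avoids the origin (which it plainly does since $g$ is $1$-homogeneous under $\omega^{\B{\alpha}}$-scaling and therefore every ray from the origin meets $S$ exactly once). The proposition thus follows cleanly from continuity of $f$, compactness of $S$, and the self-similar scaling used to transport bounds from $S$ to all of $\mathbb{R}^N$.
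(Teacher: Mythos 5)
Your proof is correct and follows essentially the same route as the paper: normalize an arbitrary nonzero $\B{x}$ onto the $\B{\alpha}$-unit sphere $\{\|\cdot\|_{\B{\alpha}}=1\}$ via the scaling and transport bounds on $f$ back using self-similarity. The only cosmetic difference is that the paper obtains the upper bound from continuity of $f$ near the origin (after normalizing $\alpha_i\geq 1$ to make $\|\cdot\|_{\B{\alpha}}$ a metric), whereas you take the maximum of $f$ on the compact level set, which handles both bounds uniformly and is equally valid.
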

\begin{proof}
In Section \ref{Section on Upper bounds}.
\end{proof}
\begin{remark}
Let $\B{X}_{\B{t}}$ be a diagonally self-similar centered Gaussian $(N,d)$-field that satisfies Property $\mathfrak{A}_3$ with matrix $\BU{H}\in \mathbb{R}^{d\times d}$ and vector $\B{\alpha}=(\alpha_1, \cdots, \alpha_N)\in \mathbb{R}_+^N$. If $\B{X}_{\B{t}}$ is strongly locally nondeterministic with the scaling vector $\B{\xi}:=(\xi_1, \xi_2, \cdots, \xi_N)$, then it is easy to verify that $\B{\xi}=\frac{1}{\textrm{tr}(\B{H})}\B{\alpha}$. In other words, for diagonally self-similar centered Gaussian $(N,d)$-fields, the strong local nondeterminism can be satisfied only with a unique scaling vector.
\end{remark}


\begin{prop}\label{parameters restrictions}
Let $\B{X}_{\B{t}}$ be a diagonally self-similar centered Gaussian $(N,d)$-field with stationary increments, i.e. it satisfies Properties $\mathfrak{A}_0$, $\mathfrak{A}_2$, and $\mathfrak{A}_3$ with some matrix $\BU{H}\in \mathbb{R}^{d\times d}$ and vector $(\alpha_1, \cdots, \alpha_N)\in \mathbb{R}_+^N$. Let $\beta$ be a positive real number. If the kernel
$\big(\U{K}_n^{\B{X}}\bigr)^{\beta}$ is integrable over the cube $\bigl(\mathcal{I}^N\bigr)^{n}$ for some integer $n$, then the following inequality has to hold true
$$
\sum_{i=1}^{N}\alpha_i > \beta \, \textrm{tr}(\BU{H}).
$$
\end{prop}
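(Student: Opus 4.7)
The plan is to exploit Property $\mathfrak{A}_3$ to derive a precise scaling law for the kernel $\U{K}_n^{\B{X}}$ under $\B{t}\mapsto\omega^{\B{\alpha}}\circ\B{t}$, then change variables inside the finite integral and compare the resulting enlarged domain with the unit cube. Stationary increments ($\mathfrak{A}_2$) will enter only via the fact that $\B{X}$ is centered.

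\textbf{Step 1 (kernel homogeneity).} Using the covariance form in Remark \ref{diagonal self-similarity covariance formulation}, the $nd\times nd$ matrix $\cov[\B{X}_{\omega^{\B{\alpha}}\circ\B{t}_1},\dots,\B{X}_{\omega^{\B{\alpha}}\circ\B{t}_n}]$ is obtained from $\cov[\B{X}_{\B{t}_1},\dots,\B{X}_{\B{t}_n}]$ by left-multiplication by the block diagonal matrix $\textrm{diag}(\omega^{\BU{H}},\dots,\omega^{\BU{H}})$ and right-multiplication by its transpose. Taking determinants and using $\det(\omega^{\BU{H}})=e^{\ln(\omega)\,\textrm{tr}(\BU{H})}=\omega^{\textrm{tr}(\BU{H})}$ yields
$$
\U{K}_n^{\B{X}}(\omega^{\B{\alpha}}\circ\B{t}_1,\dots,\omega^{\B{\alpha}}\circ\B{t}_n)
=\omega^{-n\,\textrm{tr}(\BU{H})}\,\U{K}_n^{\B{X}}(\B{t}_1,\dots,\B{t}_n),\qquad\forall\omega>0.
$$

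\textbf{Step 2 (rescaling the integral).} Set $M:=\int_{(\mathcal{I}^N)^n}(\U{K}_n^{\B{X}})^{\beta}(\B{t}_1,\dots,\B{t}_n)\,\mathrm{d}\B{t}_1\cdots\mathrm{d}\B{t}_n$, finite by hypothesis. The substitution $\B{t}_i=\omega^{\B{\alpha}}\circ\B{s}_i$ contributes a Jacobian factor $\omega^{n\sum_{i=1}^N\alpha_i}$, transforms the domain into $(\omega^{-\B{\alpha}}\circ\mathcal{I}^N)^n$, and combines with Step~1 to give
$$
M = \omega^{n(\sum_{i=1}^N\alpha_i-\beta\,\textrm{tr}(\BU{H}))}\int_{(\omega^{-\B{\alpha}}\circ\mathcal{I}^N)^n}(\U{K}_n^{\B{X}})^{\beta}(\B{s})\,\mathrm{d}\B{s},\qquad\forall\omega>0.
$$
For $\omega\in(0,1)$ each $\omega^{-\alpha_j}>1$, so $(\omega^{-\B{\alpha}}\circ\mathcal{I}^N)^n$ strictly contains $(\mathcal{I}^N)^n$ by a set of positive Lebesgue measure. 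Granted a.e.\ positivity of $(\U{K}_n^{\B{X}})^{\beta}$ on the enlarged cube and $M>0$, the right-hand integral strictly exceeds $M$, forcing $\omega^{n(\sum_{i=1}^N\alpha_i-\beta\,\textrm{tr}(\BU{H}))}<1$ for every $\omega\in(0,1)$; since $\omega<1$, this yields the desired strict inequality $\sum_{i=1}^N\alpha_i>\beta\,\textrm{tr}(\BU{H})$.

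\textbf{Main obstacle.} The only delicate point is the a.e.\ positivity of the integrand on the enlarged cube, which I would justify as follows. Property $\mathfrak{A}_0$ keeps all entries of $\cov[\B{X}_{\B{t}_1},\dots,\B{X}_{\B{t}_n}]$ uniformly bounded on $(\mathcal{I}^N)^n$, so $\U{K}_n^{\B{X}}>0$ wherever this covariance matrix is invertible. The assumed integrability of $(\U{K}_n^{\B{X}})^{\beta}$ on $(\mathcal{I}^N)^n$ forces that determinant to be strictly positive almost everywhere on $(\mathcal{I}^N)^n$, for otherwise $\U{K}_n^{\B{X}}\equiv+\infty$ on a set of positive measure, contradicting the finiteness of $M$. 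The homogeneity of Step~1 then transports this a.e.\ positivity to the whole of $([0,+\infty)^N)^n$ and simultaneously guarantees $M>0$, closing the argument.
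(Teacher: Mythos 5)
Your argument is correct, but it takes a different route from the paper. The paper works with anisotropic polar-type coordinates: it parametrizes the surface $\mathcal{S}_{\B{\alpha}}^+=\{\sum_i x_i^{1/\alpha_i}=1\}$, writes $\B{t}=\omega^{\B{\alpha}}\circ\B{\sigma}_{\B{s}}$, and bounds the integral of $(\U{K}_1)^{\beta}$ over $[0,1]^N$ from below by a product of a surface integral and the radial integral $\int_0^1\omega^{\sum_i\alpha_i-1-\beta\,\mathrm{tr}(\BU{H})}\,\mathrm{d}\omega$, whose divergence at the origin forces $\sum_i\alpha_i>\beta\,\mathrm{tr}(\BU{H})$; it carries this out for $n=1$ and asserts the general case is similar. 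You instead use the same kernel homogeneity $\U{K}_n(\omega^{\B{\alpha}}\circ\B{t}_1,\dots)=\omega^{-n\,\mathrm{tr}(\BU{H})}\U{K}_n(\B{t}_1,\dots)$ (which matches Remark \ref{diagonal self-similarity covariance formulation} and $\det(\omega^{\BU{H}})=\omega^{\mathrm{tr}(\BU{H})}$, exactly as in the paper) but then run a global dilation-comparison: the exact identity $M=\omega^{n(\sum_i\alpha_i-\beta\,\mathrm{tr}(\BU{H}))}\int_{(\omega^{-\B{\alpha}}\circ\mathcal{I}^N)^n}(\U{K}_n)^{\beta}$ together with strict growth of the right-hand integral under enlargement of the domain forces the exponent to be positive. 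This buys you a proof that treats all $n$ uniformly and avoids the surface parametrization and the Jacobian $J_\sigma$; the paper's polar computation, on the other hand, is reused later (in the proof of Lemma \ref{local nondeterminism theorem}). One remark: your ``main obstacle'' is not really an obstacle, since by definition $\U{K}_n=(2\pi)^{-nd/2}(\detcov)^{-1/2}$ takes values in $(0,+\infty]$ at every point (covariance entries of a Gaussian vector are finite), so the integrand is strictly positive pointwise, $M>0$ is automatic, and the finiteness of the integral over the enlarged cube follows directly from the scaling identity; the detour through $\mathfrak{A}_0$ and a.e.\ invertibility, while harmless and correct, is not needed.
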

\begin{proof}
In Section \ref{Section on Upper bounds}.
\end{proof}

\begin{lem}\label{local nondeterminism theorem}
Let $\B{X}_{\B{t}}$ be a centered Gaussian $(N,d)$-field which is strongly locally nondeterministic over $\mathcal{I}^N$ with
scaling vector $\B{\xi}=(\xi_1, \xi_2, \cdots, \xi_N)$ and constant $C_0$.
Then for any positive real number $\beta$ such that
$\beta <\sum_{i=1}^{N}\xi_i$
, and any positive integer $n$, the kernel $\big(\U{K}_n^{\B{X}}\bigr)^{\beta}$ is integrable over the cube $\bigl(\mathcal{I}^N\bigr)^n$, and
$$
\int_{(\mathcal{I}^N)^n}\big(\U{K}_n^{\B{X}}(\B{t}_1, \cdots, \B{t}_n)\bigr)^{\beta}\mathrm{d}\B{t}_1\cdots\mathrm{d}\B{t}_n\leq
c^n \, (n!)^{\frac{\beta}{\sum_{i=1}^{N}\xi_i}}
$$
where $c$ is a constant that depends only on $C_0$, $N$, $\B{\alpha}$, $\beta$, $H$ and $d$.
\end{lem}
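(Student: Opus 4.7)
The plan is to use strong local nondeterminism to control $(\U{K}_n^{\B{X}})^{\beta}$ pointwise by a product of terms in which the variables decouple for integration, and then to integrate one variable at a time, picking up a factor of order $k^{\beta/\sum_i\xi_i}$ at the $k$-th step.

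First, the Schur-complement factorisation of the covariance matrix of the Gaussian block vector $(\B{X}_{\B{t}_1},\ldots,\B{X}_{\B{t}_n})$ gives
$$\det\cov[\B{X}_{\B{t}_1},\ldots,\B{X}_{\B{t}_n}] \;=\; \prod_{k=1}^n\detcov[\B{X}_{\B{t}_k}\mid \B{X}_{\B{t}_1},\ldots,\B{X}_{\B{t}_{k-1}}],$$
where the $k=1$ factor is interpreted as $\det\var(\B{X}_{\B{t}_1})$. Applying strong local nondeterminism to each conditional determinant on the right bounds it from below by $C_0\min_{0\leq j\leq k-1}\|\B{t}_k-\B{t}_j\|_{\B{\alpha}}^{2H}$ with $\B{t}_0:=\B{0}$. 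For $k=1$ a small piece of care is needed, since the SLND definition requires at least one conditioning point; I will handle this by comparing $\det\var(\B{X}_{\B{t}_1})$ with $\det\cov[\B{X}_{\B{t}_1}\mid\B{X}_{\B{0}}]$ (which is dominated in Loewner order, hence in determinant) and applying the SLND to the latter. Taking the $\beta$-th power of the reciprocal and using the definition of $\U{K}_n^{\B{X}}$ yields
$$\bigl(\U{K}_n^{\B{X}}(\B{t}_1,\ldots,\B{t}_n)\bigr)^{\beta} \;\leq\; c^n \prod_{k=1}^n\Bigl(\min_{0\leq j\leq k-1}\|\B{t}_k-\B{t}_j\|_{\B{\alpha}}\Bigr)^{-\beta H}.$$

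In this upper bound the variable $\B{t}_k$ appears only in factors with index $\geq k$, so I integrate successively over $\B{t}_n,\B{t}_{n-1},\ldots,\B{t}_1$. The central estimate I must establish is that, uniformly in the positions of $\B{t}_0,\ldots,\B{t}_{k-1}\in\mathcal{I}^N$,
$$\int_{\mathcal{I}^N}\Bigl(\min_{0\leq j\leq k-1}\|\B{t}_k-\B{t}_j\|_{\B{\alpha}}\Bigr)^{-\beta H}d\B{t}_k \;\leq\; c'\,k^{\beta/\sum_i\xi_i}.$$
I would prove this via a layer-cake representation combined with the volume computation $\vol\{\B{t}\in\mathbb{R}^N:\|\B{t}-\B{p}\|_{\B{\alpha}}<a\}=\kappa\,a^{\sum_i\alpha_i}$, which is immediate from the substitution $u_i=v_i^{\alpha_i}$ applied to the simplex $\{\sum_i|u_i|<a\}$. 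A union bound then gives $\vol\{\B{t}_k\in\mathcal{I}^N:\min_j\|\B{t}_k-\B{t}_j\|_{\B{\alpha}}<a\}\leq k\kappa\,a^{\sum_i\alpha_i}$, and combining with the trivial bound by $1$ and splitting the layer-cake integral at the crossover point $a_0:=(k\kappa)^{-1/\sum_i\alpha_i}$, both resulting pieces evaluate to a constant times $k^{\beta H/\sum_i\alpha_i}$, which equals $k^{\beta/\sum_i\xi_i}$ since $\B{\alpha}=H\B{\xi}$.

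Iterating this estimate from $k=n$ down to $k=1$ produces the claimed bound $(cc')^n(n!)^{\beta/\sum_i\xi_i}$ (in particular showing integrability). The main technical point is the layer-cake computation: its convergence near $a=0$ is precisely the place where the strict hypothesis $\beta<\sum_i\xi_i$ is used, as this is equivalent to $\sum_i\alpha_i>\beta H$, while the tail piece converges automatically. Everything else (Schur complement, Loewner monotonicity of the determinant, union bound over balls) is standard.
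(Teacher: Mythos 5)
Your argument is correct, and it reaches the stated bound by a genuinely different (and shorter) route than the paper. The paper first symmetrizes: using permutation invariance of $\U{K}_n^{\B{X}}$ it pays a factor $n!$ to restrict to ``narrowing'' sequences, for which the SLND minimum collapses to $\min\{\|\B{t}_k-\B{t}_{k-1}\|_{\B{\alpha}},\|\B{t}_k\|_{\B{\alpha}}\}$; it then invokes a worst-case nearest-neighbour-tour length bound (Tassiulas) to insert damping factors $\exp\bigl(-k^{1/\sum_i\alpha_i}\min\{\cdot\}\bigr)$ at cost $e^{O(n)}$, decouples the integrals by a change of variables, and each one-variable integral contributes $k^{\beta/\sum_i\xi_i-1}$, the product of the $1/k$'s cancelling the $n!$ from symmetrization. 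You instead keep the natural ordering, use the conditional-variance factorization of $\det\cov$ together with SLND in its full strength (minimum over \emph{all} previously fixed points and the origin), and integrate out $\B{t}_n,\ldots,\B{t}_1$ one at a time via the uniform layer-cake/union-bound estimate $\int_{\mathcal{I}^N}\bigl(\min_{0\leq j\leq k-1}\|\B{t}-\B{t}_j\|_{\B{\alpha}}\bigr)^{-\beta H}\mathrm{d}\B{t}\leq c'\,k^{\beta H/\sum_i\alpha_i}$, whose proof only needs the $\B{\alpha}$-ball volume scaling $\kappa a^{\sum_i\alpha_i}$; the product over $k$ directly gives $(c')^n(n!)^{\beta/\sum_i\xi_i}$. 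Both arguments use the strict hypothesis $\beta<\sum_i\xi_i$ at the same point (local integrability of $\|\cdot\|_{\B{\alpha}}^{-\beta H}$, equivalently $\sum_i\alpha_i>\beta H$), and both constants have the stated dependence. What your route buys is economy: no symmetrization, no nearest-neighbour-tour theorem, and a bound that is uniform in the configuration of earlier points by a trivial union bound; it is the classical scheme in the local-time literature. What the paper's route buys is essentially the same estimate, with the combinatorial geometry (tour length linear in $n^{1-1/\sum\alpha_i}$) made explicit, which some readers may find illuminating but which is not needed for this lemma. One small merit of your write-up: you treat the $k=1$ factor explicitly by dominating $\det\var(\B{X}_{\B{t}_1})$ by $\detcov[\B{X}_{\B{t}_1}\mid\B{X}_{\B{0}}]$ via Loewner monotonicity of the determinant, a detail the paper passes over silently; this is valid since $\B{0}\in\mathcal{I}^N$ and conditioning decreases the covariance in the Loewner order.
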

\begin{proof}
In Section \ref{Section on Upper bounds}.
\end{proof}

\begin{thm}
Let $\B{X}_{\B{t}}$ be a centered Gaussian $(N,d)$-field that is strongly locally nondeterministic over $\mathcal{I}^N$ with
scaling  vector $\B{\xi}=(\xi_1, \xi_2, \cdots, \xi_N)\in \mathbb{R}_+^N$ such that $1<\sum_{i=1}^{N}\xi_i$. Then
the local times $Z_{\B{x}}:=L_{\B{x}}(\B{X},\mathcal{I}^N)$ of the random field $\B{X}_{\B{t}}$ exist at every point $\B{x}\in \mathbb{R}^d$, and
$$
\mathbb{E}(Z_{\B{x}}^n)\leq c^n \, (n!)^{\frac{ 1}{\sum_{i=1}^{N}\xi_i}},
$$
for some constant $c$ which does not depend on $n$.
\end{thm}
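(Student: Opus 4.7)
The plan is to reduce this theorem to Lemma \ref{local nondeterminism theorem} applied with the exponent $\beta=1$, exploiting the fact that for a centered Gaussian vector, the joint density evaluated at any point is bounded by the value of $\U{K}_n^{\B{X}}$.

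First I would verify Property $\mathfrak{A}_1$ for $\B{X}$. The hypothesis $\sum_{i=1}^N\xi_i>1$ allows the choice $\beta=1<\sum_i\xi_i$ in Lemma \ref{local nondeterminism theorem}, which immediately yields
$$
\int_{(\mathcal{I}^N)^n}\U{K}_n^{\B{X}}(\B{t}_1,\cdots,\B{t}_n)\,\mathrm{d}\B{t}_1\cdots\mathrm{d}\B{t}_n\leq c^n\,(n!)^{1/\sum_{i=1}^N\xi_i}.
$$
In particular $\U{K}_n^{\B{X}}$ is integrable over $(\mathcal{I}^N)^n$ for every $n$, which is precisely Property $\mathfrak{A}_1$. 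By the referenced Proposition \ref{existence and approximation}, this implies that the pointwise local times $Z_{\B{x}}=L_{\B{x}}(\B{X},\mathcal{I}^N)$ exist for every $\B{x}\in\mathbb{R}^d$ and have finite moments of all orders.

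Next I would write out the moments via the standard Gaussian density representation. Using the pointwise definition of $L_{\B{x}}$, Fubini, and dominated convergence (these are exactly the steps justified in Proposition \ref{existence and approximation}), one gets
$$
\mathbb{E}(Z_{\B{x}}^n)=\int_{(\mathcal{I}^N)^n} p_{\B{X}_{\B{t}_1},\cdots,\B{X}_{\B{t}_n}}(\B{x},\cdots,\B{x})\,\mathrm{d}\B{t}_1\cdots\mathrm{d}\B{t}_n,
$$
where $p_{\B{X}_{\B{t}_1},\cdots,\B{X}_{\B{t}_n}}$ denotes the joint Gaussian density of the vector $[\B{X}_{\B{t}_1},\cdots,\B{X}_{\B{t}_n}]$. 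Since this Gaussian vector is centered, its density at any point $(\B{x},\cdots,\B{x})$ is at most the prefactor, and this prefactor is exactly $\U{K}_n^{\B{X}}(\B{t}_1,\cdots,\B{t}_n)$ by definition \eqref{K_n Definition}. Hence
$$
\mathbb{E}(Z_{\B{x}}^n)\leq\int_{(\mathcal{I}^N)^n}\U{K}_n^{\B{X}}(\B{t}_1,\cdots,\B{t}_n)\,\mathrm{d}\B{t}_1\cdots\mathrm{d}\B{t}_n.
$$

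Combining the two displayed bounds gives the desired inequality. There is no real obstacle here beyond invoking Lemma \ref{local nondeterminism theorem} with the correct $\beta$; the entire combinatorial heart of the estimate — i.e., where the factorial rate $(n!)^{1/\sum\xi_i}$ comes from — is packaged inside that lemma. The only mild subtlety is that the bound on the density by $\U{K}_n^{\B{X}}$ holds uniformly in $\B{x}$, which is what makes the conclusion independent of the point $\B{x}$.
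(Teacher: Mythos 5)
Your proposal is correct and follows exactly the paper's route: the paper proves this theorem by noting it is immediate from Lemma \ref{local nondeterminism theorem} with $\beta=1$ (admissible since $1<\sum_i\xi_i$), combined with Proposition \ref{existence and approximation}, which is precisely the argument you spell out. The only difference is that you make explicit the uniform-in-$\B{x}$ bound of the joint Gaussian density by $\U{K}_n^{\B{X}}$, a detail the paper leaves inside Proposition \ref{existence and approximation}.
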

\begin{proof}
It is immediate from Lemma \ref{local nondeterminism theorem}.
\end{proof}

\begin{dfn}
Let $\{\B{X}_{k}(\B{t}_k)\,:\,\B{t}_k\in \mathbb{R}^{N_k}\}_{k=1}^m$ be a family of $m$ independent Gaussian fields such that for every $k=1, \cdots, m$, the random field $\B{X}_{k}(\B{t})$
is a centered Gaussian $(N_k,d)$-field. We define their ($m$-fold) intersection local time around the origin and over $\mathcal{I}$  as the local time of the following $(\sum_{k=1}^m N_k,(m-1)d)$-field at $\B{0}$ and over the cube $\mathcal{I}^{\sum_{k=1}^m N_k}$ (if it exists)
$$
\bigl(\B{X}_1(\B{t_1})-\B{X}_2(\B{t_2}), \B{X}_2(\B{t_2})-\B{X}_3(\B{t_3}),\cdots, \B{X}_{m-1}(\B{t_{m-1}})-\B{X}_m(\B{t_m})\bigr).
$$
\end{dfn}

\begin{thm}
\label{Upper bound for intersection local times}
Let $m$ be a positive integer, and $\{\B{X}_{k}(\B{t}_k)\,:\,\B{t}_k\in \mathbb{R}^{N_k}\}_{k=1}^m$ be a family of $m$ independent Gaussian fields such that for every $k=1, \cdots, m$, the random field $\B{X}_{k}(\B{t}_k)$ is a centered Gaussian $(N_k,d)$-field that is strongly locally nondeterministic with scaling vector $\B{\xi}_k=(\xi_{k,1}, \xi_{k,2}, \cdots, \xi_{k,N_k})\in \mathbb{R}_+^{N_k}$.
If $\sum_{k=1}^{m}\sum_{i=1}^{N_k}\xi_{k,i}>m-1$
, then the $m$-fold intersection local time of the family $\{\B{X}_{k}(\cdot)\}_{k=1}^m$ over the interval $\mathcal{I}$ exists. Moreover, denoting this intersection local time by $\mathfrak{I}_{\B{X}}$, and defining $\tilde{\xi}_k:=\sum_{i=1}^{N_k}\xi_{k,i}$, for any arbitrary sequence of positive numbers $q_1$, $q_2$, ..., $q_m$ such that $\sum_{k=1}^{m}q_k=m-1$, and such that $0\leq q_k\leq 1$ and $q_k<\tilde{\xi}_k$ (for every $k=1, \cdots, m$), we have
$$
\mathbb{E}\bigl((\mathfrak{I}_{\B{X}})^n\bigr)\leq c^n \,
(n!)^{\sum_{k=1}^{m} \frac{q_k}{\tilde{\xi}_k}},
$$
where $c$ is a positive constant that does not depend on $n$.\\
\end{thm}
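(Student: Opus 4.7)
The plan is to recognize $\mathfrak{I}_{\B{X}}$ as the local time at the origin of the auxiliary centered Gaussian $\bigl(\sum_k N_k,(m-1)d\bigr)$-field
$$\B{Y}_{\B{s}}:=\bigl(\B{X}_1(\B{t}_1)-\B{X}_2(\B{t}_2),\ldots,\B{X}_{m-1}(\B{t}_{m-1})-\B{X}_m(\B{t}_m)\bigr),\qquad \B{s}=(\B{t}_1,\ldots,\B{t}_m),$$
over $\mathcal{I}^{\sum_k N_k}$, and to reduce the entire theorem to a pointwise bound on $\U{K}_n^{\B{Y}}$ that factors across $k$. Proposition~\ref{existence and approximation} guarantees that integrability of $\U{K}_n^{\B{Y}}$ yields existence of $\mathfrak{I}_{\B{X}}$ together with $\mathbb{E}(\mathfrak{I}_{\B{X}}^n)=\int\U{K}_n^{\B{Y}}(\B{s}_1,\ldots,\B{s}_n)\,\mathrm{d}\B{s}_1\cdots\mathrm{d}\B{s}_n$, so it suffices to upper-bound this integral by $c^n(n!)^{\sum_k q_k/\tilde{\xi}_k}$.

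The heart of the argument is a determinantal lower bound on $\det\cov[\B{Y}_{\B{s}_1},\ldots,\B{Y}_{\B{s}_n}]$. By independence of the $\B{X}_k$'s,
$$\cov[\B{Y}_{\B{s}_1},\ldots,\B{Y}_{\B{s}_n}]\;=\;\sum_{k=1}^m B_k\,\Sigma_k\,B_k^\dagger,\qquad\Sigma_k:=\cov[\B{X}_k(\B{t}_{k,1}),\ldots,\B{X}_k(\B{t}_{k,n})],$$
where $B_k$ is the $(m-1)nd\times nd$ block matrix encoding how $\B{X}_k$ enters $\B{Y}$ (namely $+I_{nd}$ in block row $k$ when $k<m$, and $-I_{nd}$ in block row $k-1$ when $k>1$). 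For each fixed $k_0\in\{1,\ldots,m\}$, dropping the positive-semidefinite summand indexed by $k_0$ gives $\cov[\B{Y}]\succeq[B]_{\neq k_0}\,\textrm{diag}(\Sigma_k)_{k\neq k_0}\,[B]_{\neq k_0}^\dagger$; a direct inspection shows that the resulting square block matrix $[B]_{\neq k_0}$ is block-triangular (block-upper-triangular on columns of index $<k_0$, and block-lower-triangular on those of index $>k_0$), with $\pm I_{nd}$ on its block diagonal, so $|\det[B]_{\neq k_0}|=1$. Hence $\det\cov[\B{Y}]\geq\prod_{k\neq k_0}\det\Sigma_k$ for every $k_0$. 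Log-averaging these $m$ inequalities with weights $p_{k_0}\in[0,1]$ summing to $1$ and setting $q_k:=1-p_k$ yields
$$\det\cov[\B{Y}]\;\geq\;\prod_{k=1}^m(\det\Sigma_k)^{q_k},\qquad 0\leq q_k\leq 1,\quad\sum_{k=1}^m q_k=m-1,$$
which is exactly the range of $(q_k)$ stated in the theorem.

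The remainder is routine. Because $\sum_k q_k d=(m-1)d$, the $(2\pi)$-normalizations cancel and the above lifts to the pointwise bound $\U{K}_n^{\B{Y}}(\B{s}_1,\ldots,\B{s}_n)\leq\prod_{k=1}^m\bigl(\U{K}_n^{\B{X}_k}(\B{t}_{k,1},\ldots,\B{t}_{k,n})\bigr)^{q_k}$. Since the $k$-th factor depends only on $\B{t}_{k,\cdot}$, integrating over $(\mathcal{I}^{\sum_k N_k})^n=\prod_k(\mathcal{I}^{N_k})^n$ factorizes into a product of $m$ integrals, and the hypothesis $q_k<\tilde{\xi}_k$ allows Lemma~\ref{local nondeterminism theorem} to bound the $k$-th factor by $c_k^n(n!)^{q_k/\tilde{\xi}_k}$. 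Multiplying yields the theorem's estimate.

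The main obstacle is the determinantal lower bound: each rank-$nd$ summand $B_k\Sigma_k B_k^\dagger$ is singular in dimension $(m-1)nd$, so Minkowski's determinant inequality does not apply directly. The crucial observation is that the tridiagonal $\pm I_{nd}$ pattern of the $B_k$'s guarantees that omitting any single index leaves a full-rank combination of unit Jacobian, and averaging over the $m$ possible omissions via the logarithm converts the sum-of-PSD structure into exactly the weighted geometric-mean lower bound required, with the probability simplex of weights $(p_{k_0})$ parametrizing (via $p\mapsto\mathbf{1}-p$) precisely the admissible range of $(q_k)$ in the theorem.
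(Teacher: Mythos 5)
Your proposal is correct and follows the paper's overall skeleton — reduce to the pointwise kernel bound $\U{K}_n^{\B{Y}}\leq\prod_{k}\bigl(\U{K}_n^{\B{X}_k}\bigr)^{q_k}$, factorize the integral over $\prod_k(\mathcal{I}^{N_k})^n$, and invoke Lemma \ref{local nondeterminism theorem} with $\beta=q_k<\tilde{\xi}_k$ — but you reach the central determinant inequality by a genuinely different argument. The paper first uses invariance of $\detcov$ under adding linear combinations of the other blocks to rewrite the difference vector with an arbitrary base point $k_0$, then applies its Appendix Proposition \ref{Reduction Inequality for detCov} (proved via conditional covariances and Gram-determinant identities) together with independence, which gives $\detcov[\B{A}_1-\B{A}_2,\cdots,\B{A}_{m-1}-\B{A}_m]\geq\prod_{k\neq k_0}\detcov[\B{A}_k]$; you instead decompose $\cov[\B{Y}]=\sum_k B_k\Sigma_k B_k^\dagger$, drop the $k_0$-th positive-semidefinite summand, and use monotonicity of the determinant in the Loewner order plus unimodularity of $[B]_{\neq k_0}$ to get the same single-omission bound, after which both arguments take the identical weighted geometric mean over $k_0$ with weights $p_{k_0}=1-q_{k_0}$. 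Your route is self-contained linear algebra and bypasses the paper's appendix machinery; the paper's route buys a reusable reduction inequality that it also needs elsewhere (e.g.\ in Section \ref{Existence of the limit}). Two small polish points: your matrix $[B]_{\neq k_0}$ is triangular only after a suitable reordering of block rows and columns (or one can see $|\det|=1$ directly from the telescoping inverse $\B{A}_k=\pm\sum_j D_j$ with $\B{A}_{k_0}=0$), and you should note, as the paper does, that passing from the interleaved stacking $[\B{Y}_{\B{s}_1},\ldots,\B{Y}_{\B{s}_n}]$ to the grouped vectors $\B{A}_k$ is a permutation and hence leaves $\detcov$ unchanged; neither affects correctness.
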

\begin{proof}
In Section \ref{Section on Upper bounds}.
\end{proof}

\begin{thm}\label{main theorem on local times}
Suppose $\B{X}_{\B{t}}$ is a centered Gaussian $(N,d)$-field satisfying Properties $\mathfrak{A}_0$, $\mathfrak{A}_1$, $\mathfrak{A}_2$, and $\mathfrak{A}_3$ with some matrix $\BU{H}\in \mathbb{R}^{d\times d}$ and vector $(\alpha_1, \cdots, \alpha_N)\in \mathbb{R}_+^N$
such that $\alpha_i$'s are mutually rational, i.e., $\frac{\alpha_i}{\alpha_j}\in \mathbb{Q}$ for every $i$ and $j$. Then the following limits exist in $\mathbb{R}_+\bigcup\{+\infty\}$
, and are strictly positive
$$
\lim_{x\rightarrow +\infty}\frac{-\log \mathbb{P}(Z>x)}{x^{\frac{1}{\lambda}}}\quad \text{and}\quad \lim_{n\rightarrow +\infty}\frac{\sqrt[n]{\mathbb{E}(Z^n)}}{n^{\lambda}},
$$
where $Z:=L_{\B{0}}(\B{X},[0,1]^N)$ and $\lambda:=\frac{\textrm{tr}(\BU{H})}{\sum_{k=1}^{N}\alpha_k}$. Moreover, if $\B{X}_{\B{t}}$ is also strongly locally nondeterministic over $\mathcal{I}^N$,
then the above limits will be finite.
\end{thm}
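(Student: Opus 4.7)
The plan is to reduce the tail-decay claim to a moment-asymptotics statement via a Tauberian theorem, then establish the existence of the moment limit using diagonal self-similarity and a Fekete argument on the natural scale subsemigroup. Let $M_n:=\mathbb{E}[Z^n]$. An exponential Tauberian theorem of Kasahara / de~Bruijn type (see Section~\ref{Formulation in moments growth rate}) links the two limits so that existence, strict positivity and finiteness transfer between them; hence it suffices to prove the corresponding statement for the moment sequence.

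Combining Property~$\mathfrak{A}_3$ with the change-of-variables rule for occupation densities (rescaling target by $\omega^{-\BU H}$ and time domain by $\omega^{\B\alpha}$) yields the self-similarity of the local time,
\begin{equation*}
L_{\B 0}\bigl(\B X,\prod_{i=1}^N[0,\omega^{\alpha_i}]\bigr)\;\overset{d}{=}\;\omega^{(1-\lambda)\sum_i\alpha_i}\,Z\qquad(\omega>0),
\end{equation*}
so that $\mathbb{E}\bigl[L_{\B 0}(\B X,\prod[0,\omega^{\alpha_i}])^n\bigr]=\omega^{n(1-\lambda)\sum_i\alpha_i}\,M_n$. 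The mutual rationality of the $\alpha_i/\alpha_j$ provides a sequence $\omega_m\uparrow\infty$ with every $\omega_m^{\alpha_i}\in\mathbb{N}$. For such $\omega$, tile $\prod[0,\omega^{\alpha_i}]$ into $K:=\omega^{\sum_i\alpha_i}$ unit subcubes $\B j+\mathcal I^N$, $\B j\in\mathcal J$, and expand the big-cube moment as
\begin{equation*}
\omega^{n(1-\lambda)\sum_i\alpha_i}M_n
=\sum_{\B j_1,\ldots,\B j_n\in\mathcal J}\int_{\prod_l(\B j_l+\mathcal I^N)}\U{K}_n^{\B X}(\B t_1,\ldots,\B t_n)\,d\B t_1\cdots d\B t_n.
\end{equation*}
Fischer's determinant inequality applied to the block decomposition of the covariance matrix by subcube gives, on each multi-index term, $\U{K}_n^{\B X}\geq\prod_{\B j}\U{K}_{n_{\B j}}^{\B X}$ where $n_{\B j}:=\#\{l:\B j_l=\B j\}$; each block integral over $(\B j+\mathcal I^N)^{n_{\B j}}$ is then, via Property~$\mathfrak A_2$ together with a conditioning argument on $\B X_{\B j}$, related to the unit-cube moment $M_{n_{\B j}}$ up to an explicit nuisance factor. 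Summing and extracting the dominant multinomial term produces a super-multiplicative-type lower bound which, after taking logarithms, gives approximate super-additivity of $\varphi(n):=\log M_n-n\lambda\log n$ along the scale sub-semigroup $\{K^k\}_{k\geq 1}$; a Fekete-type lemma then delivers the existence of $\lim_n M_n^{1/n}/n^{\lambda}$ in $(0,+\infty]$.

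Strict positivity follows from the trivial lower bound $M_n\geq M_1^n>0$ together with the scaling identity. Under the SLND assumption, the remark preceding Proposition~\ref{parameters restrictions} gives $\sum_i\xi_i=1/\lambda$, and Lemma~\ref{local nondeterminism theorem} with $\beta=1$ yields $M_n\leq c^n(n!)^{\lambda}\lesssim c^n n^{n\lambda}$, whence $\limsup M_n^{1/n}/n^{\lambda}<\infty$ and the limit is finite.

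The crux is the super-additivity step: the local times over adjacent unit cubes are not independent, so Fischer's block decomposition reduces the problem only to block-wise integrals on translated unit cubes, each of which by Property~$\mathfrak A_2$ is related to $M_{n_{\B j}}$ up to the nuisance of the initial value $\B X_{\B j}$. Taming this nuisance (via explicit Gaussian conditioning, or via a Slepian-type correlation comparison) is the technical heart of the argument. The mutual-rationality hypothesis on the $\alpha_i$ is essential here: it keeps the admissible scaling factors $\omega$ closed under multiplication, turning the Fekete estimate into a genuine limit rather than mere $\liminf=\limsup$ along a subsequence.
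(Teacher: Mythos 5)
Your overall strategy is the same as the paper's (Kasahara's Tauberian theorem to reduce to moments, self-similar rescaling plus subdivision into congruent subcubes, a Fischer/Hadamard-type determinant inequality to factor $\U{K}_n$ over subcubes, and an anchoring argument via stationary increments to recover unit-cube moments), but there is a genuine gap at the decisive step. Approximate super-multiplicativity obtained from a single admissible scale $K=\omega^{\sum_i\alpha_i}$ relates $M_n$ to $M_{n'}$ only when $n'/n\approx K$, so a Fekete-type argument along the subsemigroup $\{K^k\}$ yields convergence of $M_n^{1/n}/n^{\lambda}$ only along a geometrically sparse subsequence. This does not give the limit over all $n$: the only interpolation available is H\"older monotonicity of $n\mapsto(\mathbb{E}Z^n)^{1/n}$, and between consecutive indices $K^k$ and $K^{k+1}$ the normalization $n^{\lambda}$ moves by the fixed factor $K^{\lambda}$, so you only get $\liminf\geq K^{-\lambda}\limsup$, not equality. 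Your closing claim that mutual rationality of the $\alpha_i$ ``turn[s] the Fekete estimate into a genuine limit rather than mere $\liminf=\limsup$ along a subsequence'' is precisely what is not proved. The paper closes exactly this hole by running the iterated inequality (Lemmas \ref{The Inequality} and \ref{Iterated Inequality}) simultaneously for two multiplicatively independent admissible scales with $M_1=2^{m_0}$ and $M_2=3^{m_0}$, and then invoking Dirichlet's approximation theorem (Theorem \ref{Dirichlet}) on the irrational number $\log_2 3$ so that the attainable indices $\Phi_{r,p,q}\approx r\,2^{m_0p}3^{m_0q}$ approximate any large $n$ within a multiplicative factor arbitrarily close to $1$; only then does H\"older monotonicity force $\underline{\ell}=\overline{\ell}$ (Lemma \ref{existence of moments limit}). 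Mutual rationality is used to guarantee the existence of such integer scales, not to make one-scale Fekete suffice.

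Two further points. First, your strict-positivity argument is not valid as stated: $M_n\geq M_1^n$ gives $M_n^{1/n}/n^{\lambda}\geq M_1/n^{\lambda}\to 0$, which only shows the limit is nonnegative; in the paper positivity comes from letting $q\to\infty$ in Lemma \ref{Iterated Inequality} with $r$ fixed, the right-hand side being bounded below by a positive quantity uniformly in $q$ (and this already requires the multinomial counting factor $\mathfrak{N}^{M(r+1)}_M\gtrsim M^{M(r+1)}$ of Lemma \ref{Partitioning a set}, which is what produces the $n^{n\lambda}$ growth). Second, the ``nuisance'' you defer is not cosmetic: the paper handles it by sacrificing one anchor point per subcube (hence $r+1$ points per box yielding $\mathbb{E}(Z^r)$), bounding the anchor's covariance via $\mathfrak{A}_0$ and Proposition \ref{Reduction Inequality for detCov}, and then paying a position-dependent rescaling factor whose integral $J(\B{\alpha},\BU{H},r)$ must be shown to decay only polynomially in $r$ (Equation \eqref{S2E6}); a Slepian-type comparison is not an appropriate substitute here, since the quantities involved are determinants of covariance matrices, not monotone functionals of pairwise correlations. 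Your finiteness argument under strong local nondeterminism (Lemma \ref{local nondeterminism theorem} with $\beta=1$ and $\sum_i\xi_i=1/\lambda$) does match the paper.
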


\begin{proof}
In Section \ref{Existence of the limit}.
\end{proof}

\begin{remark}\label{Exceptional Case}
One should note that although Properties $\mathfrak{A}_0$, $\mathfrak{A}_1$, $\mathfrak{A}_2$, and $\mathfrak{A}_3$ guarantee the convergence of the sequence $\{\frac{\bigl(\mathbb{E}(Z^n)\bigr)^{\frac{1}{n}}}{n^{\lambda}}\}_n$, they do not imply the finiteness of the limit. Probably the simplest example would be the centered Gaussian (2,1)-field $X(s,t)$ ($s,t\in \mathbb{R}$)
characterized by $X(0,0)=0$ and $\mathbb{E}\bigl(X(s_1,t_1)-X(s_2,t_2)\bigr)^2=(t_1-s_1)^{2H}+(t_2-s_2)^{2H}$, where $H\in(0,1)$. It clearly satisfies all the properties $\mathfrak{A}_0$, $\mathfrak{A}_1$, $\mathfrak{A}_2$, and $\mathfrak{A}_3$ with the self-similarity scaling $(\B{\alpha},H)$, where $\B{\alpha}:=(1,1)$. So by Theorem \ref{main theorem on local times}, we know that $\{\frac{\sqrt[n]{\mathbb{E}(L_0^n)}}{n^{H/2}}\}_n$ converges, where $L_0$ is the local time of $X$ around the origin on the square $\mathcal{I}^2$. On the other hand, one can easily verify that $X$ is equivalent to $\{B_1(s)-B_2(t)\,:\, (s,t)\in \mathbb{R}^2\}$, where $B_1$ and $B_2$ are two independent fractional Brownian motions of Hurst parameter $H$. So $L_0$, i.e., the local time of $X$ around the origin, is the same as $\mathfrak{I}_B$, i.e., the intersection local time of two independent fractional Brownian motions with the same Hurst parameter. By \cite[Theorem 2.4.]{ChenLiRosinskyShao11}, we know that $\{\frac{\sqrt[n]{\mathbb{E}(\mathfrak{I}_B^n)}}{n^{H}}\}_n$ converges to a strictly positive finite constant. This shows that the right growth exponent of $\sqrt[n]{\mathbb{E}(L_0^n)}$ is $n^{H}$.
\end{remark}


\begin{cor}\label{limit theorem on intersection local times}
Let $m$ be a positive integer, and $\{\B{X}_{k}(\B{t}_k)\,:\,\B{t}_k\in \mathbb{R}^{N_k}\}_{k=1}^m$ be a family of $m$ independent Gaussian fields such that for every $k=1, \cdots, m$, the random field $\B{X}_{k}(\B{t})$ is a centered Gaussian $(N_k,d)$-field
satisfying Properties $\mathfrak{A}_0$, $\mathfrak{A}_2$, and $\mathfrak{A}_3$ with the self-similarity scaling matrix $\BU{H}\in \mathbb{R}^{d\times d}$ and scaling vector $\B{\alpha}_k=(\alpha_{k,1}, \cdots, \alpha_{k,N_k})\in \mathbb{R}_+^{N_k}$.
If for every positive integer $n$ and every $k=1, \cdots, m$, the kernel
$\big(\U{K}_n^{\B{X}_k}(\B{t}_1, \cdots, \B{t}_n)\bigr)^{\frac{m-1}{m}}$ is integrable over $\bigl(\mathcal{I}^{N_k}\bigr)^{n}$, then the $m$-fold intersection local time of $\{\B{X}_{k}\}_{k=1}^m$ on the interval $[0,1]$ exists, and
if moreover, every pair of $\alpha_{k,i}$ and $\alpha_{l,j}$ are mutually rational, i.e., $\frac{\alpha_{k,i}}{\alpha_{l,j}}\in \mathbb{Q}$, then denoting the $m$-fold intersection local time of $\{\B{X}_k\}_{k}$ by $\mathfrak{I}_{m}$, the following limits exists
$$
\lim_{y\rightarrow +\infty}\frac{-\log \mathbb{P}(\mathfrak{I}_{m}>y)}{y^{\frac{1}{\gamma}}}
\quad
\text{and}
\quad
\lim_{n\rightarrow +\infty}\frac{\sqrt[n]{\mathbb{E}\bigl((\mathfrak{I}_{m})^n\bigr)}}
{n^{\gamma}},
$$
where $\gamma:=\frac{(m-1)\textrm{tr}(\BU{H})}
{\sum_{k=1}^{m}\sum_{i=1}^{N_k}\alpha_{k,i}}$.
\end{cor}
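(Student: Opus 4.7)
The plan is to reduce the claim to Theorem~\ref{main theorem on local times} by considering the auxiliary centered Gaussian $\bigl(\sum_{k=1}^{m} N_k,\,(m-1)d\bigr)$-field
\[
\B{Y}(\B{t}_1, \ldots, \B{t}_m) := \bigl(\B{X}_1(\B{t}_1)-\B{X}_2(\B{t}_2),\; \ldots,\; \B{X}_{m-1}(\B{t}_{m-1})-\B{X}_m(\B{t}_m)\bigr),
\]
whose local time at $\B{0}$ over $\mathcal{I}^{\sum_k N_k}$ is, by definition, $\mathfrak{I}_{m}$. Once I verify that $\B{Y}$ satisfies Properties $\mathfrak{A}_0$, $\mathfrak{A}_1$, $\mathfrak{A}_2$, $\mathfrak{A}_3$ with scaling data whose entries are mutually rational, Theorem~\ref{main theorem on local times} will supply the two limits with the correct exponent.

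For $\mathfrak{A}_0$ and $\mathfrak{A}_2$, boundedness of the variance and stationarity of the increments of $\B{Y}$ are inherited from the individual $\B{X}_k$ thanks to their independence. For $\mathfrak{A}_3$, I take $\widetilde{\B{\alpha}} := (\B{\alpha}_1, \ldots, \B{\alpha}_m) \in \mathbb{R}_+^{\sum_k N_k}$ and $\widetilde{\BU{H}} := \textrm{diag}(\BU{H}, \ldots, \BU{H})$ (block-diagonal with $m-1$ copies of $\BU{H}$); independence combined with the self-similarity $\B{X}_k(\B{t}_k \circ \omega^{\B{\alpha}_k}) \overset{d}{=} \omega^{\BU{H}} \B{X}_k(\B{t}_k)$ under a common $\omega$ gives
\[
\B{Y}\bigl((\B{t}_1, \ldots, \B{t}_m) \circ \omega^{\widetilde{\B{\alpha}}}\bigr) \overset{d}{=} \omega^{\widetilde{\BU{H}}} \B{Y}(\B{t}_1, \ldots, \B{t}_m).
\]
Mutual rationality of the $\alpha_{k,i}$ transfers to $\widetilde{\B{\alpha}}$, and $\textrm{tr}(\widetilde{\BU{H}}) = (m-1)\,\textrm{tr}(\BU{H})$, so the self-similarity exponent produced by Theorem~\ref{main theorem on local times} equals $\gamma$.

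The technically delicate step is $\mathfrak{A}_1$ for $\B{Y}$. By Fourier inversion of the centered Gaussian density of $[\B{Y}^{(1)},\ldots,\B{Y}^{(n)}]$ at $\B{0}$, combined with independence of the $\B{X}_k$, one writes
\[
\U{K}_n^{\B{Y}}(\B{t}) = (2\pi)^{-n(m-1)d} \int_{\mathbb{R}^{n(m-1)d}} e^{-\frac{1}{2}\sum_{k=1}^{m} \B{v}_k^\dagger \Sigma_k \B{v}_k} \, d\B{u},
\]
where $\Sigma_k$ is the covariance of $[\B{X}_k(\B{t}_k^{(1)}), \ldots, \B{X}_k(\B{t}_k^{(n)})]$ and $\B{v}_k^{(j)} := \B{u}_k^{(j)} - \B{u}_{k-1}^{(j)}$ (with the convention $\B{u}_0^{(j)} = \B{u}_m^{(j)} = 0$), so that $\sum_{k=1}^{m} \B{v}_k^{(j)} = 0$. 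After eliminating $\B{v}_m$ via this linear constraint, the integral becomes the value at $\B{0}$ of the $m$-fold convolution of the symmetric Gaussian functions $f_k(\B{v}) := e^{-\frac{1}{2}\B{v}^\dagger \Sigma_k \B{v}}$; Young's convolution inequality with all exponents equal to $m/(m-1)$ (whose reciprocals sum to $m-1$) then yields the pointwise bound
\[
\U{K}_n^{\B{Y}}(\B{t}_1, \ldots, \B{t}_m) \;\leq\; C \prod_{k=1}^{m} \bigl(\U{K}_n^{\B{X}_k}(\B{t}_k)\bigr)^{\frac{m-1}{m}}.
\]
Since the $\B{t}_k$ live on disjoint groups of coordinates, the multi-integral over $\bigl(\mathcal{I}^{\sum_k N_k}\bigr)^{n}$ factorizes into a product of $m$ single-block integrals, each finite by hypothesis; hence $\mathfrak{A}_1$ holds and $\mathfrak{I}_{m} = L_{\B{0}}(\B{Y}; \mathcal{I}^{\sum_k N_k})$ exists with all moments finite. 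Applying Theorem~\ref{main theorem on local times} to $\B{Y}$ then yields the asserted limits with exponent $\gamma$. The principal obstacle is precisely this Young-convolution step: the coupling $\sum_k \B{v}_k^{(j)} = 0$ among the dual variables prevents a naive Hölder decoupling, and it is this constraint that forces the exponent $\tfrac{m-1}{m}$ on the individual kernels, matching exactly the integrability hypothesis placed on the $\U{K}_n^{\B{X}_k}$.
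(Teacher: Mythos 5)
Your proposal is correct, and its overall architecture is exactly the paper's: pass to the adjoined difference field, check $\mathfrak{A}_0$, $\mathfrak{A}_2$, $\mathfrak{A}_3$ with the concatenated vector $\tilde{\B{\alpha}}$ and the block-diagonal matrix $\mathrm{diag}(\BU{H},\dots,\BU{H})$ of trace $(m-1)\,\mathrm{tr}(\BU{H})$, establish $\mathfrak{A}_1$ through the pointwise kernel bound $\U{K}_n^{\B{\Delta}}\leq \prod_k\bigl(\U{K}_n^{\B{X}_k}\bigr)^{\frac{m-1}{m}}$ together with the factorization of the integral over the disjoint coordinate blocks, and then invoke Theorem \ref{main theorem on local times}. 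Where you genuinely diverge is in how the kernel bound is obtained. The paper simply reuses Equation \eqref{upper bound on intersection kernel} from the proof of Theorem \ref{Upper bound for intersection local times}, which is derived by purely linear-algebraic means: Proposition \ref{Reduction Inequality for detCov} gives $\detcov[\B{A}_1-\B{A}_2,\dots,\B{A}_{m-1}-\B{A}_m]\geq \detcov[\B{A}_1,\dots,\B{A}_m]/\detcov[\B{A}_k]$ for each pivot $k$, a weighted geometric mean over $k$ with weights $p_k=1/m$ is taken, and independence factorizes $\detcov[\B{A}_1,\dots,\B{A}_m]$, yielding the inequality with constant exactly $1$ (note that inequality does not use the strong local nondeterminism hypothesis of that theorem, so its reuse here is legitimate). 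You instead rederive the bound from scratch by Fourier inversion of the Gaussian density at the origin and Young's convolution inequality with all exponents $m/(m-1)$; this is valid (the change of variables $\B{u}\mapsto(\B{v}_1,\dots,\B{v}_{m-1})$ is unimodular, and the resulting constant $\bigl(\tfrac{m-1}{m}\bigr)^{nd(m-1)/2}\leq 1$ is harmless since $\mathfrak{A}_1$ only requires integrability for each fixed $n$), provided you adopt the usual convention that in the degenerate case both the kernel and the Fourier integral equal $+\infty$, so the identity and the inequality persist in $[0,+\infty]$. The trade-off: your Fourier--Young route is self-contained and makes transparent why the exponent $\frac{m-1}{m}$ is forced (the constraint $\sum_k\B{v}_k^{(j)}=\B{0}$ plays the role of the convolution structure), while the paper's route stays inside the conditional-covariance toolkit of its appendix and gives the cleaner constant-one bound; both deliver the same conclusion.
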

\begin{proof}
In Section \ref{Existence of the limit}.
\end{proof}

\begin{remark}
Although this theorem affirms the convergence of $\{\frac{\sqrt[n]{\mathbb{E}\bigl((\mathfrak{I}_{m})^n\bigr)}}
{n^{\gamma}}\}_n$, it does not guarantee that $\gamma=\frac{(m-1)\textrm{tr}(\BU{H})}
{\sum_{i=1}^{m}\sum_{k=1}^{N_i}\alpha_{i,k}}$ is the right exponent for the growth of $\sqrt[n]{\mathbb{E}\bigl((\mathfrak{I}_{m})^n\bigr)}$. For that, we would also need the finiteness of the above limit. In Corollary \ref{Exceptional Case}, we saw some examples where the right growth exponent is larger than the exponent given by this theorem ($H$ instead of $H/2$).
\end{remark}



\section{Definition of local times: Pointwise versus functional }\label{Two different definitions of local times}
Let $\B{X}$ be an $(N,d)$-random field over a cube $\mathcal{C}({\B{p}},\B{T})$ in $\mathbb{R}^N$. Let $\mu$ be the occupation measure of $\B{X}$, i.e. for every Borelian subset $\mathcal{B}\subseteq\mathbb{R}^d$ we have
$$
\mu_{\B{X}}(\mathcal{B})=\lambda_N(\{\B{t}\in\mathcal{C}({\B{p}},\B{T});\; \B{X}_{\B{t}}\in\mathcal{B}\}),
$$
where $\lambda_N$ denotes the Lebesgue measure on $\mathbb{R}^N$.
\begin{dfn}[Functional definition of local time]
If the occupation measure of $\B{X}$ is almost-surely absolutely continuous with respect to the Lebegue measure on $\mathbb{R}^d$, i.e. when $\mu_{\B{X}}\ll \lambda_d$, its Radon-Nikodym derivative is called the local time (or occupation density) of $\B{X}$ over $\mathcal{C}({\B{p}},\B{T})$, and we denote it by we denote this function by $\bar{L}_{\B{X}}$.
\end{dfn}
Clearly, this definition does not provide a unique pointwise definition for the local time but a set of functions that are equal to each other almost surely. It is also clear that for any positive (or bounded) measurable function $f:\mathbb{R}^d\rightarrow \mathbb{R}$, we have
$$
\int_{\mathcal{C}(\B{p},\B{T})}f(\B{X}_{\B{t}})\,\mathrm{d}\B{t}=
\int_{\mathbb{R}^d}f(\B{x}) \bar{L}_{\B{X}}(\B{x})\,\mathrm{d}\B{x}).
$$
A sufficient condition for the existence of the local time as defined above, is the following
\begin{equation}\label{sufficient condition for existence of functional local time}
\int_{\mathcal{C}(\B{p},\B{T})}\int_{\mathcal{C}(\B{p},\B{T})} \frac{1}{\sqrt{\detcov(\B{X}_{\B{t}}-\B{X}_{\B{s}})}}\,
\mathrm{d}\B{t}\,\mathrm{d}\B{s}<+\infty;
\end{equation}
see e.g. \cite{Pitt78} or \cite{GemanHorowitz1980}. By Corollary \ref{Reduction Inequality for detCov}, we have
$$
\detcov[\B{X}_{\B{t}} \B{X}_{\B{s}}]\leq \detcov(\B{X}_{\B{t}})  \detcov(\B{X}_{\B{t}}-\B{X}_{\B{s}}).
$$
So it is clear that conditions $\mathfrak{A}_1$ and $\mathfrak{A}_0$ imply Equation \eqref{sufficient condition for existence of functional local time}. So assuming these two conditions, we have the existence of the local times, both in the pointwise and functional definitions. Moreover, for every $\B{x}\in\mathbb{R}^d$ we have
$$
\int_{\mathcal{B}_\varepsilon(\B{x})} \bar{L}_{\B{X}}(\B{y})\,\mathrm{d}\B{y}=
\int_{\mathcal{C}(\B{p},\B{T})}
\mathbf{1}_{\|\B{X}_{\B{t}}-\B{x}\|<\varepsilon}(\B{t}) \,\mathrm{d}\B{t},
$$
hence
$$
\lim_{\varepsilon\rightarrow0}\frac{1}{\vol(\mathcal{B}_\varepsilon(\B{x}))}
\int_{\mathcal{B}_\varepsilon(\B{x})} \bar{L}_{\B{X}}(\B{y})\,\mathrm{d}\B{y}
=L_{\B{x}}(\B{X};\mathcal{C}(\B{p},\B{T})),
$$
which means that irrespective of the chosen version of $\bar{L}_{\B{X}}$, its pointwise local mean equals the local time defined in the pointwise manner.

\section{Formulation in moments growth rate}\label{Formulation in moments growth rate}
The following theorem is a special case of Kasahara's Tauberian theorem \cite[Theorem 4]{Kasahara78} which relates the probability tail behavior to the moments asymptotic behavior.

\begin{thm}[Kasahara 1978]\label{Kasahara}
For any positive random variable $Y$, any positive number $\lambda$, and any $A\in(0,+\infty]$,
the limit
$$
\lim_{n\rightarrow +\infty}\frac{\sqrt[n]{\mathbb{E}(Y^n)}}{n^\lambda}
$$
exists and equals $A$ if and only if the limit
$$
\lim_{x\rightarrow +\infty}\frac{-\log \mathbb{P}(Y>x)}{x^{\frac{1}{\lambda}}}
$$
exists and equals $\frac{\lambda}{e A^{\frac{1}{\lambda}}}$.
\end{thm}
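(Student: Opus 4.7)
The plan is to prove the equivalence via two one-sided estimates: Markov's inequality converts moment upper bounds into tail upper bounds, and the layer-cake formula $\mathbb{E}(Y^n)=n\int_0^\infty x^{n-1}\mathbb{P}(Y>x)\,\mathrm{d}x$ together with Stirling's formula converts tail upper bounds into moment upper bounds. Because both maps are monotone and the optimising constants match exactly under the correspondence $B=\lambda/(eA^{1/\lambda})$, pinching on one side forces matching asymptotics on the other, so existence of either limit delivers existence of both.

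For the implication $A\in(0,\infty)\Rightarrow B=\lambda/(eA^{1/\lambda})$, I would fix $\varepsilon>0$ and use Markov to write $\mathbb{P}(Y>x)\le \bigl((A+\varepsilon)n^\lambda/x\bigr)^n$ for every large $n$. Optimising the logarithm of the right-hand side over $n\in\mathbb{R}_+$ (the derivative vanishes at $n^\ast=(x/(e(A+\varepsilon)))^{1/\lambda}$, at which the exponent equals $-\lambda n^\ast$), then rounding $n^\ast$ to the nearest integer and letting $\varepsilon\downarrow 0$, yields
\[
\liminf_{x\to\infty}\frac{-\log\mathbb{P}(Y>x)}{x^{1/\lambda}}\;\ge\;\frac{\lambda}{eA^{1/\lambda}}.
\]
Monotonicity of $x\mapsto\mathbb{P}(Y>x)$ makes this a genuine liminf statement, not just a subsequential one.

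For the reverse implication $B\in(0,\infty)\Rightarrow A=(\lambda/(eB))^{\lambda}$, I would exploit the tail bound $\mathbb{P}(Y>x)\le \exp\bigl(-(B-\varepsilon)x^{1/\lambda}\bigr)$ valid for large $x$. Inserting it into the layer-cake formula and performing the substitution $u=(B-\varepsilon)x^{1/\lambda}$ produces
\[
\mathbb{E}(Y^n)\le C+\lambda\,(B-\varepsilon)^{-\lambda n}\,\Gamma(\lambda n+1),
\]
and Stirling gives $\Gamma(\lambda n+1)^{1/n}=(\lambda n/e)^\lambda(1+o(1))$, hence
\[
\limsup_{n\to\infty}\frac{\mathbb{E}(Y^n)^{1/n}}{n^\lambda}\;\le\;\Bigl(\frac{\lambda}{e(B-\varepsilon)}\Bigr)^{\!\lambda}.
\]
Letting $\varepsilon\downarrow 0$ and combining with the matching \emph{lower} bound obtained from $\mathbb{E}(Y^n)\ge x^n\mathbb{P}(Y>x)$ optimised at $x=(\lambda n/(e(B+\varepsilon)))^\lambda$ yields equality. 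Finally, to obtain existence in both directions simultaneously, I would observe that the two one-sided estimates are reciprocally sharp: any hypothetical gap between $\liminf$ and $\limsup$ on one side would, via the monotonicity of the relevant Legendre-type transform, produce an incompatible gap on the other.

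The boundary case $A=+\infty\leftrightarrow B=0$ reduces to the finite case by contradiction: if $A=+\infty$ but the tail limit had positive $\limsup\ge B'>0$, then applying the $B\Rightarrow A$ direction with this $B'$ would force $\mathbb{E}(Y^n)^{1/n}/n^\lambda$ to be bounded above by $(\lambda/(eB'))^\lambda$, contradicting $A=+\infty$; the symmetric argument handles the converse. The main technical obstacle will be the two Laplace-type optimisations, specifically the propagation of the $\varepsilon$-slack from the hypothesised asymptotics through the nonlinear relation $B=\lambda/(eA^{1/\lambda})$, and the rounding of the continuous critical points $n^\ast$, $x^\ast$ to integers (respectively, reals in the allowed range). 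These are standard but bookkeeping-heavy, requiring one to track how $o(1)$ errors in $\log\mathbb{E}(Y^n)/n$ translate into $o(x^{1/\lambda})$ errors in $-\log\mathbb{P}(Y>x)$ after the Legendre inversion.
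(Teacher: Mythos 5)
The paper offers no proof of this statement at all---it is quoted as a special case of Kasahara's Tauberian theorem \cite[Theorem 4]{Kasahara78}---so your attempt must stand on its own, and it has a genuine gap at precisely the Tauberian step. Your tools transfer information in only three of the four needed directions: Markov turns moment upper bounds into tail upper bounds; the layer-cake formula plus Stirling turns tail upper bounds into moment upper bounds; and $\mathbb{E}(Y^n)\geq x^n\,\mathbb{P}(Y>x)$ turns tail lower bounds into moment lower bounds. The fourth arrow---that the moment asymptotics force a tail \emph{lower} bound, i.e. $\limsup_{x\to\infty}x^{-1/\lambda}\bigl(-\log\mathbb{P}(Y>x)\bigr)\leq\lambda/(eA^{1/\lambda})$ when the moment limit equals $A$---is the actual Tauberian content of the theorem, and your proposal covers it only with the sentence asserting the two one-sided estimates are ``reciprocally sharp'' via ``monotonicity of the relevant Legendre-type transform.'' That is not an argument: a lower bound on $\mathbb{E}(Y^n)$ says the layer-cake integral is large but not \emph{where} its mass sits, so by itself it cannot preclude the tail dipping below $e^{-(B+\delta)x_0^{1/\lambda}}$ at some large $x_0$. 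The standard repair combines the moment lower bound with the global tail upper bound you have already proved, $\mathbb{P}(Y>x)\leq e^{-(B-\delta')x^{1/\lambda}}$ for all large $x$, and the monotonicity of $x\mapsto\mathbb{P}(Y>x)$, which propagates a hypothetical dip at $x_0$ over a multiplicative window to its right; choosing $n$ so that the Laplace critical point $(\lambda n/(B-\delta'))^{\lambda}$ lies in that window and splitting the integral at $x_0$ yields $\mathbb{E}(Y^n)\leq C\,(A n^{\lambda})^n e^{-cn}$ for suitable $c>0$, contradicting $\mathbb{E}(Y^n)\geq\bigl((A-\varepsilon)n^{\lambda}\bigr)^n$. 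Some localization argument of this kind must be written out; without it the forward implication of the equivalence is simply unproved.

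The endpoint reduction is also broken, and not merely in its bookkeeping. To rule out $\limsup_{x\to\infty}x^{-1/\lambda}\bigl(-\log\mathbb{P}(Y>x)\bigr)\geq B'>0$ when $A=+\infty$ you invoke the $B\Rightarrow A$ direction ``with this $B'$,'' but that direction consumes the bound $\mathbb{P}(Y>x)\leq e^{-(B'-\varepsilon)x^{1/\lambda}}$ for \emph{all} large $x$, whereas a positive limsup supplies it only along a sequence $x_k\to\infty$ (monotonicity extends it only over bounded multiplicative windows $[x_k,Cx_k]$, never globally), so the layer-cake estimate never gets started. Worse, the implication you assert is itself false: take a monotone tail equal to the heavy envelope $e^{-x^{1/(2\lambda)}}$ except on sparse intervals $[x_k,x_k^2]$ (with, say, $x_{k+1}\geq x_k^4$) where it is held constant at $e^{-x_k^{1/\lambda}}$; a case analysis on whether the window $[n^{\lambda}\omega_n,\,n^{2\lambda}/\omega_n]$, $\omega_n:=(\tfrac{\lambda}{2}\log n)^{\lambda}$, meets a heavy region shows $\mathbb{E}(Y^n)^{1/n}/n^{\lambda}\to+\infty$, while $x^{-1/\lambda}\bigl(-\log\mathbb{P}(Y>x)\bigr)$ equals $1$ at each $x_k$ and tends to $0$ along the heavy stretches, so the tail limit fails to exist. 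Thus the $A=+\infty$ case cannot be reached by your contradiction scheme at all and requires separate, more delicate treatment (indeed it signals that this boundary case of the quoted statement must be handled with care). Finally, two computational slips sit exactly at the optimizations you identify as the technical core: the critical points are $n^{\ast}=e^{-1}\bigl(x/(A+\varepsilon)\bigr)^{1/\lambda}$, not $\bigl(x/(e(A+\varepsilon))\bigr)^{1/\lambda}$, and $x^{\ast}=\bigl(\lambda n/(B+\varepsilon)\bigr)^{\lambda}$, not $\bigl(\lambda n/(e(B+\varepsilon))\bigr)^{\lambda}$; your stated conclusions are the correct ones, but with your $x^{\ast}$ the moment lower bound degrades by a factor $e^{-\lambda/e}$ per root and no longer matches $A$.
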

We aim to prove that for any centered Gaussian $(N,d)$-field $\B{X}$ satisfying conditions $\mathfrak{A}_1$, $\mathfrak{A}_0$, $\mathfrak{A}_2$, and $\mathfrak{A}_3$, the following limit exists
$$
\lim_{n\rightarrow +\infty}\frac{\sqrt[n]{\mathbb{E}(Z^n)}}{n^\lambda},
$$
where $Z:=L_{\B{0}}(\B{X},\mathcal{I}^N)$ and $\lambda:=\frac{ \textrm{tr}(\BU{H})}{\sum_{k=1}^{N}\alpha_k}$. Clearly this along with the above theorem proves the existence of the following limit with $\lambda:=\frac{ \textrm{tr}(\BU{H})}{\sum_{k=1}^{N}\alpha_k}$.
$$
\lim_{x\rightarrow +\infty}
x^{-\frac{1}{\lambda}}
\log\mathbb{P}[L_{\B{0}}(\B{X},\mathcal{I}^N)>x].
$$

We have the following proposition on the existence of the local time of zero-mean Gaussian fields at the origin and its moments. In the proof we use some arguments of \cite{Pitt78}.
\begin{prop}\label{existence and approximation}
For any Gaussian field $\B{X}$ satisfying condition $\mathfrak{A}_1$,  the local times $Z_{\B{x}}:=L_{\B{x}}(\B{X},\mathcal{I}^N)$ exist for every $\B{x}\in\mathbb{R}^d$, and we have
$$
\mathbb{E}(Z_{\B{x}}^n)\leq \int_{\prod_{i=1}^n \mathcal{I}^N} \U{K}_n^{\B{X}}(\B{t}_1, \cdots, \B{t}_n) \,\mathrm{d}\B{t}_1\cdots \mathrm{d}\B{t}_n\;:\quad \forall \B{x}\in\mathbb{R}^d,
$$
and
$$
\mathbb{E}(Z_{\B{0}}^n)= \int_{\prod_{i=1}^n \mathcal{I}^N} \U{K}_n^{\B{X}}(\B{t}_1, \cdots, \B{t}_n) \,\mathrm{d}\B{t}_1\cdots \mathrm{d}\B{t}_n,
$$
where $\prod_{i=1}^n\mathcal{I}^N$ denotes the $n$-times Cartesian product $\mathcal{I}^N\times \cdots\times \mathcal{I}^N$, and $\U{K}_n^{\B{X}}(\B{t}_1, \cdots, \B{t}_n)$ is as defined in Equation \eqref{K_n Definition}.
%
\end{prop}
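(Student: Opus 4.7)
The plan is to realize $Z_{\B{x}}$ as the limit of its natural $\varepsilon$-regularization and then to transfer moment bounds to the limit. Set
$$Z^{\varepsilon}_{\B{x}} := \int_{\mathcal{I}^N} \frac{\mathbf{1}_{\{\|\B{X}_{\B{t}}-\B{x}\|<\varepsilon\}}(\B{t})}{\vol(\mathcal{B}_\varepsilon(\B{x}))}\, \mathrm{d}\B{t}.$$
A non-negative Fubini calculation gives
$$\mathbb{E}\bigl((Z^{\varepsilon}_{\B{x}})^n\bigr) = \int_{(\mathcal{I}^N)^n} \frac{\mathbb{P}\bigl(\B{X}_{\B{t}_i} \in \mathcal{B}_\varepsilon(\B{x})\ \forall i\bigr)}{\vol(\mathcal{B}_\varepsilon(\B{x}))^n} \, \mathrm{d}\B{t}_1 \cdots \mathrm{d}\B{t}_n,$$
and Property $\mathfrak{A}_1$ (integrability of $\U{K}_n^{\B{X}}$) forces $\det \cov[\B{X}_{\B{t}_1}, \ldots, \B{X}_{\B{t}_n}]>0$ almost everywhere on $(\mathcal{I}^N)^n$, so the adjoined vector $[\B{X}_{\B{t}_1}, \ldots, \B{X}_{\B{t}_n}]$ admits a smooth joint Gaussian density $p_{\B{t}_1, \ldots, \B{t}_n}$ for a.e.\ tuple, and the integrand equals the average of this density over $\mathcal{B}_\varepsilon(\B{x})^n$.

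The key estimate is the elementary fact that a centered Gaussian density is maximized at its mean $\B{0}$:
$$p_{\B{t}_1, \ldots, \B{t}_n}(\B{y}_1, \ldots, \B{y}_n) \le p_{\B{t}_1, \ldots, \B{t}_n}(\B{0}, \ldots, \B{0}) = \U{K}_n^{\B{X}}(\B{t}_1, \ldots, \B{t}_n).$$
This furnishes an $\varepsilon$- and $\B{x}$-uniform dominating function that is integrable by $\mathfrak{A}_1$, while continuity of the Gaussian density yields pointwise convergence of the integrand to $p_{\B{t}_1,\ldots,\B{t}_n}(\B{x},\ldots,\B{x})$ as $\varepsilon \downarrow 0$. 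Dominated convergence then gives
$$\lim_{\varepsilon\to 0}\mathbb{E}\bigl((Z^{\varepsilon}_{\B{x}})^n\bigr) = \int_{(\mathcal{I}^N)^n} p_{\B{t}_1,\ldots,\B{t}_n}(\B{x},\ldots,\B{x}) \, \mathrm{d}\B{t}_1 \cdots \mathrm{d}\B{t}_n,$$
which is bounded by $\int \U{K}_n^{\B{X}}$ for every $\B{x}$ and becomes an equality at $\B{x}=\B{0}$, where the limiting integrand is identically $\U{K}_n^{\B{X}}$.

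Finally, to pass from moment convergence of the $Z^{\varepsilon}_{\B{x}}$ to the existence of $Z_{\B{x}}$ itself, I would apply the same computation to $\mathbb{E}[(Z^{\varepsilon_1}_{\B{x}} - Z^{\varepsilon_2}_{\B{x}})^2]$: expanding the square yields three integrals of the form handled above, each converging to the common limit $\int p_{\B{t}_1,\B{t}_2}(\B{x},\B{x})$ via the $\U{K}_2^{\B{X}}$ dominator, so the squared difference vanishes as $\varepsilon_1,\varepsilon_2 \to 0$. The family $\{Z^{\varepsilon}_{\B{x}}\}$ is therefore Cauchy in $L^2$ with limit $Z_{\B{x}}$; promoting this to a.s.\ convergence to match the pointwise definition is carried out by a Borel--Cantelli argument along a dyadic sequence $\varepsilon_k \downarrow 0$ (using higher moments for summability), and Fatou's lemma then transfers the moment bound above to $Z_{\B{x}}$. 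The main obstacle I anticipate is the careful bookkeeping of the null set of degenerate $\B{t}$-tuples, together with the reconciliation of the $L^2$-limit interpretation with the pointwise definition of local time; this is exactly the point where the arguments of \cite{Pitt78} are imported.
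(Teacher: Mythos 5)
Your route is essentially the paper's: the same $\varepsilon$-regularization $Z^{\varepsilon}_{\B{x}}$, the same key estimate (a centered Gaussian density is maximized at the origin, giving the $\varepsilon$- and $\B{x}$-uniform dominating function $\U{K}_n^{\B{X}}$, integrable by $\mathfrak{A}_1$), the same dominated convergence, and the same Cauchy-in-$L^p$ mechanism; the paper simply runs the Cauchy argument in $L^n(\Omega,\mathbb{P})$ for every $n$ at once, by expanding $\mathbb{E}\bigl[(Z_\varepsilon-Z_\delta)^n\bigr]$ over the $2^n$ choices of radii and using that the alternating signs sum to zero.

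There is one genuine gap as written: the equality $\mathbb{E}(Z_{\B{0}}^n)=\int_{(\mathcal{I}^N)^n}\U{K}_n^{\B{X}}$. Fatou's lemma, which is all you invoke to transfer moments to the limit, yields only $\mathbb{E}(Z_{\B{0}}^n)\leq \liminf_{\varepsilon\rightarrow 0}\mathbb{E}\bigl[(Z^{\varepsilon}_{\B{0}})^n\bigr]$, i.e.\ the inequality half of the statement; it cannot give the reverse inequality needed at $\B{x}=\B{0}$. You need genuine convergence of the $n$-th moments, and both standard fixes are within your reach: either upgrade your $L^2$-Cauchy computation to $L^n$-Cauchy for every $n$ (this is exactly the paper's proof, and your three-term expansion of the square generalizes verbatim to the $2^n$-term expansion of the $n$-th power), or note that $\sup_{\varepsilon}\mathbb{E}\bigl[(Z^{\varepsilon}_{\B{0}})^{2n}\bigr]\leq \int \U{K}_{2n}^{\B{X}}<+\infty$ by $\mathfrak{A}_1$ (which holds for all orders), so $\{(Z^{\varepsilon}_{\B{0}})^n\}_{\varepsilon}$ is uniformly integrable and convergence in probability already forces convergence of moments. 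A lesser caveat: your Borel--Cantelli step along a dyadic sequence needs either a quantitative rate for $\mathbb{E}\bigl[(Z^{\varepsilon_k}_{\B{x}}-Z_{\B{x}})^2\bigr]$ or a sequence of radii whose consecutive volume ratios tend to $1$ in order to interpolate between scales (consecutive dyadic radii give ratio $2^d$); the paper is equally brisk in identifying its $L^n$-limit with the pointwise-defined local time, so this is a shared, minor looseness rather than a defect specific to your argument.
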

\end{remark}
\begin{proof}
We prove the proposition for $\B{x}=\B{0}$. The proof for the general $\B{x}$ is similar. For the rest of the proof, we denote $Z_{\B{0}}$ simply by $Z$. Let $\|\cdot\|$ be some arbitrary norm on $\mathbb{R}^d$ and define $Z_\varepsilon:=\int_{\mathcal{I}^N}\frac{1}{V_\varepsilon}
\mathbf{1}_{\{\|\B{X}_{\B{t}}\|<\varepsilon\}}(\B{t})\mathrm{d}\B{t}$, where $V_\varepsilon$ denotes the volume of the $d$-dimensional ball $\{x\in\mathbb{R}^d ; \|x\|<\varepsilon\}$, and $\mathbf{1}_{\{\cdot\}}$ denotes the indicator function. First we show that $\{Z_\varepsilon\}$ is cauchy in $\mathrm{L}^n(\Omega, \mathbb{P})$. Indeed, let $\mathfrak{S}$ be the set of all possible functions from $\{1, \cdots, n\}$ into $\{0,1\}$. For any function $\sigma\in \mathfrak{S}$, we define $\xi_i^\sigma$ to be equal to $\varepsilon$ if $\sigma(i)=0$ and be equal to $\delta$ if $\sigma(i)=1$. It is then easy to verify the following equality
$$
\mathbb{E}\bigl[(Z_\varepsilon-Z_\delta)^n\bigr]=
\int_{\prod_{i=1}^kI^N} \sum_{\sigma\in \mathfrak{S}} \frac{(-1)^{\sum_{i=1}^n\sigma(i)}}{\Pi_{i=1}^n V_{\xi_i^\sigma}}
\mathbb{P}\bigl(\bigcap_{i=1}^n \{|\B{X}_{\B{t}_i}\|<\xi_i^\sigma\})
\mathrm{d}\B{t}_1\cdots \mathrm{d}\B{t}_n.
$$
As the variables $\{\B{X}_{\B{t}}\}_{{\B{t}}}$ are jointly normal with mean zero, for every $\sigma\in \mathfrak{S}$ and each fixed $\B{t}_1, \cdots, \B{t}_n$, we have
$$
\frac{1}{\Pi_{i=1}^n V_{\xi_i^\sigma}}
\mathbb{P}\bigl(\|\B{X}_{\B{t}_1}\|<\xi_1^\sigma , \ldots, \|\B{X}_{\B{t}_n}\|<\xi_n^\sigma\bigr)\leq
\U{K}_n^{\B{X}}(\B{t}_1, \cdots, \B{t}_n)
$$
and
$$
\frac{1}{\Pi_{i=1}^n V_{\xi_i^\sigma}}
\mathbb{P}\bigl(\|\B{X}_{\B{t}_1}\|<\xi_1^\sigma , \ldots, \|\B{X}_{\B{t}_n}\|<\xi_n^\sigma\bigr)
\overset{\varepsilon, \delta\downarrow 0}{\longrightarrow}
\U{K}_n^{\B{X}}(\B{t}_1, \cdots, \B{t}_n)
$$
Noting that $\sum_{\sigma\in \mathfrak{S}}(-1)^{\sum_{i=1}^n\sigma(i)}=(1-1)^n=0$, by dominated convergence and Property $\mathfrak{A}_1$ we have
$$
\mathbb{E}\bigl[(Z_\varepsilon-Z_\delta)^n\bigr]\overset{\varepsilon, \delta\downarrow 0}{\longrightarrow} 0,
$$
which proves that $Z_\varepsilon$ is $\mathrm{L}^n(\Omega, \mathbb{P})$-Cauchy whose limit is noting but $Z$. Similarly one can also show that
$$
\mathbb{E}(Z^n)=\lim_{\varepsilon\rightarrow 0}\mathbb{E}(Z_\varepsilon^n)=\int_{\prod_{i=1}^n \mathcal{I}^N} \U{K}_n^{\B{X}}(\B{t}_1, \cdots, \B{t}_n)\, \mathrm{d}\B{t}_1\cdots \mathrm{d}\B{t}_n.
$$
\end{proof}


\section{Upper bound}\label{Section on Upper bounds}
In this section we prove Propositions \ref{Equivalence of self-similar functions}, \ref{parameters restrictions}, Lemma \ref{local nondeterminism theorem}, and finally Proposition \ref{Upper bound for intersection local times}.

\begin{proof}[Proof of Proposition \ref{Equivalence of self-similar functions}]
First we normalize the vector $\B{\alpha}=(\alpha_1, \alpha_2, \cdots, \alpha_N)$ and $H$ so that for every $i$ we have $\alpha_i\geq 1$. This is possible because $f(\B{x}\circ \omega^{\B{\alpha}/p})=\omega^{H/p} f(\B{x})$ for every $p>0$ as well. This turns the $\B{\alpha}$-distance $d(\B{x},\B{y}):=\|\B{x}-\B{y}\|_{\B{\alpha}}$ into a translation-invariant metric on $\mathbb{R}^N$.\\
i) We denote the $\ell^2$-norm on $\mathbb{R}^N$ by $\|\cdot\|_2$, and take the standard definition for the $\B{\alpha}$ and $\ell^2$-balls centered at the origin of radius $r>0$ as $B_{\B{\alpha}}(\B{0},r):=\{\B{x}\in\mathbb{R}^N :\,\|\B{x}\|_{\B{\alpha}}<r\}$ and $B_{2}(\B{0},r):=\{\B{x}\in\mathbb{R}^N :\,\|\B{x}\|_{2}<r\}$. It is easy to verify that every $\B{\alpha}$-ball centered at the origin contains a $\ell^2$-ball centered at the origin, and vice versa. This shows that the ${\B{\alpha}}$-metric induces the same topology on $\mathbb{R}^N$ as the standard topology induced by the $\ell^2$-norm. In particular it means that the function $f$ is also continuous under the ${\B{\alpha}}$-metric. So there exists $\varepsilon>0$ such that for every $\B{x}$ we have $|f(\B{x})|<1$ if $\|\B{x}\|_{\B{\alpha}}<\varepsilon$. Now for any $\B{x}\in \mathbb{R}^N$, choose $\omega>0$ such that $\|\B{x}\circ \omega^{\B{\alpha}}\|_{\B{\alpha}}=\omega \|\B{x}\|_{\B{\alpha}}=\varepsilon/2$. It follows that $|f(\B{x}\circ \omega^{\B{\alpha}})|<1$ which means $|f(\B{x})|<\frac{2^H}{\varepsilon^H}\|\B{x}\|_{\B{\alpha}}^H$.\\
ii) As the set $\mathcal{S}_{\B{\alpha}}:=\{\B{x}\in \mathbb{R}^N: \; \|\B{x}\|_{\B{\alpha}}=1\}$ is compact, the image of the continuous function $f$ achieves a minimum over $\mathcal{S}_{\B{\alpha}}$ which is strictly positive. In other words, there exists $\delta>0$ such that for every $\B{x}\in \mathcal{S}_{\B{\alpha}}$ we have $f(\B{x})\geq \delta$. Now for a general $\B{x}\in \mathbb{R}^N$, we can choose $\omega>0$ such that $\|\B{x}\circ \omega^{\B{\alpha}}\|_{\B{\alpha}}=1$. So we have $|f(\B{x}\circ \omega^{\B{\alpha}})|\geq \delta$, hence $|f(\B{x})|\geq \delta \|\B{x}\|_{\B{\alpha}}^H$.
\end{proof}

\begin{proof}[Proof of Proposition \ref{parameters restrictions}]
We prove it for the case of $n=1$. The proof for larger $n$'s is similar.\\
Let the surface $\mathcal{S}_{\B{\alpha}}^+\subset\mathbb{R}^N$ be defined as follows
$$
\mathcal{S}_{\B{\alpha}}^+:=\{(x_1, x_2, \cdots, x_N)\in \mathbb{R}_{+}^N: \; \sum_{i=1}^{N} \sqrt[\alpha_i]{x_i}=1\}.
$$
By diagonal self-similarity (Property $\mathfrak{A}_3$) and noting Remark \ref{diagonal self-similarity covariance formulation}, for any
$\B{t}\in \mathbb{R}^N$ and any $\omega>0$ we have
$$
\detcov (\B{X}_{\omega^{\B{\alpha}}\circ\B{t}})=\det (\omega^{\BU{H}})\,
\detcov(\B{X}_{\B{t}})\,
\det(\omega^{\BU{H}^\dagger})=
\omega^{2\textrm{tr}(\BU{H})}\,\detcov(\B{X}_{\B{t}}), $$
where we used the fact that the determinant of the exponential of a matrix equals the exponential of its trace, i.e., $\det(e^{\BU{H}})=e^{\textrm{tr}(\BU{H})}$ (see e.g. \cite[ch.2]{Hall2003}). Suppose $\B{s}\mapsto\B{\sigma}_{\B{s}}$ is an arbitrary parametrization of the surface $\mathcal{S}_{\B{\alpha}}^+$, where $\B{\sigma}_{\B{s}}=(\sigma_1(\B{s}), \cdots,\sigma_N(\B{s}))$ and $\B{s}=(s_1, \cdots, s_{N-1})$. Then using the change of variables $(\omega,\B{s})$ with $\B{t}=\omega^{\B{\alpha}}\circ\B{\sigma}_{\B{s}}$, we have
$$
\begin{aligned}
\int_{[0,1]^N} \bigl(\U{K}_1(\B{t})\bigr)^{\beta}\, \mathrm{d}\B{t}&\geq
\int_{0}^1 \int_{\B{\sigma}^{-1}(\mathcal{S}_{\B{\alpha}}^+)} \omega^{\sum_{i=1}^N\alpha_i-1}
\bigl(\U{K}_1(\omega^{\B{\alpha}}\circ\B{\sigma}_{\B{s}})\bigr)^{\beta}\, J_\sigma(\B{s}) \mathrm{d}\B{s}\,\mathrm{d}\omega\\
&=\int_{0}^1 \frac{\omega^{\sum_{i=1}^N\alpha_i-1}}{\omega^{\beta\, \textrm{tr}(\BU{H})}}\mathrm{d}\omega
\int_{\B{\sigma}^{-1}(\mathcal{S}_{\B{\alpha}}^+)} \bigl(\U{K}_1(\B{\sigma}_{\B{s}})\bigr)^{\beta}\, J_\sigma(\B{s})\mathrm{d}\B{s},
\end{aligned}
$$
where $J_\sigma(\B{s})$ is the absolute value of the following determinant
\begin{equation}\label{J_sigma}
\begin{vmatrix}
  \alpha_1 \sigma_1 & \frac{\partial \sigma_1}{\partial s_1} & \frac{\partial \sigma_1}{\partial s_2} & \dots & \frac{\partial \sigma_1}{\partial s_{N-1}} \\
  \vdots & \vdots & \vdots& \dots & \vdots \\
  \alpha_N \sigma_N & \frac{\partial \sigma_N}{\partial s_1} & \frac{\partial \sigma_N}{\partial s_2}& \dots & \frac{\partial \sigma_N}{\partial s_{N-1}}
\end{vmatrix}.
\end{equation}
%
This implies that if $\bigl(\U{K}_1(\B{t})\bigr)^{\beta}$ is integrable over the cube $[0,1]^N$, then the following inequality has to hold true
$$
\sum_{i=1}^N\alpha_i> \beta\, \textrm{tr}(\BU{H}).
$$
\end{proof}

Now we turn to the proof of Lemma \ref{local nondeterminism theorem}. First we need a definition
\begin{dfn}\label{narrowing definition}
Consider $n\in\mathbb{N}$, and a sequence of vectors $\B{t}_1$, ..., $\B{t}_n\in\mathbb{R}^N$, and let $d$ be a metric on $\mathbb{R}^N$. We say that the sequence is `narrowing' with respect to the metric $d$, if for any $k=1, \cdots, n-1$ we have
$$
d(\B{t}_{k+1}, \B{t}_k)=\min_{i=1,\cdots, k}d(\B{t}_{k+1},\B{t}_{i}).
$$
\end{dfn}
\begin{remark}\label{arranging into narrowing}
For any fixed metric $d$, every finite subset $\mathcal{F}\subset\mathbb{R}^N$ of $n$ points can be arranged in such a way that the ordered sequence is narrowing with respect to the metric $d$. Indeed, the procedure is simple: start with an arbitrary point in $\mathcal{F}$ and label it $\B{t}_{n}$. Having chosen $\B{t}_{n}$, $\B{t}_{n-1}$, ..., $\B{t}_{k}$, choose among the remaining  points, i.e., from $\mathcal{F}\setminus\{\B{t}_{n}, \B{t}_{n-1}, ...,\B{t}_{k}\}$, the one which is the closest to $\B{t}_{k}$ and label it $\B{t}_{k-1}$.
\end{remark}

We will need a theorem from \cite{Tassiulas1997} which relates our problem to the nearest neighbor strategy for solving the travelling salesman problem. We proceed with some preliminary definitions from \cite{Tassiulas1997}.

For a given metric $d$ on $\mathbb{R}^N$, the diameter of a subset $\mathcal{C}\subseteq\mathbb{R}^N$ is defined as
$$
D_d(\mathcal{C}):=\sup_{\B{x}, \B{y}\in \mathcal{C}} d(\B{x}, \B{y}).
$$
A family $\mathcal{P}=\{\mathcal{C}_l\}_{l=1}^{P}$ of subsets of $\mathbb{R}^N$ is called a covering of subset $\mathcal{A}\subset\mathbb{R}^N$ if $\mathcal{A}\subseteq\bigcup_{l=1}^{P}\mathcal{C}_l$. The diameter of a covering $\mathcal{P}$, denoted by $D_d(\mathcal{P})$ is defined as the maximum diameter of its elements, i.e.
$$
D_d(\mathcal{P})=\max_{l=1,\cdots,P} D_d(\mathcal{C}_l).
$$
For any $n$ distinct points in $\mathbb{R}^N$, an arrangement (ordering) $\B{t}_{1}$, $\B{t}_{2}$, ..., $\B{t}_{n}$ of the points is a `nearest neighbor tour' if it satisfies the following property
$$
d(\B{t}_{k+1}, \B{t}_{k})=\min_{i=k+1, \cdots, n} d(\B{t}_{k}, \B{t}_{i})\;: \quad \forall k=1,\cdots, n-1.
$$
For every nearest neighbor tour $\mathfrak{T}$ of $n$ points, one defines its (loop) length as
$$
\mathbb{L}_d(\mathfrak{T}):=\sum_{k=1}^{n}d(\B{t}_{k+1}, \B{t}_{k}),
$$
where $\B{t}_{n+1}:=\B{0}$.
We are interested in finding a general upper bound on this length when all the points of the tour are required to lie inside the unit cube $\mathcal{I}^N$. Indeed, the `worst case length' of $n$-point nearest neighbor (NN) tours over a subset $\mathcal{A}\subset\mathbb{R}^N$ is defined as follows
$$
\mathbb{L}_d(n;\mathcal{A}):=\sup_{\substack{\mathcal{F}\subset\mathcal{A}\\|\mathcal{F}|=n}}
\max_{\mathfrak{T}\in \textrm{NN}(\mathcal{F})} \mathbb{L}_d(\mathfrak{T}),
$$
where $|\mathcal{F}|$ denotes the cardinality of the set $\mathcal{F}$, and $\textrm{NN}(\mathcal{F})$ is the set of all possible nearest neighbor tours of the points of $\mathcal{F}$.
Then we have the following theorem from \cite{Tassiulas1997}.
\begin{thm}\label{nearest neighbor covering bound}
Let $d$ be a metric on $\mathbb{R}^N$, and $\mathcal{A}\subset\mathbb{R}^N$. Then for any sequence of coverings $\{\mathcal{P}_m\}_{m=1}^M$ of $\mathcal{A}$ with decreasing diameters, i.e. $D_d(\mathcal{P}_m)\geq D_d(\mathcal{P}_{m+1})$ for every $m=1,\cdots, M-1$. Then the worst case length of $n$-point nearest neighbor tours is bounded as follows
$$
\mathbb{L}_d(n;\mathcal{A})\leq nD_d(\mathcal{P}_M)+\sum_{m=2}^M |\mathcal{P}_m|\bigl(D_d(\mathcal{P}_{m-1})-D_d(\mathcal{P}_m)\bigr)+
|\mathcal{P}_1|\bigl(D_d(\mathcal{A})-D_d(\mathcal{P}_1)\bigr),
$$
where $|\mathcal{P}_m|$ denotes the cardinality of $\mathcal{P}_m$.
\end{thm}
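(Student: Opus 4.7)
The plan is to deduce the inequality from two ingredients: a layer-cake upper estimate for each individual edge length of a NN tour, and a pigeonhole-type counting lemma, based on the nearest-neighbor property, that controls how many edges can exceed each threshold $D_d(\mathcal{P}_m)$. Throughout, abbreviate $D_m := D_d(\mathcal{P}_m)$ and $D_0 := D_d(\mathcal{A})$. For a fixed $n$-point subset $\mathcal{F} \subset \mathcal{A}$ equipped with a NN ordering $\B{t}_1, \ldots, \B{t}_n$, write $\ell_k := d(\B{t}_{k+1}, \B{t}_k)$ with $\B{t}_{n+1} := \B{0}$.

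The first step is the telescoping bound
$$\ell_k \;\leq\; D_M \;+\; \sum_{m=2}^{M}(D_{m-1}-D_m)\,\mathbf{1}_{\{\ell_k > D_m\}} \;+\; (D_0 - D_1)\,\mathbf{1}_{\{\ell_k > D_1\}},$$
which holds by a routine case-check on the unique index $m$ such that $D_m < \ell_k \leq D_{m-1}$ (or on whether $\ell_k \leq D_M$, resp.\ $\ell_k > D_1$). Summing over $k = 1, \ldots, n$ yields
$$\mathbb{L}_d(\mathfrak{T}) \;\leq\; n\,D_M \;+\; \sum_{m=2}^{M}(D_{m-1}-D_m)\,|A_m| \;+\; (D_0 - D_1)\,|A_1|,$$
where $A_m := \{k \in \{1, \ldots, n\} :\, \ell_k > D_m\}$. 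The theorem will then follow by taking the supremum over $\mathcal{F}$ and $\mathfrak{T}$, provided we establish the key estimate $|A_m| \leq |\mathcal{P}_m|$ for every $m$.

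The counting estimate is the heart of the argument and is where the nearest-neighbor property enters. For each $k \in A_m$ pick any element $\mathcal{C}_k \in \mathcal{P}_m$ containing $\B{t}_k$ (possible since $\mathcal{P}_m$ covers $\mathcal{A}$). I claim that $k \mapsto \mathcal{C}_k$ is injective on $A_m$. Indeed, for any $k \in A_m$ with $k < n$, the nearest-neighbor property yields $d(\B{t}_j, \B{t}_k) \geq \ell_k > D_m \geq D_d(\mathcal{C}_k)$ for every $j > k$, which forces $\B{t}_j \notin \mathcal{C}_k$. Hence $\mathcal{C}_k \neq \mathcal{C}_{k'}$ for any $k < k'$ both lying in $A_m$, and the case $k = n$ is vacuous since there is no larger index. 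Injectivity of the assignment gives $|A_m| \leq |\mathcal{P}_m|$, which substituted into the previous display completes the proof.

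Conceptually nothing beyond pigeonhole is needed; the only subtle point is the injectivity step, and the potential pitfall is that a point $\B{t}_k$ may well lie in several elements of $\mathcal{P}_m$ simultaneously. This causes no harm, since any choice of $\mathcal{C}_k$ works and the NN inequality still excludes every later point of the tour from the chosen element.
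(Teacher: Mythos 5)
Your proof is correct, but note that the paper itself contains no proof of this statement: Theorem \ref{nearest neighbor covering bound} is quoted from \cite{Tassiulas1997}, with only the remark that the argument given there for norm-induced metrics carries over to a general metric. So there is no in-paper proof to compare against line by line; what you have written is essentially the covering/pigeonhole argument underlying the cited result: a telescoping (layer-cake) bound on each edge length, combined with the observation that the nearest-neighbor property prevents two tour points whose outgoing edges exceed $D_d(\mathcal{P}_m)$ from lying in a common element of $\mathcal{P}_m$, so at most $|\mathcal{P}_m|$ edges can exceed $D_d(\mathcal{P}_m)$. Your injectivity step is the right one, and it is applied correctly: for a pair $k<k'$ the nearest-neighbor inequality is invoked at the smaller index $k$, so the closing edge ($k=n$) causes no difficulty in the count.

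Two small points that you glide over, both artifacts of how the theorem is transcribed in the paper rather than genuine gaps in your reasoning. First, in the case $\ell_k > D_d(\mathcal{P}_1)$ your telescoping bound amounts to $\ell_k \le D_d(\mathcal{A})$; for $k\le n-1$ this is clear since both endpoints lie in $\mathcal{A}$, but for the closing edge $\ell_n=d(\B{0},\B{t}_n)$ it requires the closing point $\B{0}$ (or $\B{t}_1$, in the closed-tour convention of \cite{Tassiulas1997}) to lie in $\mathcal{A}$ --- which holds in the only application made here, $\mathcal{A}=\mathcal{I}^N$. Second, replacing $|A_1|$ by $|\mathcal{P}_1|$ in the final estimate uses $D_d(\mathcal{P}_1)\le D_d(\mathcal{A})$, which is not literally forced by the stated hypotheses (covering elements need not be subsets of $\mathcal{A}$) but is again satisfied in the application. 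With these two assumptions made explicit, your argument is complete and self-contained.
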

We should mention that this theorem has been stated in \cite{Tassiulas1997} only for the case where the metric $d$ comes from a norm on $\mathbb{R}^N$. Nevertheless, with a careful examination of their proof one can verify that their arguments work even when $d$ is a general metric on $\mathbb{R}^N$.

Now we are ready to prove the following theorem.
\begin{thm}\label{nearest neigbor bound theorem}
Let $N\in\mathbb{N}^{\geq 2}$ and $\B{\alpha}=(\alpha_1, \alpha_2, \cdots, \alpha_N)\in [1,\infty)^N$. Then the worst case length of nearest neighbor tours of $n$-point subsets of $[0,1]^N$ with respect to the metric $\|\cdot\|_{\B{\alpha}}$ defined in
\eqref{alpha distance}, is bounded as follows
$$
\mathbb{L}_{\B{\alpha}}(n,[0,1]^N)\leq c_{\B{\alpha}}\, n^{1-\frac{1}{\sum_{i=1}^{N}\alpha_i}},
$$
where $c_{\B{\alpha}}$ is a constant that depends only on $\B{\alpha}$.
\end{thm}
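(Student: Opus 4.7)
The plan is to apply Theorem \ref{nearest neighbor covering bound} with a geometric sequence of coverings of $[0,1]^N$ whose $\B{\alpha}$-diameters are halved at each step. The key observation is that, thanks to the quasi-self-similarity built into the $\B{\alpha}$-metric, the covering number at scale $\rho$ grows like $\rho^{-S}$ where $S := \sum_{i=1}^N \alpha_i$. Since the hypothesis $\B{\alpha}\in[1,\infty)^N$ together with $N\geq 2$ forces $S\geq 2 > 1$, the relevant geometric sum can be summed to the claimed order $n^{1-1/S}$.

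First I would construct, for each scale $\rho\in(0,N]$, a covering $\mathcal{P}(\rho)$ of $[0,1]^N$ by axis-aligned boxes with side length $r_i := (\rho/N)^{\alpha_i}$ along the $i$-th coordinate, rounding up the number of boxes per axis to the next integer. Any such box has $\B{\alpha}$-diameter
$$
\sum_{i=1}^N r_i^{1/\alpha_i} \;=\; \sum_{i=1}^N \frac{\rho}{N} \;=\; \rho,
$$
and since $\alpha_i\geq 1$ and $\rho\leq N$ imply $r_i\leq 1$, the cardinality satisfies
$$
|\mathcal{P}(\rho)| \;\leq\; \prod_{i=1}^N \bigl\lceil (N/\rho)^{\alpha_i}\bigr\rceil \;\leq\; 2^N N^S\,\rho^{-S}.
$$

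Next I would set $\rho_m := 2^{-(m-1)} N$ and $\mathcal{P}_m := \mathcal{P}(\rho_m)$ for $m = 1,2,\ldots,M$, where $M$ is the least positive integer with $\rho_M \leq n^{-1/S}$. Because $\rho_1 = N = D_{\B{\alpha}}([0,1]^N)$, the third term of Theorem \ref{nearest neighbor covering bound} vanishes. The first term is bounded by $n\rho_M \leq 2\,n^{1-1/S}$, and using $\rho_{m-1}-\rho_m = \rho_m$ the middle sum reduces to
$$
\sum_{m=2}^M |\mathcal{P}_m|\,(\rho_{m-1}-\rho_m) \;\leq\; 2^N N^S \sum_{m=2}^M \rho_m^{\,1-S}.
$$
Since $1-S < 0$, the sequence $\{\rho_m^{\,1-S}\}$ is geometric with common ratio $2^{S-1}>1$, so the sum is dominated by a constant multiple (depending only on $S$) of its largest term $\rho_M^{\,1-S}$, which is itself of order $n^{(S-1)/S} = n^{1-1/S}$. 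Adding the three contributions yields the desired bound $\mathbb{L}_{\B{\alpha}}(n,[0,1]^N) \leq c_{\B{\alpha}}\, n^{1-1/S}$.

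The main difficulty is combinatorial bookkeeping rather than hard analysis: one must verify that the axis-aligned boxes really have $\B{\alpha}$-diameter at most $\rho_m$ (which is clean because $\|\cdot\|_{\B{\alpha}}$ is additive across coordinates), that the rounding-up in the counting inflates the constant by at most the factor $2^N$, and that the scale $\rho_M$ is chosen finely enough to make the first term of Theorem \ref{nearest neighbor covering bound} match the order of the middle sum. Once the covering bound $|\mathcal{P}(\rho)| \lesssim \rho^{-S}$ is in hand, the convergence of the geometric series---guaranteed by $S \geq N \geq 2 > 1$---does all the remaining work.
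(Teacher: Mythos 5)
Your proof is correct and follows essentially the same route as the paper: the same Tassiulas covering bound (Theorem \ref{nearest neighbor covering bound}) applied to the same family of anisotropic boxes with side lengths scaling like $\rho^{\alpha_i}$, balanced at the scale $\rho\approx n^{-1/\sum_i\alpha_i}$. The only difference is that you discretize the scales dyadically and sum a geometric series, whereas the paper uses the harmonic sequence $D(\mathcal{P}_m)=N/m$ and compares the sum to an integral; both yield the bound $c_{\B{\alpha}}\,n^{1-\frac{1}{\sum_i\alpha_i}}$.
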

\begin{proof}
For every $m\in\mathbb{N}$, we define the set of points $\mathcal{G}_m\subset [0,1]^N$ as
$$
\mathcal{G}_m:=\Bigl\{\,(\frac{\ell_1}{m^{\alpha_1}}, \frac{\ell_2}{m^{\alpha_2}}, \cdots, \frac{\ell_N}{m^{\alpha_N}})\,\Big|\, \ell_i\in \{0, 1, \cdots, \lceil m^{\alpha_i}\rceil-1\}:\, \forall i=1, 2, \cdots, N\Bigr\},
$$
where $\lceil\cdot\rceil$ denotes the ceiling function, i.e. $\lceil x\rceil$ is the smallest integer that is larger than or equal to $x$.
For every $\B{p}\in\mathcal{G}_m$, we define the sub-cube $\mathcal{C}(\B{p},m^{-\B{\alpha}}) $ as usual, i.e.
$$
\mathcal{C}(\B{p},m^{-\B{\alpha}}) :=
\B{p}+\prod_{i=1}^{N}[0,\frac{1}{m^{\alpha_i}}].
$$
So for any positive integer $m$ we define the following covering of the set $[0,1]^N$
$$
\mathcal{P}_m:=\{\mathcal{C}(\B{p},m^{-\B{\alpha}})\;: \quad\B{p}\in \mathcal{G}_m\}.
$$
We note that for every $m$ we have
$$
D(\mathcal{P}_m)=\frac{N}{m},
\qquad \text{and} \qquad
|\mathcal{P}_m|=\prod_{i=1}^{N}\lceil m^{\alpha_i}\rceil.
$$
So by Theorem \ref{nearest neighbor covering bound}, for every $M\in\mathbb{N}$ we have
$$
\begin{aligned}
\mathbb{L}_{\B{\alpha}}(n,[0,1]^N)
&
\leq \frac{n N}{M}+N \sum_{m=2}^{M}(\frac{1}{m-1}-\frac{1}{m})\prod_{i=1}^{N}\lceil m^{\alpha_i}\rceil
\\&
\leq
\frac{n N}{M}+N 2^{N+1}\sum_{m=2}^{M} m^{\sum_{i=1}^N\alpha_i-2}
\end{aligned}
$$
Due to the assumption, we have $\sum_{i=1}^N\alpha_i\geq 2$, and hence
$$
\begin{aligned}
\mathbb{L}_{\B{\alpha}}(n,[0,1]^N)
\leq
\frac{n N}{M}+N 2^{N+1}\int_{1}^{M} m^{\sum_{i=1}^N\alpha_i-2}
\leq
\frac{n N}{M}+\frac{N 2^{N+1}}{\sum_{i=1}^N\alpha_i-1} M^{\sum_{i=1}^N\alpha_i-1}.
\end{aligned}
$$
If we choose $M:=n^{\frac{1}{\sum_{i=1}^N\alpha_i}}$ we get the desired bound.
\end{proof}

So we are ready to prove Lemma \ref{local nondeterminism theorem}.

\begin{proof}[Proof of Lemma \ref{local nondeterminism theorem}]
Choose $\B{\alpha}=(\alpha_1, \alpha_2, \cdots, \alpha_N)\in\mathbb{R}^N_+$ and $H>0$ such that for every $i=1, \cdots, N$ we have $\alpha_i\geq 1$ and $\alpha_i=H\xi_i$. First we prove the theorem for the case of $N\geq 2$.
We define $\mathcal{N}_{n, \B{\alpha}}$ as the set of all $nN$-tuples $(\B{t}_{1}, \B{t}_{2}, \cdots, \B{t}_{n})\in \mathcal{I}^N\times\cdots\times\mathcal{I}^N$ such that the sequence $\B{t}_{1}$, $\B{t}_{2}$, ..., $\B{t}_{n}$ is narrowing with respect to the $\B{\alpha}$-distance defined by $\|\cdot\|_{\B{\alpha}}$ in Equation \eqref{alpha distance}. Then by Remark \ref{arranging into narrowing}, and the fact that
$\U{K}_n^{\B{X}}(\B{t}_1,\B{t}_2, \cdots, \B{t}_n)$ is symmetric with respect to its arguments, i.e. it is permutation-invariant, we have
\begin{equation}\label{permutation sum}
\int_{(\mathcal{I}^N)^n}\big(\U{K}_n^{\B{X}}(\B{t}_1, \cdots, \B{t}_n)\bigr)^{\beta}\mathrm{d}\B{t}_1\cdots\mathrm{d}\B{t}_n\leq
n! \int_{\mathcal{N}_{n, \B{\alpha}}}\big(\U{K}_n^{\B{X}}(\B{t}_1, \cdots, \B{t}_n)\bigr)^{\beta}\mathrm{d}\B{t}_1\cdots\mathrm{d}\B{t}_n.
\end{equation}
By strong local nondeterminism, for any narrowing sequence $\B{t}_1$,$\B{t}_2$, ..., $\B{t}_n$ we have
$$
\detcov[\B{X}_{\B{t}_k}|\B{X}_{\B{t}_1}, \B{X}_{\B{t}_2},\cdots, \B{X}_{\B{t}_{k-1}}]\geq C \min\{
\|\B{t}_k-\B{t}_{k-1}\|_{\B{\alpha}}^{2H},
\|\B{t}_k\|_{\B{\alpha}}^{2H}\}\;:\quad \forall k=2, \cdots,n.
$$
So by Proposition \ref{Inequality for detCov} we have
\begin{equation}\label{bounding the kernel by strong nondeterminism}
\U{K}_n^{\B{X}}(\B{t}_1, \cdots, \B{t}_n)\leq c_1^{n}\prod_{k=1}^{n}
\frac{1}{\min\{
\|\B{t}_k-\B{t}_{k-1}\|_{\B{\alpha}}^H,
\|\B{t}_k\|_{\B{\alpha}}^H\}}
\end{equation}
where $c_1:=\frac{1}{\sqrt{C(2\pi)^d}}$.

On the other hand, it is easy to verify that if a sequence $\B{t}_{1}$, $\B{t}_{2}$, ..., $\B{t}_{n}$ is narrowing, then its reversal, i.e. $\B{t}_{n}$, $\B{t}_{n-1}$, ..., $\B{t}_{1}$ satisfies the nearest neighbor property. Hence by Theorem \ref{nearest neigbor bound theorem}, for any $(\B{t}_{1}, \B{t}_{2}, \cdots, \B{t}_{n})\in \mathcal{N}_{n, \B{\alpha}}$, we have
$$
\sum_{k=2}^{n}\|\B{t}_k-\B{t}_{k-1}\|_{\B{\alpha}}\leq \mathbb{L}_{\B{\alpha}}(n; \mathcal{I}^N)\leq c_{\B{\alpha}} n^{1-\frac{1}{\sum_{i=1}^{N}\alpha_i}},
$$
so
\begin{equation}\label{linear in n upper bound on the exponent}
\sum_{k=1}^{n}k^{\frac{1}{\sum_{i=1}^{N}\alpha_i}}\|\B{t}_k-\B{t}_{k-1}\|_{\B{\alpha}}\leq c_2 \, n,
\end{equation}
where $c_2$ is a constant that only depends on $N$, $\B{\alpha}$ and $d$. So \eqref{linear in n upper bound on the exponent} and \eqref{bounding the kernel by strong nondeterminism} imply
$$
\begin{aligned}
&\int_{\mathcal{N}_{n, \B{\alpha}}}\big(\U{K}_n^{\B{X}}(\B{t}_1, \cdots, \B{t}_n)\bigr)^{\beta}\mathrm{d}\B{t}_1\cdots\mathrm{d}\B{t}_n\\
&\quad
\leq
e^{c_2\,n}
\int_{\mathcal{N}_{n, \B{\alpha}}}
c_1^{n\beta}\prod_{k=1}^{n}
\frac{\exp\bigl(-k^{\frac{1}{\sum_{i=1}^{N}\alpha_i}}\min\{
\|\B{t}_k-\B{t}_{k-1}\|_{\B{\alpha}},
\|\B{t}_k\|_{\B{\alpha}}\}\bigr)}
{\min\{
\|\B{t}_k-\B{t}_{k-1}\|_{\B{\alpha}}^{\beta H},
\|\B{t}_k\|_{\B{\alpha}}^{\beta H}\}}\mathrm{d}\B{t}_1\cdots\mathrm{d}\B{t}_n\\
&\quad
\leq
c_3^n
\int_{\mathbb{R}^{nN}}
\prod_{k=1}^{n}
\frac{\exp\bigl(-k^{\frac{1}{\sum_{i=1}^{N}\alpha_i}}\min\{
\|\B{t}_k-\B{t}_{k-1}\|_{\B{\alpha}},
\|\B{t}_k\|_{\B{\alpha}}\}\bigr)}
{\min\{
\|\B{t}_k-\B{t}_{k-1}\|_{\B{\alpha}}^{\beta H},
\|\B{t}_k\|_{\B{\alpha}}^{\beta H}\}}
\mathrm{d}\B{t}_1 \cdots \mathrm{d}\B{t}_n\\
&\quad
\leq
c_3^n
\int_{\mathbb{R}^{nN}}
\prod_{k=1}^{n}
\biggl(
\frac{\exp\bigl(-k^{\frac{1}{\sum_{i=1}^{N}\alpha_i}}
\|\B{t}_k-\B{t}_{k-1}\|_{\B{\alpha}}\bigr)}
{\|\B{t}_k-\B{t}_{k-1}\|_{\B{\alpha}}^{\beta H}}+
\frac{\exp\bigl(-k^{\frac{1}{\sum_{i=1}^{N}\alpha_i}}
\|\B{t}_k\|_{\B{\alpha}}\bigr)}
{\|\B{t}_k\|_{\B{\alpha}}^{\beta H}}\biggr)
\mathrm{d}\B{t}_1 \cdots \mathrm{d}\B{t}_n
\end{aligned}
$$
where $c_3:=e^{c_2}c_1^\beta$.
Let $\mathfrak{S}$ be the set of all possible functions from $\{1, \cdots, n\}$ into $\{0,1\}$, and for any function $\vartheta\in \mathfrak{S}$ and any $k\in\{1,\cdots, n\}$, define
$$
\B{y}_k^\vartheta=
\left\{
\begin{array}{lr}
\B{t}_k, & \text{if } \vartheta(k)=0\\
\B{t}_k-\B{t}_{k-1}, & \text{if } \vartheta(k)=1.
\end{array}\right.
$$
Then the last inequality can be written as follows
\begin{equation}\label{upperbound on integral of kernel}
\begin{aligned}
&\int_{\mathcal{N}_{n, \B{\alpha}}}\big(\U{K}_n^{\B{X}}(\B{t}_1, \cdots, \B{t}_n)\bigr)^{\beta}\mathrm{d}\B{t}_1\cdots\mathrm{d}\B{t}_n\\
&\quad \leq
c_3^n
\sum_{\vartheta\in \mathfrak{S}}
\int_{\mathbb{R}^{nN}}
\prod_{k=1}^{n}
\frac{\exp\bigl(-k^{\frac{1}{\sum_{i=1}^{N}\alpha_i}}
\|\B{y}_k^\vartheta\|_{\B{\alpha}}\bigr)}
{\|\B{y}_k^\vartheta\|_{\B{\alpha}}^{\beta H}}
\mathrm{d}\B{t}_1 \cdots \mathrm{d}\B{t}_n\\
&\quad =
c_3^n
\sum_{\vartheta\in \mathfrak{S}}
\int_{\mathbb{R}^{nN}}
\prod_{k=1}^{n}
\frac{\exp\bigl(-k^{\frac{1}{\sum_{i=1}^{N}\alpha_i}}
\|\B{y}_k^\vartheta\|_{\B{\alpha}}\bigr)}
{\|\B{y}_k^\vartheta\|_{\B{\alpha}}^{\beta H}}
\mathrm{d}\B{y}_1^\vartheta \cdots \mathrm{d}\B{y}_n^\vartheta\\
&\quad =
c_3^n
\sum_{\vartheta\in \mathfrak{S}}
\prod_{k=1}^{n}
\int_{\mathbb{R}^{N}}
\frac{\exp\bigl(-k^{\frac{1}{\sum_{i=1}^{N}\alpha_i}}
\|\B{y}\|_{\B{\alpha}}\bigr)}
{\|\B{y}\|_{\B{\alpha}}^{\beta H}}
\mathrm{d}\B{y}.
\end{aligned}
\end{equation}
Define the surface $\mathcal{S}_{\B{\alpha}}\subset\mathbb{R}^N$ as follows
$$
\mathcal{S}_{\B{\alpha}}:=\{(x_1, x_2, \cdots, x_N)\in \mathbb{R}^N: \; \sum_{i=1}^{N} \sqrt[\alpha_i]{|x_i|}=1\},
$$
and take an arbitrary parametrization $\B{s}\mapsto\B{\sigma}_{\B{s}}$   of $\mathcal{S}_{\B{\alpha}}$, where
$\B{\sigma}_{\B{s}}=(\sigma_1(\B{s}), \cdots,\sigma_N(\B{s}))$ and $\B{s}=(s_1, \cdots, s_{N-1})$. Then using the change of variables $(\omega,\B{s})$ with $\B{y}=r^{\B{\alpha}}\circ\B{\sigma}_{\B{s}}$, we have
\begin{equation}\label{passing to radial and factorizing k out}
\begin{aligned}
\int_{\mathbb{R}^{N}}
\frac{\exp\bigl(-k^{\frac{1}{\sum_{i=1}^{N}\alpha_i}}
\|\B{y}\|_{\B{\alpha}}\bigr)}
{\|\B{y}\|_{\B{\alpha}}^{\beta H}}
\mathrm{d}\B{y}
&=
|\mathcal{S}_{\B{\alpha}}^\circ \, |
\int_{0}^{+\infty}
\frac{\exp\bigl(-r k^{\frac{1}{\sum_{i=1}^{N}\alpha_i}}
\bigr) r^{\sum_{i=1}^{N}\alpha_i}}
{r^{\beta H+1}}
\mathrm{d}r\\
&=|\mathcal{S}_{\B{\alpha}}^\circ|
\frac{k^{\frac{\beta H}{\sum_{i=1}^{N}\alpha_i}}}{k}
\int_{0}^{+\infty}
\frac{e^{-r}}{r}
r^{\sum_{i=1}^{N}\alpha_i-\beta H}
\mathrm{d}r
\end{aligned}
\end{equation}
where
$$
|\mathcal{S}_{\B{\alpha}}^\circ|:=
\int_{\B{\sigma}^{-1}(\mathcal{S}_{\B{\alpha}})} J_\sigma(\B{s})\mathrm{d}\B{s},
$$
and $J_\sigma$ is the absolute value of the determinant given in Equation \eqref{J_sigma}.
%
It is important to note that the right-hand side of \eqref{passing to radial and factorizing k out} is finite only if $\sum_{i=1}^{N}\alpha_i>\beta H$. So by Equations \eqref{upperbound on integral of kernel} and \eqref{passing to radial and factorizing k out} we get
\begin{equation}\label{final upperbound on restricted integral of kernel}
\int_{\mathcal{N}_{n, \B{\alpha}}}\big(\U{K}_n^{\B{X}}(\B{t}_1, \cdots, \B{t}_n)\bigr)^{\beta}\mathrm{d}\B{t}_1\cdots\mathrm{d}\B{t}_n
\leq
\,
c_4^n \, \frac{(n!)^{\frac{\beta H}{\sum_{i=1}^{N}\alpha_i}}}{n!},
\end{equation}
where
$$
c_4:=2 c_3
|\mathcal{S}_{\B{\alpha}}^\circ|
\int_{0}^{+\infty}
\frac{e^{-r}}{r}
r^{\sum_{i=1}^{N}\alpha_i-\beta H}.
$$
So finally, plugging Equation \eqref{final upperbound on restricted integral of kernel} into Equation \eqref{permutation sum} we get
$$
\int_{(\mathcal{I}^N)^n}\big(\U{K}_n^{\B{X}}(\B{t}_1, \cdots, \B{t}_n)\bigr)^{\beta}\mathrm{d}\B{t}_1\cdots\mathrm{d}\B{t}_n\leq
c_4^n \, (n!)^{\frac{\beta H}{\sum_{i=1}^{N}\alpha_i}}.
$$
The proof for the case of $N=1$ is similar to the above proof, except for the fact that instead of arranging $\B{t}_i$'s based on the narrowing property, we order them by the natural ordering on $\mathbb{R}$. The rest of the proof remains basically the same.
\end{proof}


\begin{proof}[Proof of Theorem \ref{Upper bound for intersection local times}]
Let us define $\tilde{d}:=(m-1)d$, $\tilde{N}:=\sum_{i=1}^{m}N_i$, and the following vector
$$
\B{\Delta}_{\B{\tilde{t}}}:=\bigl(\B{X}_1(\B{t_1})-\B{X}_2(\B{t_2}), \B{X}_2(\B{t_2})-\B{X}_3(\B{t_3}),\cdots, \B{X}_{m-1}(\B{t_{m-1}})-\B{X}_m(\B{t_m})\bigr)\in\mathbb{R}^{\tilde{d}}.
$$
Evidently, $\B{\Delta}_{\B{\tilde{t}}}$ is a centered Gaussian $(\tilde{N},\tilde{d})$-field.

Take an arbitrary positive integer $n$, and consider any family of points $\{\B{t}_k^i\}_{i,k}$ where $k=1,\cdots,m$, $i=1,\cdots, n$, such that for every $i$ and $k$ we have $\B{t}_k^i\in\mathcal{I}^{N_k}$. Note that the superscript $i$ in $\B{t}_k^i$ is simply an index and should not be confused with exponent. For any such family of points, and for any $i=1,\cdots, n$, we define the vector $\B{\tilde{t}}_i:=(\B{t}_1^i, \cdots, \B{t}_m^i)\in \mathbb{R}^{\tilde{N}}$.
So we are interested in
$$
\detcov[\B{\Delta}_{\B{\tilde{t}}_1}, \B{\Delta}_{\B{\tilde{t}}_2}, \cdots, \B{\Delta}_{\B{\tilde{t}}_n}],
$$
where $\B{\tilde{t}}_1, \cdots, \B{\tilde{t}}_n\in \mathcal{I}^{\tilde{N}}$.

We first note that the determinant of the covariance matrix of a random vector is invariant under permutations of the entries of the random vector. In other words, for any $d$ dimensional random vector $(Y_1,\cdots,Y_d)$, and $\sigma$ an arbitrary permutation of the set $\{1,2,\cdots,d\}$, we have
$$
\detcov[Y_{\sigma(1)}, Y_{\sigma(2)}, \cdots, Y_{\sigma(d)}]=
\detcov[Y_{1}, Y_{2}, \cdots, Y_{d}].
$$
This is true because any permutation of a vector is equivalent to multiplying it with a permutation matrix, and the determinant of any permutation matrix is $\pm1$.
Using this fact, it is easy to verify the following equality
$$
\detcov[\B{\Delta}_{\B{\tilde{t}}_1}, \B{\Delta}_{\B{\tilde{t}}_2}, \cdots, \B{\Delta}_{\B{\tilde{t}}_n}]=\detcov[\B{A}_1-\B{A}_2, \B{A}_2-\B{A}_3, \cdots, \B{A}_{m-1}-\B{A}_m],
$$
where the random vectors $\B{A}_1$, ..., $\B{A}_m$ are defined as follows
$$
\B{A}_k:=
\bigl(\B{X}_k(\B{t}_k^1),\B{X}_k(\B{t}_k^2),\cdots,\B{X}_k(\B{t}_k^n)\bigl)\in \mathbb{R}^{nd}\;;
\quad \forall k=1,\cdots,m.
$$

Note that for any positive integer $m$, any sequence of Gaussian random vectors of the same size $\B{A}_1, \B{A}_2, \cdots,\B{A}_m$, and for any $k=1,\cdots, m$, we have
$$
\detcov\bigl[\B{A}_1, \cdots, \B{A}_m]=\detcov[\B{A}_1, \cdots, \B{A}_{k-1}, \B{A}_k+\sum_{j\neq k} c_j\B{A}_j, \B{A}_{k+1}, \cdots, \B{A}_m\bigr]
$$
where $\sum_{j\neq k}c_j\B{A}_j$ is any arbitrary linear combination of all the involved vectors excluding $\B{A}_k$. Using this simple fact we have
$$
\begin{aligned}
&\detcov\bigl[\B{A}_1-\B{A}_k, \cdots, \B{A}_{k-1}-\B{A}_k, \B{A}_{k+1}-\B{A}_k, \cdots, \B{A}_{m}-\B{A}_k\bigr]=\\
&\qquad=\detcov\bigl[\B{A}_1-\B{A}_2, \B{A}_2-\B{A}_3, \cdots, \B{A}_{m-1}-\B{A}_m\bigr]\quad:\quad \forall k=1, \cdots, m.
\end{aligned}
$$
So using Proposition \ref{Reduction Inequality for detCov}, for every $k$ we have
\begin{equation}\label{simple lower bound for intersection detCov}
\detcov[\B{A}_1-\B{A}_2, \B{A}_2-\B{A}_3, \cdots, \B{A}_{m-1}-\B{A}_m]\geq
\frac{\detcov[\B{A}_1, \B{A}_2, \cdots, \B{A}_m]}{\detcov[\B{A}_k]}.
\end{equation}
Let $p_k:=1-q_k$ for every $k=1,\cdots, m$.
By \eqref{simple lower bound for intersection detCov}, and noting that $\sum_{k=1}^{m}p_k=1$, we have
$$
\detcov[\B{A}_1-\B{A}_2, \B{A}_2-\B{A}_3, \cdots, \B{A}_{m-1}-\B{A}_m]\geq
\prod_{k=1}^{m}\Bigl(\frac{\detcov[\B{A}_1, \B{A}_2, \cdots, \B{A}_m]}{\detcov[\B{A}_k]}\Bigr)^{p_k}.
$$
Using the independence of $\B{A}_i$'s, we get
$$
\detcov\bigl[\B{A}_1-\B{A}_2, \B{A}_2-\B{A}_3, \cdots, \B{A}_{m-1}-\B{A}_m\bigr]\geq
\prod_{k=1}^{m}\bigl(\detcov[\B{A}_k]\bigr)^{\sum_{j\neq k}p_j}.
$$
As $\sum_{j\neq k}p_j=q_k$, we come to
\begin{equation}\label{upper bound on intersection kernel}
\U{K}_n^{\B{\Delta}}(\B{\tilde{t}}_1, \cdots, \B{\tilde{t}}_n)\leq
\prod_{i=1}^{m}\bigl(\U{K}_n^{\B{X}_k}(\B{t}_k^1, \cdots, \B{t}_k^n)\bigr)^{q_k}
\end{equation}
Applying Lemma \ref{local nondeterminism theorem}, it is clear that $\U{K}_n^{\B{\Delta}}(\B{\tilde{t}}_1, \cdots, \B{\tilde{t}}_n)$ is integrable over $\mathcal{I}^{n\tilde{N}}$ and its integral is bounded from above by  $c^n \,(n!)^{\sum_{k=1}^{m} \frac{q_k}{\tilde{\xi}_k}}$, where $c$ is a positive constant that does not depend on $n$.


\end{proof}


\section{Existence of the limit}\label{Existence of the limit}
In this section we will prove that for any centered Gaussian $(N,d)$-field $\B{X}_{\B{t}}$ satisfying conditions $\mathfrak{A}_0$, $\mathfrak{A}_1$, $\mathfrak{A}_2$, and $\mathfrak{A}_3$ (with mutually rational $\alpha_i$'s), the following limit exists
$$
\lim_{n\rightarrow +\infty}\frac{\sqrt[n]{\mathbb{E}(Z^n)}}{n^\lambda},
$$
where $Z:=L_{\B{0}}(\B{X},\mathcal{I}^N)$ and $\lambda:=\frac{ \textrm{tr}(\BU{H})}{\sum_{k=1}^{N}\alpha_k}$, as in the last section. In the sequel, we prove Theorem \ref{main theorem on local times} and Corollary \ref{limit theorem on intersection local times}.

We use the standard notation $\lfloor\cdot\rfloor$ for the floor function ,i.e., for every $x\in \mathbb{R}$, the expression $\lfloor x\rfloor$ is the largest integer that is smaller than or equal to $x$.

\begin{lem}\label{The Inequality}
For any $\omega\in \mathbb{R}_+$, there exist strictly positive constants $r_1$ and $\kappa$ that only depend on $N$, $\alpha_i$'s and $\BU{H}$, such that for every $r>r_1$ we have
$$
\mathbb{E} (Z^{M(r+1)})\geq
\frac{\kappa^M}{r^{M(N+1)}}
\bigl(\frac{M}{\omega^{\sum_{k=1}^{N}\alpha_k}}\bigr)^{rM}
\omega^{rM \textrm{tr}(\BU{H})}
\Bigl(\mathbb{E}( Z^{r})\Bigr)^M.
$$
where $M:=\prod_{i=1}^N \lfloor \omega^{\alpha_i}\rfloor$.
\end{lem}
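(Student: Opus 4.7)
The plan is to tile $\mathcal{I}^N$ with $M$ disjoint translates $\mathcal{C}_j := \B{p}_j + \prod_i[0,\omega^{-\alpha_i}]$ of a single small cube and decompose $Z \geq \sum_{j=1}^M L_j$ with $L_j := L_{\B{0}}(\B{X},\mathcal{C}_j)$. Retaining only the central term of the multinomial expansion gives $Z^{M(r+1)}\geq\binom{M(r+1)}{r+1,\ldots,r+1}\prod_j L_j^{r+1}$, and the Fischer-type inequality (sub-multiplicativity of $\detcov$ across disjoint index blocks, applied through Proposition \ref{existence and approximation}) yields
$$\mathbb{E}(Z^{M(r+1)})\geq \binom{M(r+1)}{r+1,\ldots,r+1}\prod_{j=1}^M\mathbb{E}(L_j^{r+1}),$$
reducing the task to a uniform-in-$j$ lower bound on the single-cube moment $\mathbb{E}(L_j^{r+1})$.

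Because each $\mathcal{C}_j$ is translated from the origin, the self-similarity $\mathfrak{A}_3$ does not apply directly, so I would combine stationary increments $\mathfrak{A}_2$ with the row/column identity $\detcov[\B{X}_{\B{t}_1},\ldots,\B{X}_{\B{t}_{r+1}}]=\detcov[\B{X}_{\B{t}_1},(\B{X}_{\B{t}_k}-\B{X}_{\B{t}_1})_{k\geq 2}]$ and the Hadamard bound $\detcov[\B{X}_{\B{t}_1}]\leq c_0^d$ from $\mathfrak{A}_0$ to obtain the pointwise inequality
$$\U{K}_{r+1}^{\B{X}}(\B{t}_1,\ldots,\B{t}_{r+1})\geq (2\pi c_0)^{-d/2}\,\U{K}_r^{\B{X}}(\B{t}_2-\B{t}_1,\ldots,\B{t}_{r+1}-\B{t}_1).$$
I would then restrict the integral $\mathbb{E}(L_j^{r+1})=\int_{\mathcal{C}_j^{r+1}}\U{K}_{r+1}^{\B{X}}$ to the sub-domain where $\B{t}_1\in\B{p}_j+\prod_i[0,\omega^{-\alpha_i}/r]$ and $\B{t}_k-\B{t}_1\in\prod_i[0,\omega^{-\alpha_i}(1-1/r)]$ for $k\geq 2$ (which still lies inside $\mathcal{C}_j^{r+1}$), integrate out the anchor $\B{t}_1$ (Jacobian $r^{-N}\omega^{-\sum\alpha_i}$), and apply the change of variable $\B{s}_k=\omega^{-\B{\alpha}}\circ\B{v}_k$ together with the $\mathfrak{A}_3$-derived scaling $\U{K}_r^{\B{X}}(\omega^{-\B{\alpha}}\circ\B{v}_1,\ldots)=\omega^{r\,\textrm{tr}(\BU{H})}\U{K}_r^{\B{X}}(\B{v}_1,\ldots)$ to reshape the remaining integral into $\omega^{r(\textrm{tr}(\BU{H})-\sum\alpha_i)}\int_{[0,1-1/r]^{Nr}}\U{K}_r^{\B{X}}$.

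The inclusion $\omega_r^{-\B{\alpha}}\mathcal{I}^N\subset[0,1-1/r]^N$ with $\omega_r:=(1-1/r)^{-1/\alpha_{\min}}$, followed by one more application of self-similarity, yields $\int_{[0,1-1/r]^{Nr}}\U{K}_r^{\B{X}}\geq(1-1/r)^{r(\sum\alpha_i-\textrm{tr}(\BU{H}))/\alpha_{\min}}\mathbb{E}(Z^r)$, whose prefactor is bounded below by some $c_1>0$ for all $r\geq r_1$ (using $\sum\alpha_i>\textrm{tr}(\BU{H})$ from Proposition \ref{parameters restrictions}). Raising the resulting estimate $\mathbb{E}(L_j^{r+1})\geq (2\pi c_0)^{-d/2}\,c_1\,r^{-N}\omega^{-\sum\alpha_i}\omega^{r(\textrm{tr}(\BU{H})-\sum\alpha_i)}\mathbb{E}(Z^r)$ to the $M$-th power and combining with the Stirling lower bound $\binom{M(r+1)}{r+1,\ldots,r+1}\geq c_s^M M^{M(r+1)}/r^{M/2}$ produces a factor $M^{M(r+1)}\omega^{-(r+1)M\sum\alpha_i}=(M/\omega^{\sum\alpha_i})^{M(r+1)}$; the uniform lower bound $M/\omega^{\sum\alpha_i}\geq 2^{-N}$ (valid since $\lfloor x\rfloor/x\geq 1/2$ for $x\geq 1$) then gives $(M/\omega^{\sum\alpha_i})^{M(r+1)}\geq 2^{-NM}(M/\omega^{\sum\alpha_i})^{Mr}$, which, together with $r^{-M(N+1/2)}\geq r^{-M(N+1)}$, yields the claim with $\kappa:=c_s(2\pi c_0)^{-d/2}c_1/2^N$. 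The principal obstacle is this very translation of the cubes $\mathcal{C}_j$, which forces the anchor-subtraction device in order to reduce $\mathbb{E}(L_j^{r+1})$ to an integral involving only the differences $\B{t}_k-\B{t}_1$ before self-similarity can be used; a secondary subtlety is the exponent mismatch between $M(r+1)$ and $Mr$ on $M/\omega^{\sum\alpha_i}$, absorbed via the above uniform bound.
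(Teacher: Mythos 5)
Your proposal is correct and follows essentially the same strategy as the paper's proof: tile $\mathcal{I}^N$ into $M$ congruent sub-cubes, factor the kernel across sub-cubes via the block sub-multiplicativity of $\detcov$ (the paper's event/partition count $\mathfrak{N}^{M(r+1)}_{M}$ is exactly your multinomial coefficient), and inside each sub-cube use the anchor-subtraction together with Properties $\mathfrak{A}_0$, $\mathfrak{A}_2$ and $\mathfrak{A}_3$ to reduce to $\mathbb{E}(Z^r)$ at the cost of the factors $r^{-N}\omega^{-\sum_k\alpha_k}\,\omega^{r(\textrm{tr}(\BU{H})-\sum_k\alpha_k)}$. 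The only difference is cosmetic: where the paper integrates the anchor over the whole sub-cube and bounds the resulting integral $J(\B{\alpha},\BU{H},r)\geq c_2/r^N$, you restrict the anchor to a corner box of relative side $1/r$ and shrink the difference variables by $(1-1/r)$, which yields the same $r^{-N}$ loss, and the final absorption of the extra factor $(M/\omega^{\sum_k\alpha_k})^{M}$ is likewise what the paper does implicitly in its last step.
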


\begin{proof}

\noindent
\textbf{Step 1: }

%
Let $\mu:=M(r+1)$.
Using proposition \ref{existence and approximation}, we have the following probabilistic representation
\begin{equation}\label{S1E1}
\mathbb{E} (Z^{M(r+1)})
=\mathbb{E}^{\B{\tau}}\bigl(\U{K}_\mu(\B{\tau}_1, \cdots, \B{\tau}_\mu)\bigr),
%
\end{equation}
where $\{\B{\tau}_i\}_{i=1}^\mu$ is a family independent identically distributed (iid) random variables, each $\B{\tau}_i$ being uniformly distributed over $[0,1]^N$, and $\mathbb{E}^{\B{\tau}}$ denotes expectation with respect to the family of random variables $\{\B{\tau}_i\}_{i=1}^\mu$.
%

We define the set of points $\mathcal{P}\subset [0,1]^N$ as
$$
\mathcal{P}:=\Bigl\{\,(\frac{i_1}{\omega^{\alpha_1}}, \frac{i_2}{\omega^{\alpha_2}}, \cdots, \frac{i_N}{\omega^{\alpha_N}})\,\Big|\, i_k\in \{0, 1, \cdots, \lfloor \omega^{\alpha_k}\rfloor-1\}:\, \forall k=1, 2, \cdots, N\Bigr\},
$$
where $\lfloor\cdot\rfloor$ denotes the floor function, as usual.
For every $\B{p}\in\mathcal{P}$, we define the sub-cube $\mathcal{C}^{\circ}(\B{p},\omega^{-\B{\alpha}}) $ as
$$
\mathcal{C}^{\circ}(\B{p},\omega^{-\B{\alpha}}) :=
\B{p}+\prod_{k=1}^{N}(0,\frac{1}{\omega^{\alpha_k}}).
$$
We define the event $\Xi$ as the event on the family $\{\B{\tau}_i\}_{i=1}^{M(r+1)}$ such that every sub-cube $\mathcal{C}^{\circ}(\B{p},\omega^{-\B{\alpha}})$, $\B{p}\in\mathcal{P}$, contains exactly $r+1$ points of the set $\{\B{\tau}_i\}_{i=1}^{M(r+1)}$.
We also define $\Omega^{\mu}_{M}$ as the set of all functions $\theta:\{1, 2, \cdots, M(r+1)\}\rightarrow \mathcal{P}$ such that at every point $\B{p}\in\mathcal{P}$, the cardinality of the inverse image of $\theta$ equals $r+1$, i.e., $|\theta^{-1}(\B{p})|=r+1$; in other words, every member $\theta\in \Omega^{\mu}_{M}$ is a partitioning of the set $\{1, 2, \cdots, M(r+1)\}$ into $M$ distinct boxes indexed by $\mathcal{P}$ such that every box contains exactly $r+1$ elements of $\{1, 2, \cdots, M(r+1)\}$. For every $\theta\in \Omega^{\mu}_{M}$, let $\Xi_\theta$ be the event that the points of $\{\B{\tau}_i\}_{i=1}^{M(r+1)}$ are distributed among the sub-cubes $\{\mathcal{C}^{\circ}(\B{p},\omega^{-\B{\alpha}}) \}_{\B{p}\in\mathcal{P}}$ according to $\theta$. It is clear that $\Xi=\bigcup_{\theta\in \Omega^{\mu}_{M}}\Xi_\theta$. So by Equation \ref{S1E1}, we have
$$
\mathbb{E} (Z^{M(r+1)})\geq
\mathbb{E}^{\B{\tau}}\bigl[\U{K}_\mu(\B{\tau}_1, \cdots, \B{\tau}_\mu)\,\mathbf{1}_{\Xi}\bigr]
=
\sum_{\theta\in \Omega^{\mu}_{M}}\mathbb{E}^{\B{\tau}}\bigl[\U{K}_\mu(\B{\tau}_1, \cdots, \B{\tau}_\mu)\,\mathbf{1}_{\Xi_\theta}\bigr].
$$
For any $\theta\in \Omega^{\mu}_{M}$ and $\B{p}\in\mathcal{P}$,  we use the following notation
$$
\U{K}_{r+1}(\tau, \theta, \B{p}):=\U{K}_{r+1}(\B{\tau}_{i_1}, \cdots, \B{\tau}_{i_{r+1}}),
$$
where $i_1$, ..., $i_{r+1}$ denote all the distinct elements of $\theta^{-1}(\B{p})$.
Using Proposition \ref{Ineuqlity for conditional variance}, for every $\theta \in \Omega^{\mu}_{M}$ we have the following inequality
$$
\U{K}_{M(r+1)}(\B{\tau}_{1}, \cdots, \B{\tau}_{{M(r+1)}})\geq \prod_{\B{p}\in\mathcal{P}}
\U{K}_{r+1}(\tau, \theta, \B{p}).
$$
Hence we obtain
$$
\mathbb{E} (Z^{M(r+1)})\geq
\sum_{\theta\in \Omega^{\mu}_{M}}\mathbb{E}^{\B{\tau}}\bigl[\prod_{\B{p}\in\mathcal{P}}
\U{K}_{r+1}(\tau, \theta, \B{p})\,\mathbf{1}_{\Xi_\theta}\bigr].
$$
For any $\theta\in \Omega^{\mu}_{M}$ and $\B{p}\in\mathcal{P}$, let $\Xi^{\B{p}}_\theta$ denote the event that the points $\B{\tau}_{i_1}, \cdots, \B{\tau}_{i_{r+1}}$ lie in the sub-cube $\mathcal{C}^{\circ}(\B{p},\omega^{-\B{\alpha}})$, where $\{i_1,\cdots,i_{r+1}\}:=\theta^{-1}(\B{p})$. It is evident that
$$ \mathbf{1}_{\Xi_\theta}=\prod_{\B{p}\in\mathcal{P}}\mathbf{1}_{\Xi^{\B{p}}_\theta}.
$$
So we get
\begin{equation}\label{StepOneResult}
\mathbb{E} (Z^{M(r+1)})\geq
\sum_{\theta\in \Omega^{\mu}_{M}}\prod_{\B{p}\in\mathcal{P}}\mathbb{E}^{\B{\tau}}\bigl[
\U{K}_{r+1}(\tau, \theta, \B{p})\,\mathbf{1}_{\Xi^{\B{p}}_\theta}\bigr].
\end{equation}

\noindent
\textbf{Step 2: }
For any $\theta$ and $\B{p}$ fixed, let $\{i_1, \cdots, i_{r+1}\}:=\theta^{-1}(\B{p})$, i.e., $\{i_1, \cdots, i_{r+1}\}$ is the set of indices such that $\B{\tau}_{i_1}, \cdots, \B{\tau}_{i_{r+1}}\in\mathcal{C}^{\circ}(\B{p},\omega^{-\B{\alpha}}) $.  Using Proposition \ref{Reduction Inequality for detCov} we have
$$
\begin{aligned}
&\detcov (\B{X}_{\B{\tau}_{i_{1}}}, \cdots, \B{X}_{\B{\tau}_{i_{r+1}}})\\
&\quad \quad\leq \detcov(\B{X}_{\B{\tau}_{i_{r+1}}})\,
\detcov (\B{X}_{\B{\tau}_{i_{1}}}-\B{X}_{\B{\tau}_{i_{r+1}}}, \cdots, \B{X}_{\B{\tau}_{i_{r}}}-\B{X}_{\B{\tau}_{i_{r+1}}}).
\end{aligned}
$$
By Property $\mathfrak{A}_0$ and using Corollaries \ref{detCov formula} and \ref{Ineuqlity for conditional variance}, we have
$$
\detcov(\B{X}_{\B{\tau}_{i_{r+1}}})\leq c_0^d.
$$
Using the fact that $\B{X}$ has stationary increments, i.e., Property $\mathfrak{A}_2$, we have
$$
\detcov (\B{X}_{\B{\tau}_{i_{1}}}-\B{X}_{\B{\tau}_{i_{r+1}}}, \cdots, \B{X}_{\B{\tau}_{i_{r}}}-\B{X}_{\B{\tau}_{i_{r+1}}})=
\detcov (\B{X}_{\B{\tau}_{i_{1}}-\B{\tau}_{i_{r+1}}}, \cdots, \B{X}_{\B{\tau}_{i_{r}}-\B{\tau}_{i_{r+1}}}).
$$
So we have
$$
\begin{aligned}
\U{K}_{r+1}(\tau, \theta, \B{p})&=
\U{K}_{r+1}(\B{\tau}_{i_{1}}, \cdots, \B{\tau}_{i_{r+1}})\\
&\geq (2\pi)^{-\frac{d(r+1)}{2}}c_0^{-\frac{d}{2}}\,
\bigl(\detcov (\B{X}_{\B{\tau}_{i_{1}}-\B{\tau}_{i_{r+1}}}, \cdots, \B{X}_{\B{\tau}_{i_{r}}-\B{\tau}_{i_{r+1}}})\bigr)^{-\frac{1}{2}}\\
&=c_1 \,\U{K}_{r}(\B{\tau}_{i_{1}}-\B{\tau}_{i_{r+1}}, \cdots, \B{\tau}_{i_{r}}-\B{\tau}_{i_{r+1}}),
\end{aligned}
$$
where $c_2:=(2\pi c_0)^{-\frac{d}{2}}$.
Therefore, we have
\begin{equation}\label{S2E1}
\begin{aligned}
&\mathbb{E}^{\B{\tau}}\bigl[
\U{K}_{r+1}(\tau, \theta, \B{p})\,\mathbf{1}_{\Xi^{\B{p}}_\theta}\bigr]\\
&\quad\quad
\geq c_1\int_{\B{t}_{1},\cdots,\B{t}_{r+1}\in \mathcal{C}^{\circ}(\B{p},\omega^{-\B{\alpha}}) }\U{K}_{r}(\B{t}_{1}-\B{t}_{r+1}, \cdots, \B{t}_{r}-\B{t}_{r+1})\,\mathrm{d}\B{t}_{1}\cdots\mathrm{d}\B{t}_{r+1}\\
&\quad\quad
=c_1\int_{\B{z}\in \mathcal{C}^{\circ}(\B{0},\omega^{-\B{\alpha}}) }
\int_{\B{t}_{1},\cdots,\B{t}_{r}\in 
\mathcal{C}^{\circ}(\B{0},\omega^{-\B{\alpha}})
}\U{K}_{r}(\B{t}_{1}-\B{z}, \cdots, \B{t}_{r}-\B{z})\,\mathrm{d}\B{t}_{1}\cdots\mathrm{d}\B{t}_{r}\mathrm{d}\B{z},
\end{aligned}
\end{equation}
where we used change of variables in the last line.

For every $\B{z}=(z_1, \cdots, z_N)\in \mathcal{C}^{\circ}(\B{0},\omega^{-\B{\alpha}})$, we define
$$
\zeta_{\B{z}}:=\min_{k}\{(\omega^{-\alpha_k}-z_k)^{\frac{1}{\alpha_k}}\},
$$
and
$$
\B{\tilde{\zeta}}_{\B{z}}:=\zeta_{\B{z}}^{\B{\alpha}}=
(\zeta_{\B{z}}^{\alpha_1}, \cdots, \zeta_{\B{z}}^{\alpha_N}).
$$
For every such $\B{z}$, we introduce the new variables $\{\B{s}_k\}_{k=1,\cdots,r}$ in the following way
$$
\B{\tilde{\zeta}}_{\B{z}}\circ\B{s}_{k}:=\B{t}_{k}-\B{z}:\;\forall k=1, \cdots, r,
$$
where $\circ$ as usual, denotes the Schur product of two vectors, i.e., the vector formed by entry-wise multiplication of the two vectors.
It can be easily verified that for every such $\B{z}$ we have
\begin{equation}\label{S2E2}
\begin{aligned}
&\int_
{\B{t}_{1},\cdots,\B{t}_{r}\in \mathcal{C}^{\circ}(\B{0},\omega^{-\B{\alpha}})}
\U{K}_{r}(\B{t}_{1}-\B{z}, \cdots, \B{t}_{r}-\B{z})\,\mathrm{d}\B{t}_{1}\cdots\mathrm{d}\B{t}_{r}\\
&\quad \quad\geq
\zeta_{\B{z}}^{r(\alpha_1+\cdots+\alpha_N)}
\int_{\B{s}_{1},\cdots,\B{s}_{r}\in (0,1)^N}
\U{K}_{r}(\B{\tilde{\zeta}}_{\B{z}}\circ\B{s}_{1}, \cdots, \B{\tilde{\zeta}}_{\B{z}}\circ\B{s}_{r})\,\mathrm{d}\B{s}_{1}\cdots\mathrm{d}\B{s}_{r}.
\end{aligned}
\end{equation}
On the other hand, by diagonal self-similarity (Property $\mathfrak{A}_3$) and noting Remark \ref{diagonal self-similarity covariance formulation}, we have
$$
\cov (\B{X}_{\B{\tilde{\zeta}}_{\B{z}}\circ\B{s}_{1}}, \cdots, \B{X}_{\B{\tilde{\zeta}}_{\B{z}}\circ\B{s}_{r}})=\BU{\Lambda}_{r}
\cov (\B{X}_{\B{s}_{1}}, \cdots, \B{X}_{\B{s}_{r}})\,\BU{\Lambda}_{r}^\dagger,
$$
where $\Lambda_{r}\in\mathbb{R}^{rN\times rN}$ is the block diagonal matrix consisting of $r$ copies of the matrix $\zeta_{\B{z}}^{\BU{H}}$ on its main diagonal, and filled with zero elsewhere, in other words $\BU{\Lambda}_{r}=\textrm{diag}[\zeta_{\B{z}}^{\BU{H}}, \cdots, \zeta_{\B{z}}^{\BU{H}}]$.
Now we notice the fact that the determinant of the exponential of a matrix equals the exponential of its trace, i.e., $\det(e^{\BU{A}})=e^{\textrm{tr}(\BU{A})}$; see e.g. \cite[ch.2]{Hall2003}. So we get
$$
\det \cov (\B{X}_{\B{\tilde{\zeta}}_{\B{z}}\circ\B{s}_{1}}, \cdots, \B{X}_{\B{\tilde{\zeta}}_{\B{z}}\circ\B{s}_{r}})=
\zeta_{\B{z}}^{2r\,\textrm{tr}(\BU{H})},
$$
which implies that
\begin{equation}\label{S2E3}
\U{K}_{r}(\B{\tilde{\zeta}}_{\B{z}}\circ\B{s}_{1}, \cdots, \B{\tilde{\zeta}}_{\B{z}}\circ\B{s}_{r})=
\zeta_{\B{z}}^{-r\,\textrm{tr}(\BU{H})}
\U{K}_{r}(\B{s}_{1}, \cdots, \B{s}_{r}).
\end{equation}
So, by Equations \ref{S2E1}, \ref{S2E2}, and \ref{S2E3} we have
\begin{equation}\label{S2E4}
\mathbb{E}^{\B{\tau}}\bigl[
\U{K}_{r+1}(\tau, \theta, \B{p})\,\mathbf{1}_{\Xi^{\B{p}}_\theta}\bigr]
\geq
c_1 \,\mathbb{E}( Z^{r})
\int_{\B{z}\in \mathcal{C}^{\circ}(\B{0},\omega^{-\B{\alpha}})}
\zeta_{\B{z}}^{r(\sum_{k=1}^{N}\alpha_k- \textrm{tr}(\BU{H}))}
\mathrm{d}\B{z}.
\end{equation}
where we used the following equality which is a result of Proposition \ref{existence and approximation}
$$
\mathbb{E}( Z^{r})=\int_{\B{s}_{1},\cdots,\B{s}_{r}\in (0,1)^N}
\U{K}_{r}(\B{s}_{1}, \cdots, \B{s}_{r})\,\mathrm{d}\B{s}_{1}\cdots\mathrm{d}\B{s}_{r}.
$$
Now we define $\{x_k\}_{i=1}^{N}$ as $x_k:=1-\omega^{\alpha_k}z_k$. By this change of variables, we have
$$
\zeta_{\B{z}}=\frac{1}{\omega} \min_{k}\{(x_k)^{\frac{1}{\alpha_k}}\}.
$$
So we have
\begin{equation}\label{S2E5}
\int_{\B{z}\in \mathcal{C}^{\circ}(\B{0},\omega^{-\B{\alpha}})}
\zeta_{\B{z}}^{-r(\sum_{k=1}^{N}\alpha_k- \textrm{tr}(\BU{H}))}
\mathrm{d}\B{z}
=\omega^{-\sum_{k=1}^{N}\alpha_k} \omega^{-r(\sum_{k=1}^{N}\alpha_k- \textrm{tr}(\BU{H}))}
J(\B{\alpha},\BU{H},r),
\end{equation}
where
$$
J(\B{\alpha},\BU{H},r):=
\int_0^1\cdots\int_0^1
\bigl(\min_{k}\{(x_k)^{\frac{1}{\alpha_k}}\}\bigl)^{r(\sum_{k=1}^{N}\alpha_k- \textrm{tr}(\BU{H}))}
\,\mathrm{d}x_1\cdots \mathrm{d}x_N.
$$
This shows that if $\sum_{k=1}^{N}\alpha_k$ is smaller than $ \textrm{tr}(\BU{H})$, then property $\mathfrak{A}_1$ can not hold. So we assume $\sum_{k=1}^{N}\alpha_k\geq \textrm{tr}(\BU{H})$.

Denote $\eta:=r(\sum_{k=1}^{N}\alpha_k- \textrm{tr}(\BU{H}))$ and $\alpha_0:=\min \{\alpha_1,\cdots,\alpha_N\}$. One can easily verify that for $r$ larger than $r_1:=\frac{2\alpha_0}{\sum_{k=1}^{N}\alpha_k- \textrm{tr}(\BU{H})}$ (so that $\eta>2\alpha_0$), we have
$$
\begin{aligned}
&\int_0^1\cdots\int_0^1
\bigl(\min_{k}\{(x_k)^{\frac{1}{\alpha_k}}\}\bigl)^{\eta}
\,\mathrm{d}x_1\cdots \mathrm{d}x_N
\geq
\int_0^1\cdots\int_0^1
\bigl(\min_{k}\{x_k\}\bigl)^{\frac{\eta}{\alpha_0}}
\,\mathrm{d}x_1\cdots \mathrm{d}x_N\\
&\quad \quad \geq
\int_{1-\frac{\alpha_0}{\eta}}^1\cdots\int_{1-\frac{\alpha_0}{\eta}}^1
\bigl(\min_{k}\{x_k\}\bigl)^{\frac{\eta}{\alpha_0}}
\,\mathrm{d}x_1\cdots \mathrm{d}x_N
\geq (\frac{\alpha_0}{\eta})^N (1-\frac{\alpha_0}{\eta})^{\frac{\eta}{\alpha_0}}\\
&\quad \quad\geq C (\frac{\alpha_0}{\eta})^N,
\end{aligned}
$$
where $C>0$ is global contact. So for $r$ large enough ($r\geq r_1$), we have
\begin{equation}\label{S2E6}
J(\B{\alpha},\BU{H},r)\geq \frac{c_2}{r^N},
\end{equation}
where $c_2>0$ is a constant that only depends on $\alpha_i$'s and $N$. When $\sum_{k=1}^{N}\alpha_k= \textrm{tr}(\BU{H})$, Equation \eqref{S2E6} remains valid for every $r\in \mathbb{N}$. So in this case we define $r_1$ equal to $1$.

So by applying Equations \eqref{S2E6} and \eqref{S2E5} into Equation \eqref{S2E4} we get
\begin{equation}\label{S2E7}
\mathbb{E}^{\B{\tau}}\bigl[
\U{K}_{r+1}(\tau, \theta, \B{p})\,\mathbf{1}_{\Xi^{\B{p}}_\theta}\bigr]
\geq
\frac{c_1 c_2}{r^N \omega^{\sum_{k=1}^{N}\alpha_k}}
\omega^{-r(\sum_{k=1}^{N}\alpha_k- \textrm{tr}(\BU{H}))}
\mathbb{E}( Z^{r}).
\end{equation}

\noindent
\textbf{Step 3: }
Applying Equation \eqref{S2E7} to \eqref{StepOneResult} we get
\begin{equation}\label{S3E1}
\mathbb{E} (Z^{M(r+1)})\geq
\frac{c_1^{M}c_2^M \mathfrak{N}^{M(r+1)}_{M}}{r^{MN} \omega^{M\sum_{k=1}^{N}\alpha_k}}
\omega^{-rM(\sum_{k=1}^{N}\alpha_k- \textrm{tr}(\BU{H}))}
\Bigl(\mathbb{E}( Z^{r})\Bigr)^M,
\end{equation}
where $\mathfrak{N}^{M(r+1)}_{M}:=|\Omega^{M(r+1)}_{M}|$, i.e., the cardinality of $\Omega^{M(r+1)}_{M}$. Using Lemma \ref{Partitioning a set}, we have
$$
\mathfrak{N}^{M(r+1)}_{M}\geq
\frac{\kappa_1 \sqrt{M}}{\kappa_2^M \sqrt{(r+1)^M}} M^{M(r+1)},
$$
where $\kappa_1$ and $\kappa_2$ are global constants. So we have
$$
\mathbb{E} (Z^{M(r+1)})\geq
\frac{\kappa_1 c_1^{M}c_2^M \sqrt{M}}{\kappa_2^M r^{MN} \sqrt{(r+1)^{M}} }
\bigl(\frac{M}{\omega^{\sum_{k=1}^{N}\alpha_k}}\bigr)^{M(r+1)}
\omega^{rM \textrm{tr}(\BU{H})}
\Bigl(\mathbb{E}( Z^{r})\Bigr)^M.
$$
which clearly implies the statement of the lemma.
\end{proof}


\begin{lem}\label{Iterated Inequality}
There exists a positive number $r_1$ that only depends on $N$, $\alpha_i$'s and $\BU{H}$, such that for every $r>r_1$ and any $\omega\in \mathbb{R}_+$ with the property that $\omega^{\alpha_i}$'s are all integer numbers, and for any positive integer $q$, we have
\begin{equation}\label{L3E1}
\frac{\Bigl(\mathbb{E} (Z^{r M^q(1+o_r)})\Bigr)^{\frac{1}{r M^q(1+o_r)}}}
{(r M^q(1+o_r))^{\lambda}}
\geq
B_{\omega}(r)\,
\Bigl(\frac{
\sqrt[r]{\mathbb{E}(
Z^{r})}
}{r^{\lambda}}\Bigr)^{\frac{1}{1+o_r}},
\end{equation}
where $M:=\prod_{i=1}^{N}\lfloor\omega^{\alpha_i}\rfloor$, $o_r:=\frac{1}{r}\sum_{k=0}^{q-1}\frac{1}{M^k}$, and $B_{\omega}(r)$ is a strictly-positive-valued function ($B_{\omega}(r)>0$) that depends only on $\omega$, $r$, $N$ and $\BU{H}$ such that $\lim_{r\rightarrow+\infty} B_{\omega}(r)=1$.
\end{lem}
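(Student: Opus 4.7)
The plan is to iterate Lemma \ref{The Inequality} exactly $q$ times. Under the hypothesis that every $\omega^{\alpha_i}$ is an integer, we have $M=\prod_i\lfloor\omega^{\alpha_i}\rfloor=\prod_i\omega^{\alpha_i}=\omega^{\sum_i\alpha_i}$ exactly, so $M/\omega^{\sum_k\alpha_k}=1$ and $\omega^{rM\,\textrm{tr}(\BU{H})}=M^{rM\lambda}$. Lemma \ref{The Inequality} therefore simplifies, for $r>r_1$, to
\begin{equation}\label{simplified one step}
\mathbb{E}(Z^{M(r+1)})\geq \frac{\kappa^M M^{rM\lambda}}{r^{M(N+1)}}\bigl(\mathbb{E}(Z^r)\bigr)^M.
\end{equation}

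Define recursively $r_0:=r$ and $r_{k+1}:=M(r_k+1)$. A straightforward induction gives $r_q=rM^q(1+o_r)$, with $o_r$ as in the statement. Apply \eqref{simplified one step} at level $k$ (which is legitimate as long as $r_k>r_1$, automatic once $r>r_1$), take the $r_{k+1}$-th root, and divide by $r_{k+1}^\lambda=M^\lambda(r_k+1)^\lambda$. Writing $b_n:=\sqrt[n]{\mathbb{E}(Z^n)}/n^{\lambda}$ and $\beta_k:=r_k/(r_k+1)$, the inequality rearranges to
\begin{equation}\label{one step on b}
b_{r_{k+1}}\geq D_k\,b_{r_k}^{\beta_k},\qquad
\log D_k=-\lambda\log\!\Bigl(1+\tfrac{1}{r_k}\Bigr)+\frac{\log(\kappa/M^\lambda)-(\lambda+N+1)\log r_k}{r_k+1}.
\end{equation}
Iterating \eqref{one step on b} from $k=0$ to $k=q-1$ yields
\begin{equation}\label{iterated}
b_{r_q}\geq b_r^{\,\prod_{j=0}^{q-1}\beta_j}\,\prod_{k=0}^{q-1}D_k^{\prod_{j=k+1}^{q-1}\beta_j}.
\end{equation}

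The crucial telescoping identity is now immediate: since $r_{k+1}=M(r_k+1)$ implies $\beta_k=Mr_k/r_{k+1}$, the product collapses to
\begin{equation}\label{telescoping}
\prod_{j=0}^{q-1}\beta_j=\frac{M^q r_0}{r_q}=\frac{1}{1+o_r},
\end{equation}
which is exactly the exponent on $b_r$ in the statement of the lemma. Defining $B_\omega(r)$ as the prefactor in \eqref{iterated} (or, to make it independent of $q$, the infimum of that prefactor over $q\in\mathbb{N}$), inequality \eqref{L3E1} follows at once from \eqref{iterated} and \eqref{telescoping}.

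It remains to check that $B_\omega(r)>0$ and $B_\omega(r)\to 1$ as $r\to\infty$, which is where the bulk of the estimation sits and is the main technical point. Using $r_k\geq rM^k$, the expression in \eqref{one step on b} gives $|\log D_k|\leq c\,(\log r_k)/r_k\leq c'(\log r+k\log M)/(rM^k)$ for a constant $c'$ depending only on $M$, $\lambda$, $\kappa$, $N$. Since $\beta_j\leq 1$, summing on $k$ yields
\begin{equation}\label{uniform bound on log B}
\Bigl|\sum_{k=0}^{q-1}\prod_{j=k+1}^{q-1}\beta_j\log D_k\Bigr|\leq c'\sum_{k=0}^{\infty}\frac{\log r+k\log M}{rM^k}=O\!\Bigl(\frac{\log r}{r}\Bigr),
\end{equation}
uniformly in $q$. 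Hence the prefactor in \eqref{iterated} is bounded above and below by positive constants that are independent of $q$ and converge to $1$ as $r\to\infty$, which justifies the definition of $B_\omega(r)$ and establishes its asymptotic behaviour. The main obstacle is precisely making this uniformity in $q$ rigorous while keeping $B_\omega(r)$ depending only on $\omega$, $r$, $N$, $\BU{H}$; the summability in \eqref{uniform bound on log B}, guaranteed by the geometric growth $r_k\geq rM^k$, is what makes this possible.
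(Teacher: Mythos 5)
Your proposal is correct and takes essentially the same route as the paper: you iterate Lemma \ref{The Inequality} $q$ times, use the integrality of the $\omega^{\alpha_i}$ to make the factor $M/\omega^{\sum_k\alpha_k}$ equal to $1$, identify the exponent $\frac{1}{1+o_r}$ from the iteration, and bound the accumulated prefactor uniformly in $q$ via the geometric growth $r_k\geq rM^k$. Your recursive bookkeeping with $b_n$, $\beta_k$, $D_k$ and the telescoping product is just a cleaner repackaging of the paper's explicit constants $A_{\omega,r,q}$ and $B'_{\omega,r,q}$, with the same implicit use of $M\geq 2$ for the uniform-in-$q$ summability.
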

\begin{proof}
By Lemma \ref{The Inequality} we have
$$
\mathbb{E} (Z^{M(r+1)})\geq
\frac{\kappa^M}{r^{M(N+1)}}
(\rho
\omega^{ \textrm{tr}(\BU{H})})^{rM}
\Bigl(\mathbb{E}( Z^{r})\Bigr)^M,
$$
where $M=\prod_{i=1}^N \lfloor \omega^{\alpha_i}\rfloor$, and $\rho:=\frac{M}{\omega^{\sum_{k=1}^{N}\alpha_k}}$.
Reiterating this inequality $q$ times, and using the inequality $M^k r+\sum_{i=1}^{k}M^i\leq M^k(r+2)$,
we get
$$
\mathbb{E} (Z^{r M^q+\sum_{i=1}^{q}M^i})\geq
A_{\omega,r,q}\, (\rho \omega^{ \textrm{tr}(\BU{H})})^{(q r M^q+\sum_{i=1}^{q-1}i M^{i+1})}
\Bigl(\mathbb{E}( Z^{r})\Bigr)^{M^q},
$$
where
$$
A_{\omega,r,q}:=\frac{\kappa^{\sum_{i=1}^q M^i}}{M^{(N+1)\sum_{i=1}^{q-1}i M^{q-i}}\, (r+2)^{(N+1)\sum_{i=1}^q M^i}}.
$$
%
For $\lambda:=\frac{\textrm{tr}(\BU{H})}{\sum_{i=1}^N\alpha_i}$, and using the notation $o_r:=\frac{1}{r}\sum_{i=0}^{q-1}\frac{1}{M^i}$, we have
\begin{equation}\label{T1S1E1}
\frac{\Bigl(\mathbb{E} (Z^{r M^q+\sum_{i=1}^{q}M^i})\Bigr)^{\frac{1}{r M^q+\sum_{i=1}^{q}M^i}}}
{(r M^q+\sum_{i=1}^{q}M^i)^{\lambda}}
\geq
B_{\omega,r,q}\,
\Bigl(\frac{
\sqrt[r]{\mathbb{E}(
Z^{r})}
}{r^{\lambda}}\Bigr)^{\frac{1}{1+o_r}},
\end{equation}
where
$$
B'_{\omega,r,q}:=(A_{\omega,r,q})^{\frac{1}{r M^q(1+o_r)}}
(\rho \omega^{ \textrm{tr}(\BU{H})})^{
\frac{q r M^q+\sum_{i=1}^{q-1}i M^{i+1}}{r M^q+\sum_{i=1}^{q}M^i}
} M^{-q\lambda}
r^{-\lambda\frac{o_r}{1+o_r}}
(1+o_r)
^{-\lambda}.
$$
Using the inequalities $\sum_{i=0}^{+\infty} x^{i}=\frac{1}{(1-x)}$ and $\sum_{i=1}^{+\infty}i x^{i-1}=\frac{1}{(1-x)^2}$, and noting that $M\geq2$, we can easily verify that as $r$ goes to $+\infty$, the function $o_r$ converges to zero uniformly in $q$ and $\omega$, and
\begin{equation}
M^{q-1}\leq \sum_{i=1}^{q-1}i M^{q-i} \leq 4 M^{q-1}
\quad \text{and} \quad
M^{q}\leq \sum_{i=1}^{q}i M^{i} \leq 2 M^{q}.
\end{equation}
Using these inequalities, we can easily show that
\begin{equation}\label{T1S1E2}
(A_{\omega,r,q})^{\frac{1}{r M^q(1+o_r)}}\geq A_r,
\end{equation}
where $A_r>0$ is only a function of $r$ and $N$ such that $\lim_{r\rightarrow+\infty} A_r=1$.
It is also easy to verify that
$$
q\sum_{i=1}^{q}M^i-\sum_{i=1}^{q-1}i M^{i+1}=M^q \sum_{i=1}^{q} \frac{i}{M^{i-1}},
$$
and hence
\begin{equation}\label{T1S1E3}
q-\frac{q r M^q+\sum_{i=1}^{q-1}i M^{i+1}}{r M^q+\sum_{i=1}^{q}M^i}=\frac{\sum_{i=1}^{q} \frac{i}{M^{i-1}}}{r(1+o_r)}\leq \frac{1}{r(1+o_r)(1-\frac{1}{M})^2}\,.
\end{equation}
Noting that under the assumptions of the lemma, $\rho$ is equal to $1$, and using Equations \eqref{T1S1E2} and \eqref{T1S1E3}, we obtain
$$
B'_{\omega,r,q}\geq B_{\omega}(r),
$$
where $B_{\omega}(r)$ is a strictly-positive-valued function that only depends on $N$, $r$, $\omega$, and $\BU{H}$ such that
$\lim_{r\rightarrow+\infty} B_{\omega}(r)=1$.
This completes the proof.
\end{proof}

\begin{lem}\label{existence of moments limit}
Let $\B{X}_{\B{t}}$ be an (N,d)-Gaussian random field satisfying Properties $\mathfrak{A}_1$, $\mathfrak{A}_0$, $\mathfrak{A}_2$, and $\mathfrak{A}_3$ with self-similarity vector $\B{\alpha}:=(\alpha_1,\cdots,\alpha_N)$ such that for every $i$ and $j$, the quotient $\alpha_i/\alpha_j$ is a rational number. Then the following limit exists
\begin{equation}\label{Theorem Limit Equation}
\lim_{n\rightarrow +\infty}\frac{\sqrt[n]{\mathbb{E}(Z^n)}}{n^\lambda},
\end{equation}
where $Z:=L_{\B{0}}(\B{X},[0,1]^N)$ and $\lambda:=\frac{ \textrm{tr}(\BU{H})}{\sum_{k=1}^{N}\alpha_k}$.
\end{lem}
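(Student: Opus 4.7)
Set $a_n := \sqrt[n]{\mathbb{E}(Z^n)}/n^\lambda$, $\Lambda := \limsup_n a_n$, and $\ell := \liminf_n a_n$; it suffices to show $\ell \geq \Lambda$, since $\ell \leq \Lambda$ is automatic. Two tools will be combined: Lyapunov's inequality, which (since $Z \geq 0$) implies $\bigl(\mathbb{E}(Z^n)\bigr)^{1/n}$ is nondecreasing in $n$ and hence $a_m \geq a_n(n/m)^\lambda$ whenever $m \geq n$; and Lemma \ref{Iterated Inequality}. The rationality hypothesis is used only to invoke the latter: writing $\alpha_i = p_i/q$ with a common positive integer denominator $q$ and positive integers $p_i$, for any integer $k \geq 2$ the choice $\omega := k^q$ makes each $\omega^{\alpha_i} = k^{p_i}$ a positive integer, so that $M := \prod_i k^{p_i} = k^P$ (with $P := \sum_i p_i$) meets the hypothesis of Lemma \ref{Iterated Inequality}.

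Using $\omega^{\sum_i \alpha_i} = M$ and $\omega^{\textrm{tr}(\BU{H})} = M^\lambda$, and noting that $rM^q(1+o_r^{(q)}) = rM^q + \sum_{i=1}^{q} M^i =: n_q(r,\omega)$, I will rewrite the iterated inequality as
$$
a_{n_q(r,\omega)} \,\geq\, B_\omega(r)\, a_r^{\,1/(1+o_r^{(q)})}, \qquad r > r_1,\ q \geq 1,
$$
where $B_\omega(r) \to 1$ as $r \to \infty$ and $o_r^{(q)} \to 0$ uniformly in $q$. This bound surpasses what Lyapunov alone yields (namely $a_{n_q(r,\omega)} \geq a_r (r/n_q(r,\omega))^\lambda \sim a_r M^{-q\lambda}$) by a factor $M^{q\lambda}$, and is the source of the contraction that will force $\ell = \Lambda$.

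For $\epsilon > 0$, I will select $r_0 > r_1$ in the good set $G := \{r : a_r \geq \Lambda - \epsilon\}$ (or $a_{r_0} \geq 1/\epsilon$ if $\Lambda = +\infty$; such $r_0$ exist arbitrarily large by the definition of $\Lambda$) and large enough that both $B_\omega(r_0)$ and $1/(1+o_{r_0}^{(q)})$ are within $\epsilon$ of $1$ uniformly in $q$. The displayed inequality then gives $a_{n_q(r_0,\omega)} \geq \Lambda - C\epsilon$ for every $q \geq 1$. Since the intervals $[n_q(r_0,\omega), n_q(r_0+1,\omega)] = [n_q(r_0,\omega), n_q(r_0,\omega)+M^q]$ tile $[n_q(r_1,\omega),\infty)$ as $r_0$ ranges over $\{r_1, r_1+1, \ldots\}$, every sufficiently large $n$ lies in such an interval for $r := \lfloor(n - \sum_{i=1}^{q}M^i)/M^q\rfloor$; Lyapunov bridging then yields $a_n \geq a_{n_q(r,\omega)}(n_q(r,\omega)/n)^\lambda \geq a_{n_q(r,\omega)}(1 - 1/(r+1))^\lambda$.

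The hard part, and the focus of the final step, is that the $r$ naturally assigned by this covering to a given $n$ need not lie in $G$. To handle this I will enlarge $G$ via Lyapunov monotonicity to $G' := \bigcup_{r_0 \in G}[r_0, r_0(1+\delta)]$, on which $a_r \geq \Lambda - \epsilon - \lambda\delta\Lambda$; and then exploit the freedom to vary both $k \geq 2$ (hence $M = k^P$) and the depth $q$: the values $r(n,k,q) := \lfloor(n - \sum_{i=1}^{q} M^i)/M^q\rfloor$, as $(k,q)$ varies over admissible pairs, range finely enough at every scale — using the density of $\{\log k : k \geq 2\}$ in $[\log 2, \infty)$, which follows from $\log(k+1) - \log k \to 0$ — that for every sufficiently large $n$ some admissible choice of $(k,q)$ places $r(n,k,q)$ in $G'$. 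Combining all the estimates gives $a_n \geq \Lambda - O(\epsilon + \delta)$ for every large $n$; letting $\epsilon, \delta \to 0$ yields $\ell \geq \Lambda$, establishing the existence of the limit in $\mathbb{R}_+ \cup \{+\infty\}$.
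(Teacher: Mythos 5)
Your overall skeleton (limsup/liminf, the iterated inequality, Lyapunov/H\"older bridging across a small multiplicative gap, a ``good set'' of indices where $a_r$ is near the limsup) is the same as the paper's, but the final step --- the claim that for every large $n$ some admissible $(k,q)$ places $r(n,k,q)$ in $G'$ --- has a genuine gap. The obstruction is that the correction factor $B_\omega(r)$ in Lemma \ref{Iterated Inequality} is \emph{not} uniform in $\omega$: already a single application of Lemma \ref{The Inequality} from starting moment $r$ with modulus $M=\omega^{\sum_i\alpha_i}$ costs a factor of order $M^{-\lambda/(r+1)}$, so $B_\omega(r)\rightarrow 1$ only for \emph{fixed} $\omega$ as $r\rightarrow+\infty$. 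In your scheme the starting moment is pinned near the fixed $r_0=r_0(\epsilon)\in G$, so keeping the loss within $\epsilon$ forces $\ln M=P\ln k=O(\epsilon r_0)$, i.e.\ $k\le K(\epsilon,r_0)$ bounded. But the fine spacing of $\{\log k: k\ge 2\}$ that you invoke to hit $G'$ comes precisely from large $k$ (for instance, reaching $k^{Pq}\approx n/r_0$ with small $q$ needs $k\rightarrow\infty$ as $n\rightarrow\infty$), and those $k$ are exactly the inadmissible ones: for them $B_\omega(r_0)$ tends to $0$, not $1$. With $k$ confined to a bounded range and a single base per chain, the logarithms of the reachable indices $r'k^{Pq}$, $r'\in G'$, lie in finitely many arithmetic progressions with gaps bounded below by $P\log 2$; since you know nothing about $G$ beyond unboundedness (it may be extremely sparse), for small $\delta$ there are infinitely many $n$ for which no admissible $(k,q)$ lands $r(n,k,q)$ in $G'$, and bridging across the remaining constant-size multiplicative gap via Lyapunov only gives $\liminf\ge c\,\limsup$ with a fixed $c<1$, not existence of the limit.

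The paper closes exactly this hole with an extra mechanism you do not have: it composes the iterated inequality with \emph{two} fixed bases, $\omega_1,\omega_2$ chosen so that $M_1=2^{m_0}$ and $M_2=3^{m_0}$ (the second application starts from the already large moment $R$, so its correction still tends to $1$), reaching indices $\approx r\,2^{m_0p}3^{m_0q}$, and then uses the irrationality of $\log_2 3$ through Dirichlet's approximation theorem (Theorem \ref{Dirichlet}) to produce $\nu=2^{m_0p_0}/3^{m_0q_0}$ arbitrarily close to $1$; powers of $\nu$ make the reachable indices multiplicatively dense enough to approach every large $n$ from below within a factor close to $1$, after which the H\"older bridging you describe finishes the proof. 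That two-base composition plus the Diophantine input is the missing idea in your sketch; replacing it by ``$\log(k+1)-\log k\rightarrow 0$'' does not work for the reason above. (A separate, easily repaired slip: the hypothesis gives only mutually rational $\alpha_i$, not rational $\alpha_i$, so write $\alpha_i=p_i\alpha$ for a common real $\alpha>0$ and take $\omega=k^{1/\alpha}$, as the paper does, rather than $\alpha_i=p_i/q$ and $\omega=k^{q}$.)
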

\begin{proof}

Define
$$
\overline{\ell}:=\limsup_{n\rightarrow+\infty} \frac{
\sqrt[n]{\mathbb{E}(Z^{n})}}{n^{\lambda}}
\quad \text{and} \quad
\underline{\ell}:=\liminf_{n\rightarrow+\infty} \frac{
\sqrt[n]{\mathbb{E}(Z^{n})}}{n^{\lambda}}.
$$
Consider any positive real number $\ell$ that is strictly less than $\overline{\ell}$. Let $\ell_1$ and $\ell_2$ be real numbers satisfying $\ell<\ell_1<\ell_2<\overline{\ell}$.

\noindent
\textbf{Step 1: }
As for every $i$ and $j$, the quotient $\alpha_i/\alpha_j$ is a rational number, we can find a real number $\alpha>0$ such that for every $i$, the quotient $\frac{\alpha_i}{\alpha}$ is an integer. Now choose $\omega_1$ and $\omega_2$ such that $\omega_1^\alpha=2$ and $\omega_2^\alpha=3$. Clearly, in this case all $\omega_j^{\alpha_i}$'s are integer-valued for every $j=1,2$ and $i=1,\cdots,N$, hence we may apply Lemma \ref{Iterated Inequality}. Also note that in this case, there exists a positive integer $m_0$ such that $M_1:=\prod_{i=1}^{N}\lfloor\omega_1^{\alpha_i}\rfloor=2^{m_0}$ and $M_2:=\prod_{i=1}^{N}\lfloor\omega_2^{\alpha_i}\rfloor=3^{m_0}$.
Let $r$ be any integer larger than $r_1$, and $p$ and $q$ be two arbitrary positive integers. Applying Equation \eqref{L3E1} first with $\omega_1$ and $p$, and then repeating it with $\omega_2$ and $q$, we get
\begin{equation}\label{T2S1E1}
\frac{\Bigl(\mathbb{E} (Z^{\Phi_{r,p,q}})\Bigr)^{\frac{1}{\Phi_{r,p,q}}}}
{(\Phi_{r,p,q})^{\lambda}}
\geq
B_{\omega_2}(R)\,\bigl({B_{\omega_1}(r)}\bigr)^{\frac{1}{1+o_R}}
\Bigl(\frac{
\sqrt[r]{\mathbb{E}(
Z^{r})}
}{r^{\lambda}}\Bigr)^{\frac{1}{(1+o_r)(1+\bar{o}_R)}}.
\end{equation}
where $R:=r M_1^p+\sum_{k=1}^{p}M_1^k$, $o_r:=\frac{1}{r}\sum_{k=0}^{p-1}\frac{1}{M_1^k}$, $\bar{o}_R:=\frac{1}{R}\sum_{k=0}^{q-1}\frac{1}{M_2^k}$, and
$$
\Phi_{r,p,q}:=R M_2^q+\sum_{k=1}^{q}M_2^k=r 2^{m_0p} 3^{m_0q} (1+o_R)(1+o_r).
$$
We note that $B_{\omega_1}$ and $B_{\omega_2}$ converge to one uniformly in $p$ and $q$, and $o_r$ and $o_R$ converge to zero uniformly in $p$ and $q$.

\noindent
\textbf{Step 2: }
Choose $\varepsilon>0$ such that $(1+\varepsilon)<\frac{\ell_1}{\ell}$. 
Clearly, there exists $r_2>0$ such that for every $R, r\geq r_2$ we have
$$
(1+o_r)(1+\bar{o}_R)<1+\varepsilon\quad\text{, and}\quad B_{\omega_2}(R)\,\bigl({B_{\omega_1}(r)}\bigr)^{\frac{1}{1+o_R}}
\ell_2^{\frac{1}{(1+o_r)(1+o_R)}}>\ell_1.
$$
By the definition of $\limsup$,  there exists an integer $r>\max\{r_1,r_2\}$ such that $\frac{\sqrt[r]{\mathbb{E}(Z^{r})}}{r^{\lambda}}>\ell_2$. Now we apply this $r$ to Equation \eqref{T2S1E1}, along with any arbitrary integers $p$ and $q$. Noting that $R=r M_1^p+\sum_{k=1}^{p}M_2^k>r>r_2$, we have
\begin{equation}\label{T2S1E2}
\frac{\Bigl(\mathbb{E} (Z^{\Phi_{r,p,q}})\Bigr)^{\frac{1}{\Phi_{r,p,q}}}}
{(\Phi_{r,p,q})^{\lambda}}
\geq \ell_1 \;;\quad \forall p,q\in \mathbb{N}.
\end{equation}
We also have
\begin{equation}\label{T2S1E3}
r 2^{m_0p} 3^{m_0q}\leq \Phi_{r,p,q}=r 2^{m_0p} 3^{m_0q} (1+o_R)(1+o_r)\leq r 2^{m_0p} 3^{m_0q} (1+\varepsilon).
\end{equation}
As $\log_{2}3$ is not a rational number, by  Dirichlet's approximation theorem (Theorem \ref{Dirichlet}) there exist $p_0,q_0\in \mathbb{N}$ such that
\begin{equation}\label{T2S1E4}
0<|p_0-q_0\log_23|<\frac{1}{m_0} \log_2 (\frac{\ell_1}{\ell(1+\varepsilon)}).
\end{equation}
We proceed with the assumption that $p_0>q_0\log_23$; when $p_0<q_0\log_23$, the proof is similar.
So by Equation \eqref{T2S1E4} we have
\begin{equation}\label{T2S1E41}
1<\nu:=\frac{2^{m_0p_0}}{3^{m_0q_0}}<\frac{\ell_1}{\ell(1+\varepsilon)}.
\end{equation}
We choose $k_0\in \mathbb{N}$ such that $\nu^{k_0}>3$, and define $n_1:=r(1
+\varepsilon) 3^{m_0 q_0 k_0}$. Take any arbitrary integer $n\geq n_1$, and define the following
$$
q_1:=\max\{k; \; n\geq r (1+\varepsilon) 3^{m_0 k}\}\quad
\text{and}
\quad
k_1:=\max\{k; \; n\geq r (1+\varepsilon) 3^{m_0 q_1}\nu^{k}\}.
$$
As we have
$$
3^{m_0 q_1}\nu^{k_1}=3^{m_0 (q_1-k_1 q_0)} 2^{m_0 k_1 q_0},
$$
hence
\begin{equation}\label{T2S1E5}
r(1+\varepsilon)3^{m_0 (q_1-k_1 q_0)} 2^{m_0 k_1 q_0}\leq n <\nu r(1+\varepsilon) 3^{m_0 (q_1-k_1 q_0)} 2^{m_0 k_1 q_0}.
\end{equation}
We note that $q=q_1-k_1 q_0\geq 0$, because \\
(I) as $n>n_1$, by definition $q_1\geq q_0 k_0$, and \\
(II) $k_1\leq k_0$, otherwise if $k_0< k_1$, then $\nu^{k_1}>3$, and hence $n>r(1+\varepsilon)3^{m_0 (q_1+1)}$ which is in contradiction with the definition of $q_1$.\\
So we can apply Equation \eqref{T2S1E2} to $q=q_1-k_1 q_0$ and $p=k_1 q_0$. By Equations \eqref{T2S1E3} and \eqref{T2S1E5}, we get
\begin{equation}\label{T2S1E6}
\Phi_{r,p,q}\leq n \leq \nu (1+\varepsilon) \Phi_{r,p,q}.
\end{equation}

\noindent
\textbf{Step 3: }
As $\Phi_{r,p,q}\leq n$, by H\"older's inequality we have
$$
\Bigl(\mathbb{E} (Z^{\Phi_{r,p,q}})\Bigr)^{\frac{1}{\Phi_{r,p,q}}}\leq
\Bigl(\mathbb{E} (Z^{n})\Bigr)^{\frac{1}{n}}.
$$
Hence, by Equation \eqref{T2S1E6} we have
\begin{equation}\label{T2S3E1}
\bigl(\nu(1+\varepsilon)\bigr)^{-\lambda}\frac{\Bigl(\mathbb{E} (Z^{\Phi_{r,p,q}})\Bigr)^{\frac{1}{\Phi_{r,p,q}}}}
{(\Phi_{r,p,q})^{\lambda}}\leq
\frac{\Bigl(\mathbb{E} (Z^{n})\Bigr)^{\frac{1}{n}}}
{n^{\lambda}}.
\end{equation}
But by \eqref{T2S1E41}, we have
$$
\frac{\ell}{\ell_1}\leq
\bigl(\nu(1+\varepsilon)\bigr)^{-1}\leq \bigl(\nu(1+\varepsilon)\bigr)^{-\lambda}.
$$
So by Equations \eqref{T2S3E1} and \eqref{T2S1E2} we finally get
$$
\frac{\Bigl(\mathbb{E} (Z^{n})\Bigr)^{\frac{1}{n}}}
{n^{\lambda}}\geq \ell.
$$
This means that $\underline{\ell}$ is larger than or equal to $\ell$. As
this is true for any positive number $\ell$ that is strictly less than $\overline{\ell}$, this implies $\overline{\ell}=\underline{\ell}$; in other words the limit in \eqref{Theorem Limit Equation} exists.
\end{proof}

\begin{proof}[Proof of Theorem \ref{main theorem on local times}]
Lemma \ref{existence of moments limit} guarantees the convergence of $\{\frac{\sqrt[n]{\mathbb{E}(Z^n)}}{n^\lambda}\}_n$, so Theorem  \ref{Kasahara} can be applied if we show that the limit is strictly positive. This can indeed be easily verified applying Lemma \ref{Iterated Inequality} with some arbitrary $r>r_1$ and $\omega\in \mathbb{R}_+$ such that $\omega^{\alpha_i}$'s are all integers, and then letting $q$ converge to $+\infty$. Clearly the left-hand side of Equation \eqref{L3E1} converges to $\lim_{n\rightarrow +\infty}\frac{\sqrt[n]{\mathbb{E}(Z^n)}}{n^\lambda}$ whereas the right-hand side is strictly positive and independent of $q$.
\end{proof}

%

Now we can easily prove Corollary
\ref{limit theorem on intersection local times}.

\begin{proof}[Proof of Corollary \ref{limit theorem on intersection local times}]

We define the Gaussian field $\B{\Delta}_{\B{\tilde{t}}}$
$$
\B{\Delta}_{\B{\tilde{t}}}:=\bigl(\B{X}_1(\B{t_1})-\B{X}_2(\B{t_2}), \B{X}_2(\B{t_2})-\B{X}_3(\B{t_3}),\cdots, \B{X}_{m-1}(\B{t_{m-1}})-\B{X}_m(\B{t_m})\bigr),
$$
where $\B{\tilde{t}}:=(\B{\tilde{t}}_1, \cdots, \B{\tilde{t}}_n)$ and $\B{\tilde{t}}_1\in \mathcal{I}^{\tilde{N}_k}$ for every $k=1,\cdots,m$.

It is evident that $\B{\Delta}_{\B{\tilde{t}}}$ is a a centered Gaussian $(\tilde{N},\tilde{d})$-field, where $\tilde{d}:=(m-1)d$ and $\tilde{N}:=\sum_{k=1}^{m}N_k$.

The proof of the existence of the local time of $\B{\Delta}_{\B{\tilde{t}}}$ around $\B{0}$ over the cube $\mathcal{I}^{\tilde{N}}$ and the finiteness of all its moments, is similar to the proof of Theorem \ref{Upper bound for intersection local times}. Indeed, using Equation \eqref{upper bound on intersection kernel} with $q_i=\frac{m-1}{m}$ ($p_i=\frac{1}{m}$) for every $i=1,\cdots,m$, we obtain
$$
\U{K}_n^{\B{\Delta}}(\B{\tilde{t}}_1, \cdots, \B{\tilde{t}}_n)\leq
\prod_{k=1}^{m}\bigl(\U{K}_n^{\B{X}_k}(\B{t}_k^1, \cdots, \B{t}_k^n)\bigr)^{\frac{m-1}{m}}.
$$
So the Gaussian field $\B{\Delta}_{\B{\tilde{t}}}$ satisfies Property $\mathfrak{A}_1$.

%
%

As all the random fields $\B{X}_k$, $k=1,\cdots,m$, satisfy Properties $\mathfrak{A}_0$ and $\mathfrak{A}_2$, so does the Gaussian field $\B{\Delta}_{\B{\tilde{t}}}$. As every $\B{X}_k$ is diagonally self-similar (i.e., it satisfies Property $\mathfrak{A}_3$) with scaling vector $\B{\alpha}_k:=(\alpha_{k,1}, \cdots, \alpha_{k,N_k})\in \mathbb{R}_+^{N_k}$ and scaling matrix $\BU{H}\in\mathbb{R}^{d\times d}$, it can be easily verified that $\B{\Delta}_{\B{\tilde{t}}}$ is also diagonally self-similar with the scaling vector $\B{\tilde{\alpha}}\in\mathbb{R}_+^{\tilde{N}}$ constructed by adjoining all the vectors $\B{\alpha}_k$ together, i.e., $\B{\tilde{\alpha}}:=(\B{\alpha}_1, \B{\alpha}_2, \cdots, \B{\alpha}_m)$ and with the scaling matrix $\BU{\tilde{H}}\in\mathbb{R}^{(m-1)d\times(m-1)d}$ which is a block diagonal matrix containing $m-1$ copies of $\BU{H}$ on its main diagonal and zero elsewhere; in other words, $\BU{\tilde{H}}:=\textrm{diag}(\BU{H}, \BU{H}, \cdots, \BU{H})$. Clearly in this case we have $\textrm{tr}(\BU{\tilde{H}})=(m-1)\textrm{tr}\BU{H}$.
Now the desired conclusion is evident applying Theorem \ref{main theorem on local times}.

\end{proof}

\section{Appendix}
\begin{lem}\label{conditional variance formula}
Let $\mathcal{H}$ be a Gaussian Hilbert space, i.e., for any $n\in\mathbb{Z}^{+}$, and any elements $X_1, \cdots, X_n\in \mathcal{H}$, the set $\{X_i\}_{i=1}^n$ is a family of jointly Gaussian zero-mean random variables, and $\mathcal{H}$ forms a Hilbert space with respect to the inner product $\langle X,Y\rangle:=\mathbb{E}(X Y)$. Let $\mathcal{G}$ be a subspace of $\mathcal{H}$, and $X$ be an element of $\mathcal{H}$. Then we have
$$
\var(X\big|\mathcal{G})=\|Q_{\mathcal{G}}(X)\|^2,
$$
where $Q_{\mathcal{G}}(X):=X-P_{\mathcal{G}}(X)$, and $P_{\mathcal{G}}(X)$ is the orthogonal projection of $X$ over the subspace $\mathcal{G}$.
\end{lem}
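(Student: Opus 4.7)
The plan is to reduce the claim to two standard ingredients: the identification of conditional expectation with orthogonal projection in the Gaussian setting, and the fact that in a Gaussian Hilbert space orthogonality is equivalent to independence.

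First I would decompose $X = P_{\mathcal{G}}(X) + Q_{\mathcal{G}}(X)$, where (after passing to the closure of $\mathcal{G}$ in $\mathcal{H}$, which is harmless since $\sigma(\mathcal{G})=\sigma(\overline{\mathcal{G}})$) $P_{\mathcal{G}}(X)\in\overline{\mathcal{G}}$ and $Q_{\mathcal{G}}(X)\in\overline{\mathcal{G}}^{\perp}$. Fix any finite family $Y_1,\ldots,Y_n\in\mathcal{G}$. By the Gaussian Hilbert space hypothesis, the vector $(Q_{\mathcal{G}}(X),Y_1,\ldots,Y_n)$ is jointly centered Gaussian, and the covariances $\mathbb{E}(Q_{\mathcal{G}}(X)\,Y_i)=\langle Q_{\mathcal{G}}(X),Y_i\rangle$ vanish by definition of the orthogonal projection. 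For centered jointly Gaussian vectors, zero cross-covariance between two blocks is equivalent to their independence, so $Q_{\mathcal{G}}(X)$ is independent of $(Y_1,\ldots,Y_n)$. A routine monotone class (Dynkin $\pi$-$\lambda$) argument then promotes this to independence of $Q_{\mathcal{G}}(X)$ from the whole $\sigma$-algebra $\sigma(\mathcal{G})$.

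Next I would identify $\mathbb{E}(X\mid\sigma(\mathcal{G}))$ with $P_{\mathcal{G}}(X)$: the projection is $\sigma(\mathcal{G})$-measurable, while $Q_{\mathcal{G}}(X)$, being independent of $\sigma(\mathcal{G})$ and zero-mean, satisfies $\mathbb{E}(Q_{\mathcal{G}}(X)\mid\sigma(\mathcal{G}))=\mathbb{E}(Q_{\mathcal{G}}(X))=0$. Therefore
$$
\var(X\mid\mathcal{G})
=\mathbb{E}\bigl((X-P_{\mathcal{G}}(X))^2\bigm|\sigma(\mathcal{G})\bigr)
=\mathbb{E}\bigl(Q_{\mathcal{G}}(X)^2\bigm|\sigma(\mathcal{G})\bigr)
=\mathbb{E}\bigl(Q_{\mathcal{G}}(X)^2\bigr)
=\|Q_{\mathcal{G}}(X)\|^2,
$$
where the second-to-last equality uses independence once more.

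The only genuinely non-cosmetic step is the Gaussian \emph{orthogonality implies independence} principle invoked in the first paragraph; everything else is bookkeeping around the $L^2$ definition of conditional expectation. The mild technical point is making sure joint Gaussianity is preserved when we pass from $\mathcal{G}$ to $\overline{\mathcal{G}}$ (which follows because $L^2$-limits of jointly Gaussian families remain jointly Gaussian), so that $P_{\mathcal{G}}(X)$ and $Q_{\mathcal{G}}(X)$ themselves lie in $\mathcal{H}$ and the orthogonality computation is legitimate.
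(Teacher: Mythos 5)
Your proposal is correct and takes essentially the same route as the paper: decompose $X = P_{\mathcal{G}}(X) + Q_{\mathcal{G}}(X)$, use the Gaussian orthogonality-implies-independence principle to see that $Q_{\mathcal{G}}(X)$ is independent of $\mathcal{G}$, and identify the conditional expectation with the projection. The paper's proof states exactly this in two sentences; you have merely filled in the standard details (closure of $\mathcal{G}$, monotone class argument) that the paper leaves implicit.
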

\begin{proof}
By definition, we have
$$
\var(X\big|\mathcal{G})=
\mathbb{E}\bigl[\bigl(X-\mathbb{E}(X\big|\mathcal{G})\bigr)^2\big |\mathcal{G}\bigl].
$$
Replacing $X$ by $P_{\mathcal{G}}(X)+Q_{\mathcal{G}}(X)$ on the right-hand side of the above equation, and noting that $Q_{\mathcal{G}}(X)$ is independent of $\mathcal{G}$, we can easily derive the desired result.
\end{proof}
\begin{cor}\label{conditionging decreases variance}
An immediate implication of the previous lemma is the following inequality
$$
\var(X\big|\mathcal{G})\leq \var(X).
$$
\end{cor}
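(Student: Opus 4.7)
The plan is to derive the inequality directly from Lemma~\ref{conditional variance formula} together with the Pythagorean identity in the Gaussian Hilbert space $\mathcal{H}$. First, I would invoke the lemma to rewrite $\var(X\big|\mathcal{G}) = \|Q_{\mathcal{G}}(X)\|^2$, where $Q_{\mathcal{G}}(X) = X - P_{\mathcal{G}}(X)$ is the orthogonal complement of the projection of $X$ onto $\mathcal{G}$.

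Next, since $X$ is a zero-mean element of the Gaussian Hilbert space, the unconditional variance is simply $\var(X) = \mathbb{E}(X^2) = \|X\|^2$ under the inner product $\langle \cdot,\cdot\rangle$ of $\mathcal{H}$. Decomposing $X = P_{\mathcal{G}}(X) + Q_{\mathcal{G}}(X)$ and noting that the two summands are orthogonal by the defining property of the orthogonal projection, the Pythagorean theorem yields
$$
\|X\|^2 = \|P_{\mathcal{G}}(X)\|^2 + \|Q_{\mathcal{G}}(X)\|^2 \geq \|Q_{\mathcal{G}}(X)\|^2.
$$
Combining this with the lemma gives $\var(X\big|\mathcal{G}) \leq \var(X)$, as required.

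There is really no main obstacle here; the corollary is a one-line consequence of the preceding lemma plus the Pythagorean identity. The only point worth stating explicitly is that the orthogonality of $P_{\mathcal{G}}(X)$ and $Q_{\mathcal{G}}(X)$ is built into the definition of orthogonal projection in a Hilbert space, so no further justification is needed.
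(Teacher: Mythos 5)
Your argument is correct and is exactly the intended one: the paper gives no separate proof, treating the inequality as immediate from Lemma~\ref{conditional variance formula} via $\var(X\big|\mathcal{G})=\|Q_{\mathcal{G}}(X)\|^2\leq\|X\|^2=\var(X)$, which is precisely the Pythagorean argument you spelled out.
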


\begin{lem}\label{Ineuqlity for conditional variance}
Let $\mathcal{Y}$ be an arbitrary inner-product space. Then for any $\B{y}_1,\cdots,\B{y}_n \in \mathcal{Y}$, $n\in\mathbb{N}$, we have
$$
\det
\begin{bmatrix}
\B{y}_1\\
\vdots\\
\B{y}_n
\end{bmatrix}
\begin{bmatrix}
\B{y}_1&\cdots&\B{y}_n
\end{bmatrix}=\|\B{y}_1\|^2
\prod_{k=2}^{n} \|Q_{\langle \B{y}_1, \cdots, \B{y}_{k-1}\rangle}(\B{y}_k)\|^2,
$$
where $\langle \B{y}_1, \cdots, \B{y}_{k-1}\rangle$ is the subspace generated by $\{\B{y}_1, \cdots, \B{y}_{k-1}\}$.
\end{lem}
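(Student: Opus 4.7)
The identity being claimed is the classical Gram determinant formula: the determinant of the Gram matrix $G$ with entries $G_{ij} = \langle \B{y}_i, \B{y}_j\rangle$ equals the product of squared norms of the Gram--Schmidt residuals. My plan is to prove it by Gram--Schmidt orthogonalization followed by a triangular matrix factorization of $G$.

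First I would reduce to a finite-dimensional setting by working inside the subspace $\mathcal{V} := \langle \B{y}_1, \cdots, \B{y}_n\rangle \subseteq \mathcal{Y}$, which is at most $n$-dimensional, and choosing an orthonormal basis of (an enlargement to dimension $n$ of) $\mathcal{V}$. This lets me identify each $\B{y}_i$ with a row vector in $\mathbb{R}^n$ so that the matrix product on the left-hand side is literally the Gram matrix $G = YY^\dagger$, where $Y$ is the $n\times n$ matrix whose $i$-th row is the coordinate vector of $\B{y}_i$. Then I would define the Gram--Schmidt residuals $\B{e}_1 := \B{y}_1$ and $\B{e}_k := Q_{\langle \B{y}_1, \cdots, \B{y}_{k-1}\rangle}(\B{y}_k)$ for $k \geq 2$, so that $\{\B{e}_k\}_{k=1}^n$ is an orthogonal family and, for each $k$, there exist scalars $\ell_{k,1}, \cdots, \ell_{k,k-1}$ such that
$$
\B{y}_k = \B{e}_k + \sum_{j=1}^{k-1}\ell_{k,j}\,\B{e}_j.
$$

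Next I would encode this as a matrix factorization. Let $E$ be the matrix whose $k$-th row is the coordinate vector of $\B{e}_k$, and let $L$ be the lower triangular matrix with $L_{k,k} = 1$ and $L_{k,j} = \ell_{k,j}$ for $j < k$. The expansion above gives $Y = L\,E$. Since the rows of $E$ are mutually orthogonal, $EE^\dagger$ is the diagonal matrix with entries $\|\B{e}_k\|^2$. Therefore
$$
\det\bigl(YY^\dagger\bigr) = \det(L)\,\det(EE^\dagger)\,\det(L^\dagger) = 1\cdot \prod_{k=1}^{n}\|\B{e}_k\|^2 \cdot 1,
$$
which, recalling that $\B{e}_1 = \B{y}_1$ and $\B{e}_k = Q_{\langle \B{y}_1, \cdots, \B{y}_{k-1}\rangle}(\B{y}_k)$ for $k \geq 2$, is exactly the right-hand side of the claimed identity.

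There is no real obstacle here; the only mild subtlety is the degenerate case in which the $\B{y}_i$'s are linearly dependent, so that $\B{e}_k = 0$ for some $k$. In that case both sides vanish: the right-hand side trivially, and the left-hand side because $Y$ has a nontrivial row-dependence and hence $\det(YY^\dagger) = 0$. So the identity holds in full generality.
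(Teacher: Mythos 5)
Your proof is correct and follows essentially the same route as the paper's: both arguments are Gram--Schmidt orthogonalization combined with a triangularity observation, yours packaged as the factorization $Y=LE$ with $\det L=1$, the paper's as an isometric identification with $\mathbb{R}^n$ under which the coordinate matrix of the $\B{y}_i$'s is triangular with the residual norms on the diagonal. The treatment of the linearly dependent case (both sides vanish) also matches the paper's.
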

\begin{proof}
We assume that $\{\B{y}_i\}_{i=1}^n$ are linearly independent, because otherwise, the equality is trivially true. By orthogonal decomposition of each $\B{y}_k$ over the subspace $\langle \B{y}_1, \cdots, \B{y}_{k-1}\rangle$, we can obtain the sequences $\{\B{f}_i\}_{i=1}^n$,  $\{\eta_i\}_{i=1}^n$, and $\{\B{p}_i\}_{i=1}^n$ such that for $\forall k=1, \cdots, n$, we have $\B{y}_k=\eta_k \B{f}_k+\B{p}_k$ where $\eta_k\in \mathbb{R}_+$, $\B{p}_k\in \langle \B{y}_1, \cdots, \B{y}_{k-1}\rangle$, $\B{f}_k\perp \langle \B{y}_1, \cdots, \B{y}_{k-1}\rangle$, and ${\|\B{f}_i\|=1}$. In fact for each $k=2, \cdots, n$, $\B{p}_k=P_{\langle \B{y}_1, \cdots, \B{y}_{k-1}\rangle}(\B{y}_k)$ and $\eta_k \B{f}_k=Q_{\langle \B{y}_1, \cdots, \B{y}_{k-1}\rangle}(\B{y}_k)$.
Let $f:\mathbb{R}^n\rightarrow \mathcal{Y}$ be the linear isometry such that $f(\B{e}_i)=\B{f}_i$ for every $i=1, \cdots, n$, where $\{\B{e}_i\}_{i=1}^n$ is the standard basis for the Euclidean space $\mathbb{R}^n$, i.e., $\B{e}_i$ is the \underline{column vector} that is $1$ in the $i$-th entry and $0$ elsewhere.
For each $i=1, \cdots,n$, define $\B{x}_i\in\mathbb{R}^n$ as the inverse image of $\B{y}_i$, i.e., $f(\B{x}_i)=\B{y}_i$. As $f$ is an isometry, we have
$$
\det
\begin{bmatrix}
\B{y}_1\\
\vdots\\
\B{y}_n
\end{bmatrix}
\begin{bmatrix}
\B{y}_1&\cdots&\B{y}_n
\end{bmatrix}=
\det
\begin{bmatrix}
\B{x}_1^T\\
\vdots\\
\B{x}_n^T
\end{bmatrix}
\begin{bmatrix}
\B{x}_1&\cdots&\B{x}_n
\end{bmatrix}=
\Bigl(\det
\begin{bmatrix}
\B{x}_1&\cdots&\B{x}_n
\end{bmatrix}\Bigr)^2.
$$
Again due to the fact that $f$ is an isometry, for every $k$ we have $\B{x}_k\in\langle \B{e}_1, \cdots, \B{e}_{k} \rangle$, i.e., the matrix $\begin{bmatrix} \B{x}_1&\cdots&\B{x}_n \end{bmatrix}$ is upper triangular with $\eta_i$'s on its diagonal. So we have
$$
\det
\begin{bmatrix}
\B{x}_1&\cdots&\B{x}_n
\end{bmatrix}
=\prod_{i=1}^n \eta_i.
$$
But $\eta_1=\|\B{y}_1\|$, and $\eta_i=\|Q_{\langle \B{y}_1, \cdots, \B{y}_{i-1}\rangle}(\B{y}_i)\|$ for every $i=2, \cdots, n$. So the proof is complete.
\end{proof}

\begin{cor}\label{detCov formula}
Suppose $\{X_i\}_{i=1}^{n}$ is a family of jointly Gaussian random variables. Using Lemma \ref{conditional variance formula}, we obtain the following formula
$$
\det \cov (X_1, \cdots, X_n)=\var(X_1) \prod_{k=2}^{n} \var(X_k\big| X_1, \cdots, X_{k-1})
$$
\end{cor}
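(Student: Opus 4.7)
The plan is to apply the two preceding results, Lemma \ref{Ineuqlity for conditional variance} and Lemma \ref{conditional variance formula}, essentially by direct substitution. First I would reduce to the mean-zero case by replacing each $X_i$ with $X_i - \mathbb{E}(X_i)$; this leaves both $\det \cov(X_1, \ldots, X_n)$ and each conditional variance $\var(X_k \mid X_1, \ldots, X_{k-1})$ unchanged, since covariances and conditional variances of jointly Gaussian variables are translation-invariant.

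Next I would view the centered $X_i$'s as elements of the Gaussian Hilbert space $\mathcal{H}$ generated by them, equipped with the inner product $\langle X, Y \rangle := \mathbb{E}(XY)$. With this identification, the matrix appearing in Lemma \ref{Ineuqlity for conditional variance},
$$
\begin{bmatrix} X_1 \\ \vdots \\ X_n \end{bmatrix}
\begin{bmatrix} X_1 & \cdots & X_n \end{bmatrix},
$$
is precisely the covariance matrix $\cov(X_1, \ldots, X_n)$, since its $(i,j)$-entry is $\mathbb{E}(X_i X_j)$. Applying Lemma \ref{Ineuqlity for conditional variance} with $\B{y}_k = X_k$ therefore yields
$$
\det \cov(X_1, \ldots, X_n) = \|X_1\|^2 \prod_{k=2}^{n} \|Q_{\langle X_1, \ldots, X_{k-1}\rangle}(X_k)\|^2.
$$

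Finally, I would identify each factor on the right. For the first factor, $\|X_1\|^2 = \mathbb{E}(X_1^2) = \var(X_1)$ by centering. For each $k \geq 2$, Lemma \ref{conditional variance formula} applied with $\mathcal{G} = \langle X_1, \ldots, X_{k-1}\rangle$ gives
$$
\|Q_{\langle X_1, \ldots, X_{k-1}\rangle}(X_k)\|^2 = \var(X_k \mid X_1, \ldots, X_{k-1}),
$$
where on the right the conditioning on the subspace $\mathcal{G}$ coincides with conditioning on the random variables $X_1, \ldots, X_{k-1}$ because joint Gaussianity makes the conditional law depend only on the span. Substituting these identifications gives the claimed product formula. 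There is no real obstacle here; the only subtlety worth flagging is the centering step and the observation that, in the jointly Gaussian setting, conditioning on $\{X_1, \ldots, X_{k-1}\}$ and projecting onto their linear span produce the same residual variance.
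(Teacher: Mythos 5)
Your proof is correct and follows essentially the same route the paper intends: apply Lemma \ref{Ineuqlity for conditional variance} with $\B{y}_k=X_k$ in the Gaussian Hilbert space so the Gram matrix becomes the covariance matrix, then use Lemma \ref{conditional variance formula} to identify each $\|Q_{\langle X_1,\cdots,X_{k-1}\rangle}(X_k)\|^2$ with $\var(X_k\mid X_1,\cdots,X_{k-1})$. The centering remark and the identification of conditioning with projection onto the linear span are exactly the (implicit) steps the paper relies on, so nothing is missing.
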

Using Lemma \ref{Ineuqlity for conditional variance} we can easily verify the following two propositions.
\begin{prop}\label{Inequality for detCov}
Suppose that $\{Y_i^j;\; i=1, \cdots,n, j=1,\cdots, m_i\}$ is family of jointly Gaussian random variables, where $m_i\in\mathbb{N}$ for every $i=1, \cdots n$. Also, for each $i=1, \cdots n$, let $\B{Y}_i:=(Y_i^1,\cdots, Y_i^{m_i})$. Then we have
$$
\begin{aligned}
\detcov(\B{Y}_1, \cdots, \B{Y}_n)
=\detcov(\B{Y}_1) \prod_{k=2}^{n} \detcov(\B{Y}_k\big| \B{Y}_1, \cdots, \B{Y}_{k-1})
\leq \prod_{i=1}^{n} \detcov(\B{Y}_i),
\end{aligned}
$$
where $\detcov(\B{Y}_{k}\big| \B{Y}_1, \cdots, \B{Y}_{k-1})$ is the determinant of the conditional covariance matrix of $\B{Y}_{k}$ conditioned on the random vectors $\B{Y}_1$, ..., $\B{Y}_{k-1}$.
\end{prop}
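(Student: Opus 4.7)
The plan is to derive both the equality and the inequality by two applications of Corollary \ref{detCov formula}, matched with one application of Corollary \ref{conditionging decreases variance}.

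First I would list the scalar components in lexicographic order, $Y_1^1, \ldots, Y_1^{m_1}, Y_2^1, \ldots, Y_n^{m_n}$, and apply Corollary \ref{detCov formula} to this full listing. This gives
$$\detcov(\B{Y}_1, \ldots, \B{Y}_n) = \prod_{k=1}^n \prod_{j=1}^{m_k} \var\bigl(Y_k^j \,\big|\, \B{Y}_1, \ldots, \B{Y}_{k-1}, Y_k^1, \ldots, Y_k^{j-1}\bigr).$$
Next, for each fixed $k$, I would apply Corollary \ref{detCov formula} a second time, now to the jointly Gaussian vector $\B{Y}_k$ after conditioning on $(\B{Y}_1, \ldots, \B{Y}_{k-1})$. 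The conditional distribution remains jointly Gaussian, with covariance given by the usual Schur complement, so Corollary \ref{detCov formula} transports verbatim to the conditional Gaussian Hilbert space, yielding
$$\detcov(\B{Y}_k \mid \B{Y}_1, \ldots, \B{Y}_{k-1}) = \prod_{j=1}^{m_k} \var\bigl(Y_k^j \,\big|\, \B{Y}_1, \ldots, \B{Y}_{k-1}, Y_k^1, \ldots, Y_k^{j-1}\bigr).$$
Matching the inner products for each fixed $k$ and multiplying over $k$ furnishes the equality part of the proposition.

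For the inequality, I would invoke Corollary \ref{conditionging decreases variance} factor by factor, giving
$$\var\bigl(Y_k^j \,\big|\, \B{Y}_1, \ldots, \B{Y}_{k-1}, Y_k^1, \ldots, Y_k^{j-1}\bigr) \leq \var\bigl(Y_k^j \,\big|\, Y_k^1, \ldots, Y_k^{j-1}\bigr),$$
then collapse the right-hand sides by yet another application of Corollary \ref{detCov formula} (this time to the unconditioned block $\B{Y}_k$) to obtain $\detcov(\B{Y}_k \mid \B{Y}_1, \ldots, \B{Y}_{k-1}) \leq \detcov(\B{Y}_k)$. Taking the product over $k$ closes the argument.

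The only technical nuance is the legitimacy of reapplying Corollary \ref{detCov formula} in the conditional Gaussian setting, i.e.\ that a jointly Gaussian family conditioned on a Gaussian sub-vector remains jointly Gaussian with the standard Schur-complement covariance; this is classical and presents no real obstacle. Once accepted, the remainder of the proof is a clean regrouping of factors along the lexicographic listing of the double index $(k,j)$.
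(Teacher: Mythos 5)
Your argument is correct and follows essentially the same route the paper intends: factor $\detcov$ of the adjoined vector into cumulative conditional variances (the Gram-determinant identity behind Corollary \ref{detCov formula} and Lemma \ref{Ineuqlity for conditional variance}), regroup the factors blockwise to get the stated equality, and use that conditioning reduces variance to get the inequality. The only cosmetic point is that the comparison $\var\bigl(Y_k^j\mid \B{Y}_1,\cdots,\B{Y}_{k-1},Y_k^1,\cdots,Y_k^{j-1}\bigr)\leq\var\bigl(Y_k^j\mid Y_k^1,\cdots,Y_k^{j-1}\bigr)$ requires monotonicity of conditional variance in the conditioning family, which follows immediately from Lemma \ref{conditional variance formula} (the residual norm decreases as the subspace grows) rather than from the literal statement of Corollary \ref{conditionging decreases variance}.
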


\begin{prop}\label{Reduction Inequality for detCov}
Let $m\in\mathbb{N}$, and consider any family of $n$ jointly Gaussian random vectors of size $m$, i.e.,
$\BU{Y}_i:=(Y_i^1,\cdots, Y_i^{m})$ for every $i$. Then we have
$$
\begin{aligned}
&\detcov(\BU{Y}_1, \cdots, \BU{Y}_{n})\\
&\quad\quad \leq \detcov(\BU{Y}_{k}) \detcov(\BU{Y}_1-\BU{Y}_{k}, \cdots, \BU{Y}_{k-1}-\BU{Y}_{k}, \BU{Y}_{k+1}-\BU{Y}_{k}, \cdots, \BU{Y}_{n}-\BU{Y}_{k})
\end{aligned}
$$
\end{prop}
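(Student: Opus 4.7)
The plan is to reduce Proposition \ref{Reduction Inequality for detCov} to Proposition \ref{Inequality for detCov} via an invariance argument. The essential observation is that $\detcov$ of a jointly Gaussian vector is unchanged when one applies any linear transformation of determinant $\pm 1$ to its constituents, because if $\B{W} = T \B{Z}$ then $\cov(\B{W}) = T\cov(\B{Z})T^\dagger$ and hence $\det\cov(\B{W}) = \det(T)^2 \det\cov(\B{Z})$.

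First I would note two instances of such invariance. (i) \emph{Permutation invariance}: reordering the blocks $\BU{Y}_1, \dots, \BU{Y}_n$ corresponds to a permutation matrix, which has determinant $\pm 1$; this is the fact already used in the proof of Theorem \ref{Upper bound for intersection local times}. (ii) \emph{Shear invariance}: the linear map sending
$(\BU{Y}_1, \dots, \BU{Y}_n)$ to $(\BU{Y}_1 - \BU{Y}_k, \dots, \BU{Y}_{k-1} - \BU{Y}_k, \BU{Y}_k, \BU{Y}_{k+1} - \BU{Y}_k, \dots, \BU{Y}_n - \BU{Y}_k)$ is given by a block matrix that is (after an obvious permutation) block triangular with identity blocks on the diagonal, hence of determinant $1$.

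Combining these, I obtain
$$
\detcov(\BU{Y}_1, \dots, \BU{Y}_n) = \detcov\bigl(\BU{Y}_k,\, \BU{Y}_1 - \BU{Y}_k,\, \dots,\, \BU{Y}_{k-1} - \BU{Y}_k,\, \BU{Y}_{k+1} - \BU{Y}_k,\, \dots,\, \BU{Y}_n - \BU{Y}_k\bigr).
$$
I would then apply Proposition \ref{Inequality for detCov} to the right-hand side, grouping it as two Gaussian vectors: the single block $\BU{Y}_k$ on one side, and the adjoined vector $(\BU{Y}_1 - \BU{Y}_k, \dots, \BU{Y}_{k-1} - \BU{Y}_k, \BU{Y}_{k+1} - \BU{Y}_k, \dots, \BU{Y}_n - \BU{Y}_k)$ on the other. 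The conclusion of Proposition \ref{Inequality for detCov} gives precisely the desired inequality.

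There is no serious obstacle: the argument is essentially bookkeeping of two elementary determinant facts combined with a single invocation of the already-established Proposition \ref{Inequality for detCov}. The only point to be careful about is making the shear transformation explicit (for instance, writing it as a block matrix and confirming its determinant is $1$), but this is routine linear algebra and requires no further probabilistic input.
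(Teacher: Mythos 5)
Your argument is correct, and it is essentially the route the paper intends: the paper leaves this proposition as an easy consequence of Lemma \ref{Ineuqlity for conditional variance}, and it itself invokes exactly your two invariance facts (permutation invariance and invariance under adding linear combinations of the other blocks) in the proof of Theorem \ref{Upper bound for intersection local times}. Combining the unimodular change of variables with the two-block case of Proposition \ref{Inequality for detCov}, i.e. $\detcov(\BU{A},\BU{B})\leq\detcov(\BU{A})\,\detcov(\BU{B})$, gives precisely the stated inequality, so there is nothing to add.
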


We have the following lemma which can be proved by elementary probability and then Stirling's approximation, i.e., the fact that
$\kappa_1\leq \frac{n!}{(\frac{n}{e})^n \sqrt{n}}\leq \kappa_2$, where $\kappa_1$ and $\kappa_2$ are strictly positive global constants.
\begin{lem}\label{Partitioning a set}
Let $n,m\in\mathbb{N}$, and suppose that we have $nm$ distinct balls and $n$ distinct baskets. Let $\mathfrak{N}^{nm}_{n}$ be the number of different ways one can distribute the balls among the baskets such that each basket contains exactly $m$ balls. In other words, $\mathfrak{N}^{nm}_{n}$ is the cardinality of $\Omega^{mn}_{n}$ where $\Omega^{mn}_{n}$ is the set of all functions $\sigma:\{1, 2, \cdots, mn\}\rightarrow \{1, 2, \cdots, n\}$ such that for every $p\in\{1, 2, \cdots, n\}$, the cardinality of the inverse image of $\sigma$ equals $m+1$, i.e., $|\sigma^{-1}(\B{p})|=m$. We have
$$
\mathfrak{N}^{nm}_{n}=\frac{(nm)!}{(m!)^n}
\geq
\frac{}{} \frac{\kappa_1 \sqrt{n}}{\kappa_2^n \sqrt{m^n}} n^{nm},
$$
where $\kappa_1$ and $\kappa_2$ are strictly positive global constants.
\end{lem}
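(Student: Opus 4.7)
The plan has two stages: first establish the exact count $\mathfrak{N}^{nm}_{n}=\frac{(nm)!}{(m!)^n}$ by an elementary combinatorial argument, and then derive the stated lower bound via Stirling's approximation.

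For the exact count, I would argue as follows. Label the baskets $1,2,\ldots,n$. Choose the $m$ balls going into basket $1$ in $\binom{nm}{m}$ ways, then from the remaining $nm-m$ balls choose $m$ for basket $2$ in $\binom{nm-m}{m}$ ways, and so on. Multiplying the binomial coefficients telescopes to $\frac{(nm)!}{(m!)^n}$, which is the standard multinomial coefficient $\binom{nm}{m,m,\ldots,m}$. Equivalently, one may count ordered sequences of all $nm$ balls ($(nm)!$ ways), then quotient by the $(m!)^n$ internal orderings within each of the $n$ blocks of length $m$.

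For the inequality, I would apply the two-sided Stirling bound stated just before the lemma, namely $\kappa_1\le\frac{k!}{(k/e)^k\sqrt{k}}\le\kappa_2$, separately to numerator and denominator. This gives
$$
(nm)!\;\ge\;\kappa_1\,\Bigl(\frac{nm}{e}\Bigr)^{nm}\sqrt{nm},\qquad
(m!)^n\;\le\;\kappa_2^{\,n}\Bigl(\frac{m}{e}\Bigr)^{nm}m^{n/2}.
$$
Dividing, the factor $e^{-nm}$ cancels and $(nm)^{nm}/m^{nm}=n^{nm}$, so
$$
\mathfrak{N}^{nm}_{n}\;\ge\;\frac{\kappa_1\,n^{nm}\sqrt{nm}}{\kappa_2^{\,n}\,m^{n/2}}
\;=\;\frac{\kappa_1\sqrt{n}\sqrt{m}}{\kappa_2^{\,n}\sqrt{m^{n}}}\,n^{nm}.
$$
Since $m\ge 1$ we may drop the factor $\sqrt{m}\ge 1$ and rename constants to obtain the claimed bound $\mathfrak{N}^{nm}_{n}\ge\frac{\kappa_1\sqrt{n}}{\kappa_2^{\,n}\sqrt{m^n}}\,n^{nm}$.

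There is no real obstacle here: the identity is standard combinatorics and the inequality is bookkeeping with Stirling. The only mild point to be careful about is tracking the $\sqrt{n}$ versus $\sqrt{nm}$ in the numerator and the $m^{n/2}$ in the denominator, and noting that the two-sided Stirling estimate must be used in opposite directions for the numerator and denominator so that the constants line up with $\kappa_1$ and $\kappa_2^n$ as written in the statement.
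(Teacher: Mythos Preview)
Your proposal is correct and follows exactly the approach the paper indicates: the paper does not give a detailed proof but simply notes that the lemma ``can be proved by elementary probability and then Stirling's approximation,'' which is precisely what you carry out. Your bookkeeping of the $\sqrt{nm}$ and $m^{n/2}$ factors is accurate, and dropping the extra $\sqrt{m}\ge 1$ to match the stated form is harmless.
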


The following theorem, also known as Dirichlet's theorem on Diophantine approximation, is a direct application of Pigeonhole Principle which itself was first used by Dirichlet \cite{Dirichlet1863}. For completeness we provide the proof.
\begin{thm}[Dirichlet's approximation theorem]\label{Dirichlet}
For any real number $\alpha$ and any positive integer $n$, there exist integers $p$ and $q$ such that $1\leq q \leq n$ and
$
|p-q \alpha|\leq \frac{1}{n}
$
\end{thm}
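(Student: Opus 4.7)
The plan is to follow the hint in the paper and set up a Pigeonhole argument. I would consider the $n+1$ real numbers $\{k\alpha\}$ for $k=0,1,\ldots,n$, where $\{x\}:=x-\lfloor x\rfloor$ denotes the fractional part of $x$, all of which lie in the half-open interval $[0,1)$.

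Next I would partition $[0,1)$ into the $n$ disjoint subintervals $[j/n,(j+1)/n)$ for $j=0,1,\ldots,n-1$, each of length $1/n$. Since $n+1$ points are distributed among $n$ boxes, the Pigeonhole Principle guarantees that at least two of the fractional parts, say $\{i\alpha\}$ and $\{j\alpha\}$ with $0\leq i<j\leq n$, must lie in the same subinterval, and therefore $|\{j\alpha\}-\{i\alpha\}|<1/n$.

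Finally I would unwind the fractional parts: writing $\{k\alpha\}=k\alpha-\lfloor k\alpha\rfloor$ and setting $q:=j-i$ and $p:=\lfloor j\alpha\rfloor-\lfloor i\alpha\rfloor$, one obtains $1\leq q\leq n$ and $|q\alpha-p|<1/n$, which is in fact slightly stronger than the claimed bound $|p-q\alpha|\leq 1/n$. There is no real technical obstacle here; the entire content of the proof is the correct choice of the $n+1$ points and $n$ boxes, plus noting that $q\geq 1$ is automatic from $i<j$ (this is the only place where using $n+1$ rather than $n$ fractional parts, i.e., including $k=0$, actually matters).
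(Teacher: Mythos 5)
Your proof is correct and is essentially the same pigeonhole argument as the paper's: fractional parts of multiples of $\alpha$ distributed among the $n$ intervals $[j/n,(j+1)/n)$. The only cosmetic difference is that you include $k=0$ and apply the Pigeonhole Principle once to $n+1$ points, whereas the paper uses only $k=1,\dots,n$ and splits into two cases (either some point lands in $[0,1/n)$, or $n$ points occupy the remaining $n-1$ boxes); the two bookkeepings are interchangeable.
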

\begin{proof}
Consider the numbers $\alpha-\lfloor\alpha\rfloor$, $2\alpha-\lfloor2\alpha\rfloor$, ..., $n\alpha-\lfloor n\alpha\rfloor$, and the intervals $[\frac{i}{n},\frac{i+1}{n})$, for $i=0,\cdots,n-1$. Either one of the numbers falls into the first interval  $[0,\frac{1}{n})$, or otherwise there will an interval that contains more than one point. In either case we can find the desired $p$ and $q$.
\end{proof}


\bibliographystyle{plain}
\def\cprime{$'$}

\end{document}